\documentclass[reqno]{amsart}

\textwidth 13.5cm
\oddsidemargin 1.3cm
\evensidemargin 1.3cm

\usepackage[utf8]{inputenc}
\usepackage{amsmath}
\numberwithin{equation}{section}
\numberwithin{figure}{section}
\numberwithin{table}{section}
\usepackage{graphicx}
\usepackage{amssymb}
\usepackage{mathrsfs}
\usepackage{amsfonts}
\usepackage[colorlinks=true, allcolors=blue]{hyperref}
\usepackage{subfigure} 
\usepackage{color}
\usepackage{extarrows}
\usepackage{booktabs}
\usepackage{hyperref}
\usepackage{tikz}
\usepackage{extarrows}
\usepackage{booktabs}
\usepackage{amsthm}
\usepackage{enumerate}
\usepackage{enumitem} 
\usepackage{indentfirst}
\usepackage{embrac}
\usepackage{cite} 
\usetikzlibrary{positioning,shapes}




\def\bR{{\mathbb R}}

\def\sE{{\mathscr E}}
\def\sF{{\mathscr F}}

\def\bs{\mathbf{s}}

\def\bN{\mathbb{N}}

\def\fm{\mathfrak{m}}

\def\${|\!|\!|}
\def\l|{\left|\!\left|\!\left|}
\def\r|{\right|\!\right|\!\right|}


\newtheorem{theorem}{Theorem}[section]
\newtheorem{lemma}[theorem]{Lemma}

\newtheorem{corollary}[theorem]{Corollary}
\theoremstyle{definition}
\newtheorem{definition}[theorem]{Definition}

\theoremstyle{remark}
\newtheorem{remark}[theorem]{Remark}
\numberwithin{equation}{section}

\setcounter{tocdepth}{1}

\begin{document}

\title[Generalization of quasidiffusions]{On generalization of quasidiffusions}

\author{Liping Li}
\address{Fudan University, Shanghai, China.  }
\address{Bielefeld University,  Bielefeld, Germany.}
\email{liliping@fudan.edu.cn}
\thanks{The author is a member of LMNS,  Fudan University.  He is also partially supported by NSFC (No.  11931004) and Alexander von Humboldt Foundation in Germany. }


\subjclass[2010]{Primary 31C25, 60J35,  60J45.}



\keywords{Quasidiffusions,  Skip-free property,  Dirichlet forms,  Strongly local-like property,  Semimartingale decomposition,  Local times}

\begin{abstract}
A quasidiffusion is by definition a time-changed Brownian motion on certain closed subset of $\bR$.  The aim of this paper is two-fold.  On one hand,  we will put forward a generation of quasidiffusion,  called skip-free Hunt process,  by way of a pathwise setup.  As an analogue of regular diffusion on an interval,  a skip-free Hunt process also admits a so-called scale function and a so-called speed measure.  In addition,  it is uniquely determined by scale function and speed measure.  Particularly,  a skip-free Hunt process is a quasidiffusion,  if and only if it is on the natural scale.  On the other hand,  we will give an analytic characterization of skip-free Hunt processes by means of Dirichlet forms.  The main result shows that a so-called strongly local-like property for associated Dirichlet forms is equivalent to the skip-free property of sample paths of skip-free Hunt processes.  As a byproduct,  the explicit expression of Dirichlet forms associated to skip-free Hunt processes will be formulated.  

\end{abstract}

\maketitle
\tableofcontents

\section{Introduction}

As one of the most important stochastic models,  a one-dimensional diffusion process means a continuous strong Markov process $X=(X_t)_{t\geq 0}$ on an interval $I=\langle l, r\rangle$ where $l$ or $r$ may or may not be contained in $I$; see,  e.g., \cite{I06,  IM74,  M68}.  It is called \emph{regular} if $\mathbf{P}_x(T_y<\infty)>0$ for any $x\in \mathring{I}:=(l,r)$ and $y\in I$,  where $T_y:=\inf\{t>0: X_t=y\}$,  and \emph{with no killing inside} if $X_{\zeta-}\notin I$ for $\zeta<\infty$,  where $\zeta$ is the lifetime of $X$.  A significant characterization tells us that a regular diffusion process $X$ with no killing inside is uniquely determined by a \emph{canonical scale function} $\bs$ and a \emph{canonical speed measure} $\fm$; see,  e.g.,  \cite[V\S7]{RW87} and \cite[VII\S3]{RY99}.  Note that $\bs$ is a continuous and strictly increasing function on $I$ giving the hitting distributions of $X$ in the sense that for any $a,x,b\in I$ with $a<x<b$,
\begin{equation}\label{eq:02}
	\mathbf{P}_x(T_b<T_a)=\frac{\bs(x)-\bs(a)}{\bs(b)-\bs(a)}. 
\end{equation}
When $\bs(x)=x$,  $X$ is called \emph{on its natural scale}. 
The canonical speed measure $\fm$ is a fully supported Radon measure on $I$,  which is roughly defined as $-\frac{1}{2}h''_{a,b}$ on every open interval $(a,b)\subset I$,  where $h_{a,b}(x):=\mathbf{E}_xT_a\wedge T_b$ is concave and $-h''_{a,b}$ is the Radon measure induced by the second order derivative of $-h_{a,b}$ in the sense of distribution.  A more comprehensible explanation for canonical speed measure is as follows. When on its natural scale,  $X$ can be expressed as a time change of Brownian motion and $\fm$ measures the speed of its movements: In regions where $\fm$ is large,  $X$ moves slowly.  Particularly,  the Brownian motion is on its natural scale and its canonical speed measure is the Lebesgue measure.  

A famous analytic treatment to reach a regular diffusion process concerns the \emph{generalized second order differential operator}
\begin{equation}\label{eq:01}
\mathscr L:=\frac{1}{2}\frac{d^2}{d\fm d\bs}
\end{equation}
 and its associated \emph{Feller semigroup}.  This semigroup leads to a \emph{Feller process} identifying with the expecting diffusion.  A systematic introduction is referred to in \cite{M68}.  To our knowledge,  it was Kac and Krein \cite{KK74},  who first initialized a spectral theory,  known as \emph{Krein's correspondence},  to generalize the operator \eqref{eq:01} to the case that $\bs(x)=x$ and $\fm$ is determined by a right continuous and (not strictly) increasing function.  This theory applied to Markov process in \cite{K75} and led to so-called \emph{quasidiffusion}.  A quasidiffusion,  also termed as \emph{generalized diffusion} in,  e.g.,  \cite{W74,  KW82, LM20} and \emph{gap diffusion} in,  e.g.,  \cite{K81},  is by definition a \emph{time change} of Brownian motion.  It is quite similar to a regular diffusion,  besides that the continuity of sample paths may fail,  and instead the so-called \emph{skip-free} property holds: A quasidiffusion jumps across only the gaps of $E_m$.  At a heuristic level,  a quasidiffusion may be thought of as the \emph{trace} of Brownian motion on the topological support of $\fm$,  which is also called the \emph{speed measure} of quasidiffusion.  
 
Henceforth there appeared rich investigations of quasidiffuions.  For example,  J\"urgen \cite{G75} studied the operator \eqref{eq:01} of quasidiffusions in a Feller's framework.  Kasahara \cite{K75} raised a semigroup approach to quasidiffusions with the help of Krein's correspondence,  and the semigroup property of quasidiffuions was further discovered in,  e.g.,  \cite{K80}.  Krein's correspondence is actually a resolvent approach to quasidiffuions.  A related consideration also appeared in \cite{LM20}.  Some ingredients in the context of Markov processes for quasidiffusions were examined in,  e.g.,  \cite{K81,  K86,  K87} and the references thereof.  In the context of stochastic analysis,  a quasidiffusion turns to be a semimartingale.  Its semimartingale decomposition was accomplished in \cite{BK87}.  
 
Nevertheless,  as far as we know,  there has not appeared a pathwise setup for quasidiffusion,  as an analogue of regular diffusion.  (Actually time change of Brownian motion is a pathwise definition,  while it is not led to by certain straightforward pathwise properties like continuity,  regularity,  and etc.) This setup becomes our first main object of current paper, because the theory of quasidiffusions would not be complete without including it.  To accomplish it,  we will introduce a so-called \emph{skip-free Hunt process} (SFH in abbreviation) in one dimension.  The state space of SFH is a \emph{nearly closed subset} $E$ of extended real number system $\overline{\bR}$,  an extension of interval $I$,  i.e.  $\overline E:= E\cup \{l,r\}$ is a closed subset of $\overline{\bR}$ where $l=\inf\{x: x\in E\}$ and $r=\sup\{x: x\in E\}$.  Then an SFH is by definition a Hunt process on $E$ with the \emph{regular property} (SR) (similar to that for regular diffusion),  the \emph{skip-free property} (SF) (in place of the continuity of sample paths for regular diffusion) and no killing inside (SK); see Definition~\ref{DEF51} for more details.  This concept obviously generalizes regular diffusion: When $E$ is an interval,  (SF) is identified with the continuity of sample paths and an SFH is actually a regular diffusion.  Certainly a quasidiffusion is an SFH.  (To be rigorous,  the condition (QK) for a quasidiffusion stated in \S\ref{SEC61} should be assumed for this fact.  Note that (QK) bars the possibility of killing at endpoints $l$ and $r$.  In other words,  the \emph{Robin boundary condition} for \eqref{eq:01} will not be under consideration for simplification.) On the contrary,  one of the main results in this paper,  Theorem~\ref{THM71},  shows that every SFH is a quasidiffusion up to a homeomorphism.  More precisely,  it turns out in Theorem~\ref{THM58} that every SFH $X$ gives a strictly increasing,  continuous real valued function $\bs$ on $E$,  called \emph{scale function}, in the same sense as \eqref{eq:02},  and $\tilde{X}_t:=\bs(X_t), t\geq 0$,  an SFH on its natural scale,  is also a quasidiffusion on $\tilde{E}:=\bs(E)$.   Furthermore  when on its natural scale,  $X$ admits a \emph{speed measure} roughly defined as $-\frac{1}{2}h''_{a,b}$ on $E\cap (a,b)$ for any $a,b\in E$ with $a<b$,  where $h_{a,b}(x):=\mathbf{E}_xT_a\wedge T_b, x\in E\cap (a,b)$ can be extended to a concave function on $(a,b)$.  This speed measure is identified with that of $X$ as a quasidiffusion. In a word,  the following processes with identifying speed measures are equivalent: 
\begin{itemize}
\item[(1)] An SFH on its natural scale;  
\item[(2)] A quasidiffusion with no killing inside.  
\end{itemize}
As a corollary,  an SFH is uniquely determined by its scale function and speed measure (up to an affine transformation),  as is analogous to a regular diffusion; see Corollary~\ref{COR64}.  

The second object of this paper is to complete an analytic characterization of SFHs by means of \emph{Dirichlet forms.}  A Dirichlet form is a closed symmetric form with Markovian property on an $L^2$-space.  Theory of Dirichlet forms is very useful for studying symmetric Markov processes,  because every \emph{regular} Dirichlet form is associated to a Hunt process due to the celebrated works  \cite{F71, F71-2} by Fukushima.  We refer readers to \cite{FOT11, CF12} for notations and terminologies in the theory of Dirichlet forms.  As a time-changed Brownian motion,  a quasidiffusion is always symmetric with respect to its speed measure.  The formulation of its associated Dirichlet form then falls into a general framework,  termed as \emph{trace Dirichlet forms} corresponding to \emph{time change transformation} of general Markov processes; see,  e.g.,  \cite{FHY04,  CFY06} as well as \cite[Chapter 5]{CF12}.  The explicit expression of Dirichlet form associated to a quasidiffusion has been already obtained in,  e.g.,  \cite{KW09} and \cite[Theorem~3.1]{L23}.  But the problem we are concerned with is,  like the strongly local property for the Dirichlet form of a regular diffusion,  which property of a Dirichlet form may lead to a quasidiffusion (or a general SFH).  In Definition~\ref{DEF51-2} we will put forward a concept of \emph{strongly local-like} property for a Dirichlet form,  which is quite similar to but slightly more general than the strongly local property.  Then another main result,  Theorem~\ref{THM53},  obtains the equivalence between the strongly local-like property and its association to an SFH for a regular (and irreducible) Dirichlet form.  In other words,  in  comparison with that strongly local property corresponds to the continuity of sample paths,  strongly local-like property leads to the skip-free property of sample paths.  As a byproduct,  the explicit expression of Dirichlet form associated to an SFH will be also formulated in Theorem~\ref{THM53}.

Throughout this paper,  the condition (SK) for an SFH and the condition (QK) for a quasidiffusion are assumed to bar the possibility of killing inside.  We wish to state emphatically that without assuming (QK),  the investigations of quasidiffusion still hold true,  while there may appear killing at endpoints contained in the state space; see Remark \ref{RM73}.  The case of SFH is more complicated.  An SFH without (SK) may admit killing everywhere,  so a \emph{killing measure} should be introduced to characterize the killing part of this process.  We hope to make it in a future contribution. 

The paper is organized as follows.   In \S\ref{SEC5} we will investigate the SFH in one dimension.  Its scale function will be obtained in Theorem~\ref{THM58} and its speed measure will be explicitly defined in \S\ref{SEC53}.  A review of quasidiffusion will be outlined in \S\ref{SEC6}.  Note that a quasidiffusion is a semimartingale.  In particular we will distinguish Markov local times and  semimartingale local times for a quasidiffusion in Lemma~\ref{LM65},  so that the It\^o-Tanaka-Meyer formula is set up.  The correspondence between  SFH and quasidiffusion will be established in \S\ref{SEC7}.  
Finally the analytic characterization of SFHs in terms of Dirichlet forms will be completed in \S\ref{SEC2}.  


\subsection*{Notations}
Let $\overline{\mathbb{R}}=[-\infty, \infty]$ be the extended real number system.  A set $E\subset \overline{\bR}$ is called a \emph{nearly closed subset} of $\overline{\bR}$ if $\overline E:= E\cup \{l,r\}$ is a closed subset of $\overline{\bR}$ where $l=\inf\{x: x\in E\}$ and $r=\sup\{x: x\in E\}$.  The point $l$ or $r$ is called the left or right endpoint of $E$.  Denote by $\overline{\mathscr K}$ the family of all nearly closed subsets of $\overline{\bR}$.  Set
\[
	\mathscr K:=\{E\in \overline{\mathscr K}: E\subset \bR\},
\]
and every $E\in \mathscr K$ is called a \emph{nearly closed subset} of $\bR$.  

Let $E$ be a locally compact separable metric space.  We denote by $C(E)$ the space of all real valued continuous functions on $E$.  In addition,  $C_c(E)$ is the subspace of $C(E)$ consisting of all continuous functions on $E$ with compact support and
\[
	C_\infty(E):=\{f\in C(E): \forall \varepsilon>0, \exists K\text{ compact},  |f(x)|<\varepsilon, \forall x\in E\setminus K\}.  
\]
The functions in $C_\infty(E)$ are said to be vanishing at infinity.  
Given an interval $J$,  $C_c^\infty(J)$ is the family of all smooth functions with compact support on $J$.  

The abbreviations CAF and PCAF stand for \emph{continuous additive functional} and \emph{positive continuous additive functional} respectively.

\section{Skip-free Hunt processes}\label{SEC5}

In this section we will introduce and study skip-free Hunt processes in one dimension.  They are defined by virtue of certain pathwise properties.    

\subsection{Skip-free Hunt processes}\label{SEC51}

We first put forward the state space of SFH.  
Let $E\in \overline{\mathscr K}$ be a nearly closed subset of $\overline{\bR}$ ended by $l$ and $r$. Write $[l,r]\setminus \overline{E}$ as a disjoint union of open intervals:
\begin{equation}\label{eq:51}
	[l,r]\setminus \overline{E}=\cup_{k\geq 1}(a_k,b_k).  
\end{equation}
We add a ceremony $\partial$ to $E$ and define $E_\partial:=E\cup \{\partial\}$.  More precisely,  $\partial$ is an additional isolated point when $E=\overline{E}$.  When $E=\overline E\setminus \{l\}$ or $E=\overline{E}\setminus \{r\}$,  $\partial$ is identified with $l$ or $r$.  When $E=\overline{E}\setminus \{l,r\}$,  $E_\partial$ is the one-point compactification of $E$. 

Now let 
\[
X=\left\{\Omega,  \mathscr F_t,  \theta_t,  X_t,  \mathbf{P}_x,  \zeta\right\}
\]
be a \emph{Hunt process} on $E_\partial$,  where $\{\mathscr F_t\}_{t\in [0,\infty]}$ is the minimum augmented admissible filtration and $\zeta=\inf\{t>0: X_t=\partial\}$ is the lifetime of $X$.  
The other notations and terminologies are standard and we refer readers to,  e.g.,  \cite[Appendix~A]{CF12}. 

\begin{definition}\label{DEF51}
Let $X$ be a Hunt process on $E_\partial$.  Then $X$ is called a \emph{skip-free Hunt process} if the following are satisfied:
\begin{itemize}
\item[(SF)] \emph{Skip-free property}: $(X_{t-} \wedge X_t,  X_{t-}\vee X_t)\cap E=\emptyset$ for any $t<\zeta$,  $\mathbf{P}_x$-a.s.  and all $x\in E$.   
\item[(SR)] \emph{Regular property}: $\mathbf{P}_x(T_y<\infty)>0$ for any $x, y\in E$,  where $T_y:=\inf\{t>0: X_t=y\}$ ($\inf \emptyset:=\infty$).   
\item[(SK)] There is \emph{no killing inside} in the sense that if $\mathbf{P}_x(\zeta<\infty)>0$ for $x\in E$,  then $l$ or $r$ does not belong to $E$ and $\mathbf{P}_x(X_{\zeta-}\notin E,  \zeta<\infty)=\mathbf{P}_x(\zeta<\infty)$.  
\end{itemize}
\end{definition}
\begin{remark}
\begin{itemize}\label{RM53}
\item[(1)] When $\overline{E}$ is a closed interval,  (SF) is identified with the continuity of all sample paths and $X$ is a regular diffusion with no killing inside;  see,  e.g.,  \cite[Chapter VII,  \S3]{RY99}. 
\item[(2)] (SF) particularly implies the following: Given states $x,y,z\in E$ with $x<y<z$ (or $x>y>z$),  and times $r<s$,  if $X_r=x$ and $X_s=z$,  then there is $t\in (r,s)$ such that $X_t=y$ or $X_{t-}=y$.   
To see this,  set $t:=\inf\{u>r: X_u>y\}$.  Then $r<t\leq s$,  $X_t\geq y$ and $X_{t-}\leq y$.  If $X_t>y$ and $X_{t-}<y$,  then $y\in (X_{t-}, X_t)$ as violates (SF).   
\item[(3)] (SR) is slightly different from the regular property for a diffusion in \cite[Chapter VII,  \S3]{RY99} (see also \cite[Chapter V,  (45.2)]{RW87}).    Here we impose $\mathbf{P}_x(T_y<\infty)>0$ for all $x\in E$,  while in \cite{RY99} this condition is only assumed for $x\in E\setminus \{l,r\}$.  This stronger assumption bars the possibility that $l$ or $r$ that contained in $E$ becomes an absorbing point.  (So we have to use strict irreducibility rather than irreducibility for the analytic characterization of SFHs in Theorem~\ref{THM53}.) See Corollary~\ref{COR55} for further discussion. 
\item[(4)] The process $X$ is called \emph{conservative} if $\mathbf{P}_x(\zeta=\infty)=1$ for all $x\in E$.  Note that if $l,r\in E$,  then $X$ is conservative due to (SK).  In general this definition is equivalent to that $\mathbf{P}_x(\zeta=\infty)=1$ holds for one $x\in E$.  To see this,  consider the case $l\notin E,  r\in E$ and the other cases can be argued analogously.  Lemma~\ref{LM54}~(4) tells us that $\zeta=T_l=\lim_{n\rightarrow \infty} T_{a_n}$ with $a_n\downarrow l$.  Take $x,y\in E$ with $a_n<x<y$.  It follows from Lemma~\ref{LM54}~(1) and the strong Markov property of $X$ that
\[
	\mathbf{P}_y(T_{a_n}<\infty)=\mathbf{P}_y(T_x<\infty)\mathbf{P}_x(T_{a_n}<\infty).  
\]
Letting $n\uparrow \infty$ we get that $\mathbf{P}_y(\zeta<\infty)=\mathbf{P}_y(T_x<\infty)\mathbf{P}_x(\zeta<\infty)$.  On account of (SR),  $\mathbf{P}_y(\zeta<\infty)>0$ is equivalent to $\mathbf{P}_x(\zeta<\infty)>0$.  Particularly,  $X$ is conservative if $\mathbf{P}_x(\zeta=\infty)=1$ holds for one $x\in E$.  
\item[(5)] When $l\notin E$ or $r\notin E$,  $X_{\zeta-}=\partial$ if $\zeta<\infty$.   In abuse of notations we will also write $X_{\zeta-}=l$ or $r$ to stand for that the convergence $\lim_{t\uparrow \zeta} X_t=\partial$ is along the decreasing or increasing direction.  Loosely speaking,  $l$ or $r$ or both of them are viewed as the ceremony.   
\end{itemize}

\end{remark}

In this section we always fix an SFH $X$ on $E_\partial$.    When there is no risk of ambiguity we would omit the subscript of $E_\partial$ and write $E$ for the state space of $X$.  For $a\in E$, define
\[
	T_{>a}:=\inf\{t>0: X_t>a\},\quad T_{<a}:=\inf\{t>0:X_t<a\}
\]
and 
\[
	T_{\geq a}:=\inf\{t>0: X_t\geq a\},\quad T_{\leq a}:=\inf\{t>0:X_t\leq a\}. 
\]
Set $T_{\neq a}:=\inf\{t>0: X_t\neq a\}=T_{>a}\wedge T_{<a}$.  
The lemma below states some useful facts concerning $X$. 

\begin{lemma}\label{LM54}
\begin{itemize}
\item[\rm (1)] For $x\in E$,
\[
	\mathbf{P}_x\left(\bigcup_{y,z\in E \text{ s.t. }x<y<z\text{ or }z<y<x} \{T_z< T_y\} \right)=0.  
\]
\item[\rm (2)] For $x,y \in E$ with $x>y$ (resp. $x<y$),  
\[
\mathbf{P}_x(T_{\leq y}=T_y)=1\quad  (\text{resp.  } \mathbf{P}_x(T_{\geq y}=T_y)=1). 
\]   
\item[\rm (3)] Let $x<y_n\uparrow y$ or $x>y_n\downarrow y $ with $x,y_n,y\in E$.  Then $$\mathbf{P}_x(\lim_{n\rightarrow \infty}T_{y_n}=T_y)=1. $$  
\item[\rm (4)] When $j\notin E$ for $j=l$ or $r$,  it holds $\mathbf{P}_x$-a.s.  for any $x\in E$ that 
\[
	T_j:=\lim_{n\rightarrow \infty}T_{a_n}=\left\lbrace
	\begin{aligned}
		&\zeta,\quad \text{if }\zeta<\infty,  X_{\zeta-}=j,\\
		&\infty,\quad \text{otherwise},
	\end{aligned}
	\right.
\]
where $a_n\rightarrow j$ is taken to be a monotone sequence contained in $E$.  Particularly,  
\[
	\zeta=T_l,\quad  (\text{resp. }  \zeta=T_r;\;  \zeta=T_l\wedge T_r)
\]
if $l\notin E, r\in E$ (resp.  $l\in E, r\notin E$; $l,r\notin E$).   
\item[\rm (5)] It holds that
\begin{equation}\label{eq:52}
	\mathbf{P}_x(T_{>x}=0)=\left\lbrace \begin{aligned}
		1,\quad x\neq a_k, \\
		0,\quad x=a_k,
	\end{aligned} \right.  \quad x\in E\setminus \{r\},
\end{equation}
and 
\begin{equation}\label{eq:53}
	\mathbf{P}_x(T_{<x}=0)=\left\lbrace \begin{aligned}
		1,\quad x\neq b_k, \\
		0,\quad x=b_k,
	\end{aligned} \right.  \quad x\in E\setminus \{l\},
\end{equation}
where $a_k,b_k$ are given in \eqref{eq:51}.  
\item[(6)]  For any $x\in E$,  $\mathbf{P}_x(T_x=0)=1$.  
\end{itemize}
\end{lemma}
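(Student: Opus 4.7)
The plan is to treat the items out of order: (2) first, then (1) as a direct corollary, then (3) and (4) by monotone-limit arguments, and finally (5) and (6) case by case. Throughout I rely on the standard Hunt-process toolkit (the strong Markov property, Blumenthal's 0-1 law, and quasi-left-continuity) combined with (SF).

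For (2), fix $x<y$ in $E$ and consider the stopping time $T_{\geq y}$. Since $T_y\geq T_{\geq y}$ trivially, it suffices to prove $X_{T_{\geq y}}=y$ on $\{T_{\geq y}<\infty\}$. The left limit $L:=X_{T_{\geq y}-}$ is $\leq y$ because $X_s<y$ for $s<T_{\geq y}$. If $L<y<X_{T_{\geq y}}$, then $y$ lies in the open jump interval, contradicting (SF). I would rule out the remaining option $L=y<X_{T_{\geq y}}$ by approximation: choose reals $q_m\uparrow y$ and set $\sigma_m:=T_{\geq q_m}$; a sandwich argument gives $\sigma_m\uparrow T_{\geq y}$. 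If $\sigma_m<T_{\geq y}$ for all $m$ large, then $X_{\sigma_m}\in[q_m,y)\to y$ and quasi-left-continuity at $T_{\geq y}$ forces $X_{T_{\geq y}}=y$; if $\sigma_m=T_{\geq y}$ for some $m$, the jump at $T_{\geq y}$ is from $X_{T_{\geq y}-}\leq q_m<y$ up across $y\in E$, and (SF) again yields $X_{T_{\geq y}}=y$. Part (1) is then immediate: for $x<y<z$, $T_y=T_{\geq y}\leq T_z$ by (2), and equality would force $y=z$ since $X_{T_y}=y$ and $X_{T_z}=z$. The uncountable union is reduced to a countable one by running the argument on a countable dense $E_0\subset E$ and extending to arbitrary $y,z\in E$ via (3); the symmetric case $z<y<x$ is identical with $T_{\leq y}$ in place of $T_{\geq y}$.

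Parts (3) and (4) follow the same monotone-limit scheme. In (3), $T_{y_n}$ is monotone by (1) with limit $T\leq T_y$; on $\{T<\infty\}$, quasi-left-continuity gives $X_T=\lim X_{T_{y_n}}=\lim y_n=y$, so $T_y\leq T$ and hence $T=T_y$. On $\{T=\infty\}$ the statement is trivial. In (4) the same argument with $a_n\to j\notin E$ yields $X_T=j\notin E$, so $X_T=\partial$ and $T\geq\zeta$; combined with $T\leq\zeta$ this gives $T=\zeta$ and $X_{\zeta-}=j$. The identities $\zeta=T_l$, $\zeta=T_r$, or $\zeta=T_l\wedge T_r$ then follow from (SK), which restricts the possible exit points to $\{l,r\}\setminus E$.

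For (5), when $x=a_k$ the process can exceed $x$ only by a jump of size at least $b_k-a_k$, and such jumps are locally finite a.s., so $T_{>x}>0$ a.s.\ and Blumenthal gives $\mathbf{P}_x(T_{>x}=0)=0$. When $x\neq a_k$, assume $\mathbf{P}_x(T_{>x}>0)=1$ for contradiction; then $X_{T_{>x}-}\leq x$, (SF) excludes $X_{T_{>x}-}<x<X_{T_{>x}}$, the case $X_{T_{>x}-}=x<X_{T_{>x}}$ forces $(x,X_{T_{>x}})\cap E=\emptyset$ (hence $x=a_k$, excluded), and $X_{T_{>x}}=x$ is ruled out because $T_{>x}\circ\theta_{T_{>x}}=0$ by the infimum definition while strong Markov would then give $\mathbf{P}_x(T_{>x}=0)>0$, contradicting Blumenthal. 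So $T_{>x}=\infty$ a.s., contradicting (SR) for any $x'\in E\cap(x,r]$. The statement for $T_{<x}$ is symmetric. For (6), when $x$ is both right- and left-accumulation in $E$, (5) yields $T_{>x}=T_{<x}=0$ a.s., so arbitrarily small $s_1,s_2$ exist with $X_{s_1}>x>X_{s_2}$; Remark~\ref{RM53}(2) delivers $t$ between them with $X_t=x$ or $X_{t-}=x$, and iteration with quasi-left-continuity rules out the second option in the limit, yielding $T_x=0$. The corner cases $x\in\{a_k,b_k\}$ reduce to (5) and strong Markov at the first return of $X$ to $x$ from a nearby state in $E$, supplied by (SR). The main obstacle is the exclusion of $L=y<X_{T_{\geq y}}$ in (2), which requires a careful interplay between the approximating sequence $\sigma_m$ and quasi-left-continuity; once (2) is secured, (1), (3) and (4) are essentially automatic and the case analysis in (5), (6) is routine.
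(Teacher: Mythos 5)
Your items (1)--(5) are essentially sound and run parallel to the paper's proof: for (2) you replace the paper's use of the inequality $T_y\leq \hat{T}_y:=\inf\{t>0:X_{t-}=y\}$ by an approximation $T_{\geq q_m}\uparrow T_{\geq y}$ plus quasi-left-continuity, which works; (3), (4) and (5) are the paper's arguments in compressed form. Two caveats there. First, in (1) the union is over uncountably many pairs, and your fix (``countable dense $E_0$, extend via (3)'') does not close the gap, because (3) is itself a per-sequence almost-sure statement: applying it to every $y\in E\setminus E_0$ reintroduces uncountably many null sets. The clean route, which the paper takes, is to argue pathwise from the single global null set in (SF): for any $\omega$ with $T_z(\omega)<T_y(\omega)$ one exhibits a time $t=T_{>y}(\omega)$ at which $y\in(X_{t-}(\omega),X_t(\omega))$, so all pairs are handled simultaneously.

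The genuine gap is in (6) for a non-isolated gap endpoint, say $x=a_k$ with $E\ni x_n\uparrow x$. Here $\mathbf{P}_x(T_{>x}=0)=0$, so the process immediately drops strictly below $x$, and nothing so far forces it to return to $x$ at arbitrarily small times; your phrase ``strong Markov at the first return of $X$ to $x$ from a nearby state in $E$, supplied by (SR)'' is not an argument. The paper needs a genuinely quantitative contradiction here: assuming $\mathbf{P}_x(T_x>0)=1$, it picks $\varepsilon_0$ with $\mathbf{P}_x(T_x>\varepsilon_0)>1-c/3$ where $c:=\mathbf{P}_{x_0}(T_x<\infty)>0$, uses $T_{x_n}\downarrow 0$ under $\mathbf{P}_x$ together with the strong Markov property to deduce $\mathbf{P}_{x_n}(T_x>\varepsilon_0/2)>1-2c/3$ for large $n$, and then plays this off against part (3) under $\mathbf{P}_{x_0}$, where $T_{x_n}\uparrow T_x$ forces $\mathbf{P}_{x_0}(T_{x_n}>T_x-\varepsilon_0/2,\,T_x<\infty)\to c$ while the strong Markov property bounds the same quantity by $\mathbf{P}_{x_0}(T_{x_n}<\infty)\,\mathbf{P}_{x_n}(T_x<\varepsilon_0/2)<2c/3$. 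This two-sided estimate is the heart of item (6) and the hardest step of the whole lemma; it is absent from your proposal, so as written the proof of (6) is incomplete for the points $x\in\{a_k,b_k\}$ that are not isolated.
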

\begin{proof}
\begin{itemize}
\item[(1)] Let $\omega\in \Omega$ such that $T_z(\omega)<T_y(\omega)$ for $x,y,z\in E$ with  $x<y<z$.  We show that $(X_{t-}(\omega) \wedge X_t(\omega),  X_{t-}(\omega)\vee X_t(\omega))\cap E\neq  \emptyset$ for some $t$,  so that (SF) leads to the assertion.  To do this,  note that
\[
	T_{>y}\leq T_z<T_y\leq \hat{T}_y:=\inf\{t>0:X_{t-}=y\},
\]
where $T_y\leq \hat{T}_y$ is due to,  e.g.,  \cite[Theorem~A.2.3]{FOT11}.   Set $t:=T_{>y}(\omega)$.  By means of the c\`adl\`ag property of all paths,  we have 
\[
X_t(\omega)\geq y,  \quad X_{t-}(\omega)\leq y,
\]
while $t<T_y(\omega)\leq \hat{T}_y(\omega)$.  Hence $X_t(\omega)>y$ and $X_{t-}(\omega)<y$.  Therefore $y\in (X_{t-}(\omega), X_t(\omega))\cap E$.  
\item[(2)] We only treat the case $x>y$.  Note that $T_{\leq y}\leq T_y$.  Take $\omega\in \Omega$ such that $T_{\leq y}(\omega)<T_y(\omega)$.   It suffices to show that $y \in (X_{t-}(\omega) \wedge X_t(\omega),  X_{t-}(\omega)\vee X_t(\omega))$ for some $t$.  In fact,  set 
\[
	t:=T_{\leq y}(\omega)<T_y(\omega)\leq \hat{T}_y(\omega).  
\]
Mimicking the argument in the first assertion,  we get that $X_t(\omega)< y$ and $X_{t-}(\omega)>y$.  Hence $y\in (X_t(\omega), X_{t-}(\omega))$.  
\item[(3)] We only consider the case $x<y_n\uparrow y$.  In view of the first assertion,  $T_{y_n}$ is increasing and $T_{y_n}\leq T_y$,  $\mathbf{P}_x$-a.s.  Hence  $T:=\lim_{n\rightarrow \infty}T_{y_n}\leq T_y$.  The quasi-left-continuity of $X$ implies that
\[
	X_T=X_{\lim_{n\rightarrow \infty} T_{y_n}}=\lim_{n\rightarrow\infty}  X_{T_{y_n}}=y,\quad \mathbf{P}_x\text{-a.s. on }\{T<\infty\}. 
\]
Thus $T\geq T_y$ and we eventually obtain that $T=T_y$,  $\mathbf{P}_x$-a.s.   
\item[(4)] We only treat the case $j=l\notin E$.  Fix $x\in E$ and assume without loss of generality that $a_1<x$.  It follows from the quasi-left-continuity of $X$ that
\[
	X_{T_l}=\lim_{n\rightarrow \infty} X_{T_{a_n}}=l,\quad \mathbf{P}_x\text{-a.s. on }\{T_l<\infty\}.  
\]
Hence $T_l\geq \zeta$,  $\mathbf{P}_x$-a.s.  As a result,  $T_l<\infty$ implies $T_l=\zeta$ and $X_{\zeta-}=X_{T_l-}=l$, because $T_{a_n}\leq T_l<\infty$ leads to $T_{a_n}<\zeta$.  To the contrary suppose $\zeta<\infty$ and $X_{\zeta-}=l$.  It suffices to show $T_{a_n}<\infty$, so that 
$T_{a_n}\leq \zeta$ and thus $T_l\leq \zeta$,  as leads to $T_l=\zeta$ by means of $T_l\geq \zeta$.  To accomplish this, note that $X_0=x>a_n$ and since $\lim_{s\uparrow \zeta}X_s=l$,  there exists $s<\zeta$ such that $X_s<a_n$.  Remark~\ref{RM53}~(2) indicates that there exists $t\in (0,s)$ such that $X_t=a_n$ or $X_{t-}=a_n$.  For the former case it holds that $T_{a_n}\leq t$.  For the latter case,  $T_{a_n}\leq \hat{T}_{a_n}\leq t$.  Hence $T_{a_n}<\infty$ is concluded.  
\item[(5)] We first prove \eqref{eq:52} and then \eqref{eq:53} can be argued similarly.  When $x=a_k$,   the second assertion implies
\[
	T_{>x}=\inf\{t>0: X_t\geq b_k\}=T_{b_k}.  
\]
The right continuity of all paths leads to $T_{b_k}>0$,  $\mathbf{P}_{x}$-a.s.  Hence $\mathbf{P}_{x}(T_{>x}=0)=0$.  Next consider $x\neq a_k$.  We show that $X_{T_{>x}}=x$,  $\mathbf{P}_x$-a.s. on $\{T_{>x}<\infty\}$.  In fact, it follows from the right continuity of all paths that $X_{T_{>x}}\geq x$.  To the contrary,  take $E\ni x_n \downarrow x$.  Suppose $\omega\in \Omega$ such that $T_{>x}(\omega)<\infty$ and $X_{T_{>x}}(\omega)\geq x_n$.  
Set $t:=T_{>x}(\omega)$.  Then $X_{t-}(\omega)\leq x$ and $X_t(\omega)\geq x_n$.  Hence $x_{n+1}\in (X_{t-}(\omega), X_t(\omega))$.  (SF) implies that $X_{T_{>x}}< x_n$,  $\mathbf{P}_x$-a.s.  on $\{T_{>x}<\infty\}$.  Letting $n\uparrow \infty$,  we obtain that $X_{T_{>x}}\leq x$ and eventually $X_{T_{>x}}= x$ is concluded.  With this fact at hand,  we argue $\mathbf{P}_x(T_{>x}=0)=1$ by contradiction.  On account of Blumenthal 0-1 law,  $\mathbf{P}_x(T_{>x}=0)$ equals $0$ or $1$.  Suppose that this value is $0$.  Set
\[
	\Lambda=\{\omega\in \Omega: \exists \varepsilon>0\text{ s.t. }X_t(\omega)\leq x \text{ for all }t\leq \varepsilon\}=\{T_{>x}>0\}.  
\]
By the right continuity of all paths and the definition of $T_{>x}$,  it holds that
\[
\begin{aligned}
	0&=\mathbf{P}_x\left(T_{>x}<\infty,  \exists \varepsilon>0\text{ s.t. }X_{T_{>x}+t}\leq x\text{ for }0\leq t\leq \varepsilon \right) \\
	&=\mathbf{E}_x\left(1_\Lambda\circ \theta_{T_{>x}}; T_{>x}<\infty\right).  
\end{aligned}\]
Using strong Markov property, $X_{T_{>x}}=x$ on $\{T_{>x}<\infty\}$ and $\mathbf{P}_x(\Lambda)=1$,  we get
\begin{equation}\label{eq:54}
	0=\mathbf{E}_x\left(\mathbf{P}_{X_{T_{>x}}}(\Lambda); T_{>x}<\infty\right)=\mathbf{P}_x(T_{>x}<\infty).  
\end{equation}
However $T_{>x}<T_{x_n}$ and (SR) implies $\mathbf{P}_x(T_{x_n}<\infty)>0$.  Thus $\mathbf{P}_x(T_{>x}<\infty)>0$,  which contradicts with \eqref{eq:54}.  Therefore \eqref{eq:52} is concluded.  
\item[(6)] We first consider $x\in E\setminus \{l,r, a_k,b_k: k\geq 1\}$. Since  $\mathbf{P}_x( T_{<x}=0)=\mathbf{P}_x(T_{>x}=0)=1$,  it follows that for $\mathbf{P}_x$-a.s.  $\omega\in \Omega$,  there exist two sequences $t_n\downarrow 0$ and $s_n\downarrow 0$ such that
\[
	X_{t_n}(\omega)< x,\quad X_{s_n}(\omega)> x. 
\]
It suffices to show that for each $n\geq 1$,  there exists $t< t_n$ such that $X_t(\omega)=x$.  Assume without loss of generality that $s_n<t_n$.  
In view of Remark~\ref{RM53}~(2),  there exists $t'\in (s_n,t_n)$ such that $X_{t'}(\omega)=x$ or $X_{t'-}(\omega)=x$.  For the former case take $t:=t'$.  For the latter case note that $T_x(\omega)\leq \hat{T}_x(\omega) \leq t'$.   Hence there exists $t\leq t'<t_n$ such that  $X_t(\omega)=x$. 

Next we prove the assertion for $x=a_k\in E$ and the other cases can be treated analogically.  If $x$ is isolated in $E$,  then $\mathbf{P}_x(T_x=0)=1$ is the consequence of Corollary~\ref{COR55}.  Assume that $x_0<x_1<\cdots x_n\uparrow x$ with $x_n\in E$ for $n\geq 0$.  Argue by contradiction and suppose $\mathbf{P}_x(T_x>0)=1$.  Note that (SR) implies $0<c:=\mathbf{P}_{x_0}(T_x<\infty)\leq 1$.  In view of 
\[
	1=\mathbf{P}_x(T_x>0)=\mathbf{P}_x(\lim_{\varepsilon\downarrow 0} \{T_x>\varepsilon\})=\uparrow \lim_{\varepsilon\downarrow 0} \mathbf{P}_x(T_x>\varepsilon),
\]
one can obtain $\varepsilon_0>0$ such that 
\begin{equation}\label{eq:45}
\mathbf{P}_x(T_x>\varepsilon_0)>1-c/3.
\end{equation}  
Set
\[
	\Lambda:=\{T_x>0\}=\{\omega\in \Omega: \exists \varepsilon>0,\text{s.t. } X_t<x,  0<t<\varepsilon\},\quad \mathbf{P}_x\text{-a.s.}
\]
Since $T_{x_n}=T_{\leq x_n}$,  $\mathbf{P}_x$-a.s.  due to the second assertion,  it follows that $T_{x_n}$ is decreasing, $\mathbf{P}_x$-a.s., and $\lim_{n\rightarrow \infty} T_{x_n}(\omega)=0$ for any $\omega\in \Lambda$.  These yield
\[
	1=\mathbf{P}_x(\lim_{n\rightarrow\infty}T_{x_n}<\varepsilon_0/2)=\uparrow \lim_{n\rightarrow \infty}\mathbf{P}_x(T_{x_n}<\varepsilon_0/2).  
\]
Hence there exists $N\in \bN$ such that for all $n>N$,  
\begin{equation}\label{eq:46}
\mathbf{P}_x(T_{x_n}<\varepsilon_0/2)>1-c/3.
\end{equation}
On account of \eqref{eq:45} and \eqref{eq:46},  we have
\[
	\mathbf{P}_x(T_{x_n}<\varepsilon_0/2,  T_x>\varepsilon_0)\geq \mathbf{P}_x(T_{x_n}<\varepsilon_0/2)-\mathbf{P}_x(T_x\leq \varepsilon_0)>1-\frac{2c}{3}.  
\]
On the event $\{T_{x_n}<\varepsilon_0/2,  T_x>\varepsilon_0\}$,  $T_x>T_{x_n}$ and thus $T_x=T_{x_n}+T_x\circ \theta_{T_{x_n}}$.  Using the strong Markov property,  one gets
\[
\begin{aligned}
\mathbf{P}_x(T_{x_n}<\varepsilon_0/2,  T_x>\varepsilon_0)&\leq \mathbf{P}_x(T_{x_n}<\varepsilon_0/2,  T_x\circ \theta_{T_{x_n}}>\varepsilon_0/2) \\
&=\mathbf{P}_x(T_{x_n}<\varepsilon_0/2)\mathbf{P}_{x_n}(T_x>\varepsilon_0/2).  
\end{aligned}\]
Hence for all $n>N$,  
\begin{equation}\label{eq:47}
	\mathbf{P}_{x_n}(T_x>\varepsilon_0/2)\geq \mathbf{P}_x(T_{x_n}<\varepsilon_0/2,  T_x>\varepsilon_0)>1-\frac{2c}{3}.  
\end{equation}
On the other hand,  the third assertion indicates that
\begin{equation}\label{eq:48}
	c=\mathbf{P}_{x_0}(\uparrow \lim_{n\rightarrow \infty}T_{x_n}=T_x,  T_x<\infty)=\lim_{n\rightarrow \infty} \mathbf{P}_{x_0}(T_{x_n}>T_x-\varepsilon_0/2,  T_x<\infty).  
\end{equation}
Since $T_{x_n}\leq T_x$,  $\mathbf{P}_{x_0}$-a.s.,  it follows from the strong Markov property and \eqref{eq:47} that for all $n>N$,  
\[
\begin{aligned}
 \mathbf{P}_{x_0}(T_{x_n}>T_x-\varepsilon_0/2,  T_x<\infty)&=\mathbf{P}_{x_0}(T_{x_n}>T_x-\varepsilon_0/2,  T_{x_n}<\infty) \\
 &=\mathbf{P}_{x_0}(T_x\circ \theta_{T_{x_n}}<\varepsilon_0/2,  T_{x_n}<\infty)  \\
 &=\mathbf{P}_{x_0}(T_{x_n}<\infty) \mathbf{P}_{x_n}(T_x<\varepsilon_0/2)<\frac{2c}{3},
\end{aligned}\]
as leads to a contradiction with \eqref{eq:48}.  
\end{itemize}
That completes the proof.  
\end{proof}

Due to the strong Markov property of $X$,  there is a constant $\kappa(x)\in [0,\infty]$ depending on $x\in E$ such that 
\[
\mathbf{P}_x(T_{\neq x}>t)=e^{-\kappa(x) t};
\]
see,  e.g.,  \cite[Proposition~2.19]{RY99}.  	(SR) bars the possibility of $\kappa(x)=0$.  If $\kappa(x)=\infty$,  then the process leaves $x$ at once.   When $\kappa(x)\in (0,\infty)$,  $x$ is called a \emph{holding point} because the process stays at $x$ for an exponential holding time before jumping.  

\begin{corollary}\label{COR55}
The family of holding points consists of all isolated points in $E$.  
\end{corollary}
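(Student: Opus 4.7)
The plan is to reduce the corollary to Lemma~\ref{LM54}(5) by decomposing $T_{\neq x}=T_{>x}\wedge T_{<x}$ and recalling that, by Blumenthal's $0$-$1$ law, $\kappa(x)<\infty$ if and only if $\mathbf{P}_x(T_{\neq x}>0)=1$, while (SR) rules out the case $\kappa(x)=0$. So the task is to determine exactly when both $T_{>x}>0$ and $T_{<x}>0$ hold $\mathbf{P}_x$-a.s., interpreting these hitting times at endpoints in the obvious way.

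For the easy direction, suppose $x\in E$ is \emph{not} isolated. Then $x$ is an accumulation point of $E$ from above or from below. If from above, there exist $E\ni x_n\downarrow x$ with $x_n>x$; this forces $x\neq a_k$ for every $k$ (the interior of $(a_k,b_k)$ meets no point of $E$) and $x<r$. Lemma~\ref{LM54}(5) then gives $\mathbf{P}_x(T_{>x}=0)=1$, hence $T_{\neq x}=0$ $\mathbf{P}_x$-a.s., so $\kappa(x)=\infty$ and $x$ is not a holding point. The case of accumulation from below is symmetric via the second display of Lemma~\ref{LM54}(5).

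For the converse, assume $x$ is isolated in $E$. If $x\in E\setminus\{l,r\}$, then a punctured neighborhood of $x$ lies in $[l,r]\setminus\overline E$, so $x$ is simultaneously the right end of some gap $(a_j,b_j)$ and the left end of some gap $(a_k,b_k)$; that is, $x=b_j$ and $x=a_k$. Lemma~\ref{LM54}(5) then yields both $\mathbf{P}_x(T_{>x}=0)=0$ and $\mathbf{P}_x(T_{<x}=0)=0$, hence $\mathbf{P}_x(T_{\neq x}>0)=1$ and, combined with (SR), $\kappa(x)\in(0,\infty)$. If instead $x=l$ is isolated in $E$ (with $l\neq r$), then necessarily $l=a_k$ for some $k$, so Lemma~\ref{LM54}(5) gives $\mathbf{P}_l(T_{>l}=0)=0$; since $E\subset[l,r]$ and $X$ only takes values in $E_\partial$, the process never visits states strictly below $l$, so $T_{<l}=\infty$ and therefore $T_{\neq l}=T_{>l}>0$ $\mathbf{P}_l$-a.s. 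The case $x=r$ is symmetric.

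The bookkeeping at the endpoints $l,r$ is the only genuine obstacle: Lemma~\ref{LM54}(5) excludes them from one of its two statements, so one must separately argue that the ``outward'' hitting time ($T_{<l}$ or $T_{>r}$) is identically infinite, which follows directly from $E\subset[l,r]$. Everything else is a routine translation between the probabilistic condition $\mathbf{P}_x(T_{\neq x}>0)=1$ and the topological condition that $x$ be isolated.
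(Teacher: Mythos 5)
Your proof is correct and follows essentially the same route as the paper: decompose $T_{\neq x}=T_{>x}\wedge T_{<x}$, apply Lemma~\ref{LM54}(5) in both directions according to whether $x$ is an accumulation point of $E$ from above/below or a gap endpoint, and handle $l$ and $r$ separately (the paper writes $T_{\neq l}=T_{b_p}$ where you note $T_{<l}=\infty$, which amounts to the same thing). No gaps.
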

\begin{proof}
Let $x\in E$ be not isolated.  Then \eqref{eq:52} and \eqref{eq:53} imply that $\mathbf{P}_x(T_{>x}=0)=1$ or $\mathbf{P}_x(T_{<x}=0)=1$.  Since $T_{\neq x}=T_{>x}\wedge T_{<x}$,  it follows that $\mathbf{P}_x(T_{\neq x}=0)=1$.  Hence $x$ is not a holding point.  Now consider an isolated point $x\in E$.  It has three possibilities: $x=l=a_p$,  $x=r=b_q$ or $x=a_p=b_q$ for some $p,q\geq 1$.  When $x=l=a_p$,  we have
\[
	T_{\neq l}=T_{\geq b_p}=T_{b_p}.  
\]
Thus $\mathbf{P}_l(T_{\neq l}=0)=\mathbf{P}_l(T_{b_p}=0)=0$ and $l$ is a holding point.  Another case $x=r=b_q$ can be argued similarly.  When $x=a_p=b_q$,  it suffices to note that  $T_{\neq x}=T_{\leq a_q}\wedge T_{\geq b_p}=T_{a_q}\wedge T_{b_p}$,  $\mathbf{P}_x$-a.s.,  and hence 
\[
\mathbf{P}_x(T_{\neq x}=0)\leq \mathbf{P}_x(T_{a_q}=0)+\mathbf{P}_x(T_{b_p}=0)=0.  
\] That completes the proof.   
\end{proof}

\subsection{Scale function of SFH}

Let $a,x,b\in E$ with $a<x<b$.  Set $H:=(a,b)\cap E$ and $\tau_H:=\inf\{t>0: X_t\notin H\}$.  In view of Lemma~\ref{LM54}~(2),
\[
\tau_H=T_a\wedge T_b,\quad \mathbf{P}_x\text{-a.s.}
\]
Repeating the argument in \cite[Proposition~3.1]{RY99},  we can obtain that $\mathbf{E}_x\tau_H<\infty$ and thus $\tau_H<\infty$,  $\mathbf{P}_x$-a.s.  
Particularly,  $\mathbf{P}_x(T_a<T_b)+\mathbf{P}_x(T_b<T_a)=1$.  The following result extends \cite[Proposition~3.2]{RY99}. 

\begin{theorem}\label{THM58}
There exists a continuous and strictly increasing real valued function $\bs$ on $E$ such that for any $a,x,b\in E$ with $a<x<b$,  
\begin{equation}\label{eq:55}
\mathbf{P}_x(T_b<T_a)=\frac{\bs(x)-\bs(a)}{\bs(b)-\bs(a)}. 
\end{equation}
If $\tilde{\bs}$ is another function with the same properties,  then $\tilde{\bs}=\alpha \bs +\beta$ with $\alpha>0$ and $\beta\in \bR$.  
\end{theorem}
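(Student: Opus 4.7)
The plan is to adapt the classical proof of \cite[Chapter~VII, Proposition~3.2]{RY99} for regular diffusions to the present setting, with the skip-free property playing the role of sample-path continuity. First I would fix a reference pair $a_0<b_0$ in $E$ and set
\[
	\bs(x):=\mathbf{P}_x(T_{b_0}<T_{a_0}),\qquad x\in E\cap [a_0,b_0],
\]
so that $\bs(a_0)=0$ and $\bs(b_0)=1$. For $x\in E$ with $x>b_0$ I would define $\bs(x):=1/\mathbf{P}_{b_0}(T_x<T_{a_0})$, and symmetrically for $x<a_0$; these extensions are precisely what the target formula \eqref{eq:55} forces when applied to the reference pair together with an outside point.

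The analytic backbone is the identity
\[
	\bs(x) \;=\; \mathbf{P}_x(T_a<T_b)\,\bs(a)\;+\;\mathbf{P}_x(T_b<T_a)\,\bs(b),
\]
valid for $a_0\leq a<x<b\leq b_0$ in $E$. To establish it I apply the strong Markov property at $T_a\wedge T_b$, which is $\mathbf{P}_x$-a.s.\ finite by the discussion preceding the theorem. Lemma~\ref{LM54}(2) combined with (SF) gives $T_{a_0}\geq T_a$ and $T_{b_0}\geq T_b$ $\mathbf{P}_x$-a.s., so on $\{T_a<T_b\}$ (resp.\ $\{T_b<T_a\}$) the conditional probability of $\{T_{b_0}<T_{a_0}\}$ given $\mathscr F_{T_a\wedge T_b}$ equals $\bs(a)$ (resp.\ $\bs(b)$). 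Solving for $\mathbf{P}_x(T_b<T_a)$ then gives \eqref{eq:55}, contingent on the strict positivity of $\bs(b)-\bs(a)$.

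Strict monotonicity is the main content. On $E\cap [a_0,b_0]$ the identity specialises to $\bs(x)=\mathbf{P}_x(T_y<T_{a_0})\,\bs(y)$ for $a_0<x<y\leq b_0$, reducing $\bs(x)<\bs(y)$ to $\mathbf{P}_x(T_y<T_{a_0})<1$ together with $\bs(y)>0$. I would argue the former by contradiction: if $\mathbf{P}_x(T_y<T_{a_0})=1$, then the strong Markov property at $T_y$ and Lemma~\ref{LM54}(2) (to reach $a_0$ from $y$ the process must first revisit $x$) together with (SR) force $\mathbf{P}_x(T_y<\infty)=\mathbf{P}_y(T_x<\infty)=1$. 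Iterating then produces a strictly increasing sequence of finite stopping times $\sigma_k$ with $X_{\sigma_{2k}}=x$, $X_{\sigma_{2k+1}}=y$, and $T_{a_0}>\sigma_k$ for every $k$. Quasi-left-continuity of $X$ rules out $\sigma_k\uparrow\sigma_\infty<\infty$ (since $X_{\sigma_k}$ oscillates between the distinct points $x$ and $y$), so $\sigma_k\uparrow\infty$ and $T_{a_0}=\infty$ $\mathbf{P}_x$-a.s., contradicting (SR). A symmetric iteration gives $\bs(y)>0$, and the values of $\bs$ outside $[a_0,b_0]$ are treated by enlarging the reference pair.

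Continuity of $\bs$ at a non-isolated $x\in E$ (isolated points are automatic) is handled as follows. Assume first that $x$ lies in the interior of some reference interval $[a_0,b_0]\cap E$, and pick $x_n\uparrow x$ in $E$; the case $x_n\downarrow x$ is symmetric. Lemma~\ref{LM54}(5) yields $\mathbf{P}_x(T_{<x}=0)=1$, and Remark~\ref{RM53}(2) together with the c\`adl\`ag property produces, for every $\varepsilon>0$, a time $t\in(0,\varepsilon)$ with $X_t<x_n$ for all $n$ sufficiently large, whence $T_{x_n}\to 0$ $\mathbf{P}_x$-a.s. Substituting into $1-\bs(x_n)=(1-\bs(x))/\mathbf{P}_x(T_{x_n}<T_{b_0})$, which comes from the key identity applied to the triple $(x_n,x,b_0)$, gives $\bs(x_n)\to\bs(x)$. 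Continuity at the (at most two) endpoints $l,r\in E$, when they are accumulation points of $E$, follows from an analogous ``cycle'' computation under $\mathbf{P}_{a_0}$ that exploits $T_{x_n}\uparrow T_r$ from Lemma~\ref{LM54}(3). Finally, uniqueness is immediate from \eqref{eq:55}: for any admissible $\tilde\bs$ and any fixed $a<b$ in $E$ the identity forces $\tilde\bs(x)=\tilde\bs(a)+\frac{\tilde\bs(b)-\tilde\bs(a)}{\bs(b)-\bs(a)}(\bs(x)-\bs(a))$ for every $x\in(a,b)\cap E$, and enlarging $[a,b]$ propagates the affine relation to all of $E$. The iterative contradiction in the strict-monotonicity step is, in my view, the main technical obstacle.
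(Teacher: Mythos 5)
Your proposal is correct and follows essentially the same architecture as the paper's proof: define $\bs$ through hitting probabilities of a reference pair, derive the harmonicity identity by the strong Markov property at $T_a\wedge T_b$ together with the skip-free ordering of hitting times, rule out degeneracy by an excursion/cycle argument contradicting (SR), prove continuity via convergence of hitting times, and pass to general $E$ by consistency of nested reference pairs (the paper's $\bs_1,\bs_2$ patching, your ``enlarging the reference pair''). Two local deviations are worth recording. For strict monotonicity the paper writes $\{T_r<\infty\}\subset\bigcup_n A_n$ over successive excursions and kills each $A_n$ by the strong Markov property and countable subadditivity, whereas you generate infinitely many oscillations between $x$ and $y$ and invoke quasi-left-continuity to force the accumulation time to be infinite; both close the contradiction, but your intermediate assertion that (SR) forces $\mathbf{P}_y(T_x<\infty)=1$ is not justified (regularity only gives positive probability) and is also unnecessary --- it suffices to run the iteration on the event $\{T_{a_0}<\infty\}$, on which Lemma~\ref{LM54}(1) guarantees each intermediate return, and conclude $\mathbf{P}_x(T_{a_0}<\infty)=0$. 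For continuity the paper keeps the observer at a fixed third point $x$ and uses Lemma~\ref{LM54}(3) to get $\mathbf{P}_x(T_b<T_{a_n})\to\mathbf{P}_x(T_b<T_a)$ uniformly in all cases, while at interior points you send the targets to the starting point itself, using $T_{x_n}=T_{\leq x_n}\downarrow 0$ from Lemma~\ref{LM54}(2),(5); this is valid (fix $t$ with $X_t<x$, then $X_t<x_n$ for large $n$), but since you must revert to the paper's mechanism at the endpoints $l,r\in E$ anyway, the variation buys little. With the one overreaching claim weakened as above, the proof is sound.
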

\begin{proof}
We first consider the case $l,r\in E$.  Using the strong Markov property and repeating the argument in the first paragraph of the proof of \cite[Proposition~3.2]{RY99},  we can conclude
\[
\mathbf{P}_x(T_r<T_l)=\mathbf{P}_x(T_a<T_b)\mathbf{P}_a(T_r<T_l)+\mathbf{P}_x(T_b<T_a)\mathbf{P}_b(T_r<T_l).  
\]
Setting $\bs(x)=\mathbf{P}_x(T_r<T_l)$ we get the formula \eqref{eq:55}.  To show $\bs$ is strictly increasing,  take $l\leq x<y\leq r$ and note that 
\begin{equation}\label{eq:56}
	\bs(x)=\mathbf{P}_x(T_r<T_l)=\mathbf{P}_x(T_y<T_l,  T_r\circ \theta_{T_y}<T_l\circ \theta_{T_y})=\mathbf{P}_x(T_y<T_l) \bs(y).  
\end{equation}
Hence $\bs(x)\leq \bs(y)$.  We assert that $\bs(y)>\bs(l)=0$.  Argue by contradiction and suppose $\bs(y)=0$,  i.e.  $\mathbf{P}_y(T_r<T_l)=0$.  Set $\sigma_0:=0$,  and for $n\geq 0$, 
\[
\tau_{n+1}:=\inf\{t>\sigma_n: X_t=y\},\quad \sigma_{n+1}:=\inf\{t>\tau_{n+1}: X_t=l  \text{ or }r\}.  
\]	 
Then $A_n:=\{X_{\sigma_{n}}=r,  \sigma_{n}<\infty\}=\{\tau_n<\infty,  T_r\circ \theta_{\tau_n}<T_l\circ \theta_{\tau_n}\}$ and 
\[
	0<\mathbf{P}_l(T_r<\infty)=\mathbf{P}_l(\cup_{n\geq 1}A_n)\leq \sum_{n\geq 1}\mathbf{P}_l(A_n),
\]
where the first inequality is due to (SR).  However the strong Markov property yields
\[
	\mathbf{P}_l(A_n)=\mathbf{P}_l(\tau_n<\infty)\mathbf{P}_y(T_r<T_l)=0,
\]
as leads to a contradiction.  Hence $\bs(y)>0$ for any $y>l$.  Now take $l<x<y$ in \eqref{eq:56} and clearly,  $\bs(x)=\bs(y)$ amounts to $\mathbf{P}_x(T_y<T_l)=1$.  We can find that this is impossible by mimicking the argument proving $\bs(y)>\bs(l)$.  As a result,  $\bs$ is strictly increasing on $E$.  Let us turn to prove that $\bs$ is continuous on $E$.  
To accomplish this,  fix $a\in E$ and let $E\ni a_n\downarrow a$.  Take $x,b\in E$ and $a_1<x<b$.  In view of Lemma~\ref{LM54}~(3) and \eqref{eq:55},
\[
	\lim_{n\rightarrow \infty}\frac{\bs(x)-\bs(a_n)}{\bs(b)-\bs(a_n)}=\lim_{n\rightarrow \infty}\mathbf{P}_x(T_b<T_{a_n})=\mathbf{P}_x(T_b<T_a)=\frac{\bs(x)-\bs(a)}{\bs(b)-\bs(a)}.
\]
Hence $\lim_{n\rightarrow \infty}\bs(a_n)=\bs(a)$.  The left continuity can be argued analogously.  Therefore the continuity of $\bs$ on $E$ is eventually concluded.  

For the general case,  take $l_1,l_2,r_1,r_2\in E$ with $l_2<l_1<r_1<r_2$.  Put
\[
	\bs_1(x):=\mathbf{P}_x(T_{r_1}<T_{l_1}),\quad x\in (l_1,r_1)
\]
and 
\[
	\bs_2(x):=\mathbf{P}_x(T_{r_2}<T_{l_2}),\quad x\in (l_2,r_2).
\]
Mimicking the argument treating the case $l,r\in E$,  we can conclude that both $\bs_1$ and $\bs_2$ are continuous, strictly increasing and solving \eqref{eq:55} for $l_1\leq a<x<b\leq r_1$.  In addition,  it follows from the strong Markov property that for $x\in (l_1,r_1)$, 
\[
\begin{aligned}
\bs_2(x)&=\mathbf{P}_x(T_{l_1}<T_{r_1})\mathbf{P}_{l_1}(T_{r_2}<T_{l_2})+\mathbf{P}_x(T_{r_1}<T_{l_1})\mathbf{P}_{r_1}(T_{r_2}<T_{l_2}) \\
&=(c_{r_1}-c_{l_1})\bs_1(x)+c_{l_1},
\end{aligned}\]
where $c_{l_1}:=\mathbf{P}_{l_1}(T_{r_2}<T_{l_2})$ and $c_{r_1}:=\mathbf{P}_{r_1}(T_{r_2}<T_{l_2})$.  Then by a standard argument,  one can obtain a continuous and strictly increasing function $\bs$,  which is unique up to an affine transformation,  on $E$ such that \eqref{eq:55} is satisfied.  That completes the proof.  
\end{proof}
\begin{remark}\label{RM48}
Although defined up to an affine transformation,  the function $\bs$ in the preceding theorem is called the \emph{scale function} of $X$.  If $\bs$ may be taken equal to $x$ on $E$,  $X$ is said to be on its \emph{natural scale}.  Clearly,  $\tilde{X}=(\tilde{X}_t)_{t\geq 0}:=(\bs(X_t))_{t\geq 0}$ is an SFH on its natural scale on $\tilde{E}=\bs(E):=\{\bs(x):x\in E\}$.  It is worth pointing out that if $l\in E$ or $r\in E$,  then $\bs(l)\in \tilde{E}$ or $\bs(r)\in \tilde{E}$ is finite.  Particularly,  $\tilde{E}$ is a nearly closed subset of $\bR$ (not of $\overline{\bR}$),  i.e.  $\tilde{E}\in \mathscr K$. 
\end{remark}

We end this subsection with two corollaries concerning the scale function.  The first corollary shows that if $l\notin E$ is approachable in finite time,  $\bs(l):=\lim_{x\downarrow l}\bs(x)$ is finite.  The analogical assertion for $r$ holds as well.   

\begin{corollary}\label{COR59}
If $l\notin E$ and $\mathbf{P}_x(X_{\zeta-}=l, \zeta<\infty)>0$ for some $x\in E$,  then $\bs(l)>-\infty$.  
\end{corollary}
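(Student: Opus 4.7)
The plan is to argue by contradiction. Assume $\bs(l):=\lim_{y\downarrow l, y\in E}\bs(y) = -\infty$; since $\{X_{\zeta-}=l,\zeta<\infty\}\subset\{T_l<\infty\}$ by Lemma~\ref{LM54}(4), it suffices to derive $\mathbf{P}_x(T_l<\infty)=0$. The key preliminary step is that for every $y,b\in E$ with $y<b$,
\[
	\mathbf{P}_y(T_b<T_l)=1,
\]
where $T_l:=\lim_n T_{a_n}$ with $E\ni a_n\downarrow l$. Indeed, by Theorem~\ref{THM58}, $\mathbf{P}_y(T_b<T_{a_n})=(\bs(y)-\bs(a_n))/(\bs(b)-\bs(a_n))\to 1$, and since $T_{a_n}$ is monotonically increasing by Lemma~\ref{LM54}(1), the events $\{T_b<T_{a_n}\}$ increase to $\{T_b<T_l\}$. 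In particular $\mathbf{P}_y(T_b<\infty)=1$.

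If $r\notin E$, choose $E\ni b_n\uparrow r$. The preliminary step gives $T_{b_n}<T_l$ $\mathbf{P}_x$-a.s., so by monotonicity of $T_{b_n}$ one has $\sup_n T_{b_n}\leq T_l$. On the positive-probability event $A:=\{X_{\zeta-}=l,\zeta<\infty\}$, Lemma~\ref{LM54}(4) identifies $T_l$ with $\zeta<\infty$, but at the same time identifies $\sup_n T_{b_n}$ with $T_r$, which equals $\infty$ on $A$ since $X_{\zeta-}\neq r$. This contradiction closes the first case.

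If $r\in E$, taking $b=r$ yields $\mathbf{P}_y(T_r<\infty)=1$ for every $y\in E$. Using $X_{T_{a_n}}=a_n$ on $\{T_{a_n}<\infty\}$ and the strong Markov property, one computes
\[
	p:=\mathbf{P}_r(T_l<\infty)=\mathbf{P}_r(T_{a_n}<\infty)\,\mathbf{P}_{a_n}(T_l<\infty),\quad \mathbf{P}_{a_n}(T_l<\infty)=\mathbf{P}_{a_n}(T_r<\infty)\,p=p,
\]
and the hypothesis $p>0$ then forces $\mathbf{P}_r(T_{a_n}<\infty)=1$. Starting from $r$, set $\tau_0:=0$, $\tau_{2k+1}:=\tau_{2k}+T_{a_n}\circ\theta_{\tau_{2k}}$, $\tau_{2k+2}:=\tau_{2k+1}+T_r\circ\theta_{\tau_{2k+1}}$; each $\tau_k$ is finite $\mathbf{P}_r$-a.s., with $X_{\tau_k}$ alternating between $r$ and $a_n$. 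A finite limit $\tau_k\uparrow\tau_\infty<\infty$ on a positive-probability set would force $\lim_k X_{\tau_k}$ to exist by the c\`adl\`ag property, which is impossible since $r\neq a_n$; hence $\tau_k\uparrow\infty$ $\mathbf{P}_r$-a.s. But on $\{T_l<\infty\}$ each $\tau_k<\zeta=T_l<\infty$, contradicting $p>0$. The main obstacle is this second case: the scale-function estimate alone does not close the argument, and one must exploit the skip-free structure to generate oscillations between $r$ and $a_n$ that the c\`adl\`ag property then rules out.
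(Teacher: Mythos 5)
Your proof is correct, and it shares the paper's key computation --- from $\bs(l)=-\infty$ and Theorem~\ref{THM58} one deduces $\mathbf{P}_y(T_b<T_l)=1$ for all $y<b$ in $E$ --- but it closes the contradiction differently. The paper needs no case distinction on $r$: it fixes a single $b\in E$ with $b>x$, sets $S_0:=0$, $T_{n+1}:=\inf\{t>S_n:X_t=b\}$, $S_{n+1}:=\inf\{t>T_{n+1}:X_t=x\}$, observes the inclusion $\{X_{\zeta-}=l,\zeta<\infty\}\subset\bigcup_{n\geq 0}\{S_n<\infty,\,T_l<T_{n+1}\}$, and annihilates each term by the strong Markov property together with $\mathbf{P}_x(T_l<T_b)=0$. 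Your case $r\notin E$ is arguably slicker than that (the comparison of $T_r=\sup_n T_{b_n}\leq T_l$ with $T_r=\infty$ on the event in question, via Lemma~\ref{LM54}~(4), is immediate), but your case $r\in E$ essentially re-creates the paper's excursion scheme with the pair $(a_n,r)$ in place of $(x,b)$, at the cost of extra bookkeeping: deriving $\mathbf{P}_r(T_{a_n}<\infty)=1$ from $p=\mathbf{P}_r(T_{a_n}<\infty)\,p$, and the c\`adl\`ag oscillation argument forcing $\tau_k\uparrow\infty$. Two steps you leave implicit do follow from your own displayed identities and should be recorded: that the hypothesis yields $p>0$ uses the factorization $0<\mathbf{P}_x(T_l<\infty)=\mathbf{P}_x(T_r<\infty)\,p$ (the same computation you wrote for $a_n$), and the bound $\tau_k\leq T_l$ on $\{T_l<\infty\}$ uses the skip-free ordering $T_{a_n}\leq T_l$ under $\mathbf{P}_r$ together with $\mathbf{P}_{a_n}(T_r<T_l)=1$, iterated through the time shifts. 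The paper's single decomposition buys uniformity over both endpoint configurations and a shorter write-up; your route buys a quicker kill when $r\notin E$ and shows that the endpoint $r$ itself can serve as the upper barrier.
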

\begin{proof}
Argue by contradiction and suppose $\bs(l)=-\infty$.  Letting $a\downarrow l$ in \eqref{eq:55},  we have $\mathbf{P}_x(T_b<T_l)=1$.  Set
\[
	S_0:=0,\quad T_{n+1}:=\inf\{t>S_n:X_t=b\},\quad S_{n+1}:=\{t>T_{n+1}: X_t=x\},\quad n\geq 0.  
\]
Note that 
\[
\{X_{\zeta-}=l, \zeta<\infty\}\subset \cup_{n\geq 0} \{S_n<\infty,  T_l<T_{n+1}\}.  
\]
Since $T_{n+1}=S_n+T_b\circ \theta_{S_n}$ and $T_l=S_n+T_l\circ \theta_{S_n}$ on $\{S_n<\infty\}$,  it follows from the strong Markov property that
\[
	\mathbf{P}_x(S_n<\infty,  T_l<T_{n+1})=\mathbf{P}_x(S_n<\infty)\mathbf{P}_x(T_l<T_b)=0.
\]
This yields $\mathbf{P}_x(X_{\zeta-}=l,\zeta<\infty)=0$,  as violates the condition.  That completes the proof. 
\end{proof}

Another corollary extends \cite[Proposition~3.5]{RY99}.  Recall that $T_j=\inf\{t>0:X_t=j\}$ for $j=l$ or $r$ in $E$.  When $j\notin E$,  we have made the convention $T_j=\lim_{a\rightarrow j}T_a$.  
 Let $T:=T_l\wedge T_r$ and define a stopped process
\[
	X^T_t:=\left\lbrace
	\begin{aligned}
	& X_t,\quad t<T,   \\
	 & X_T, \quad t\geq T,
	 \end{aligned}\right. 
\]
where $X_T:=l$ (resp.  $r$) if $T=T_l<\infty$ (resp.  $T=T_r<\infty$).  The result below states that $\bs(X^T)$ is a (not necessarily continuous) local martingale.  Particularly,  the stopped process for $X$ on its natural scale is always a local martingale. 

\begin{corollary}
For any $x\in E$,  $\tilde{X}^T=(\bs(X^T_t))_{t\geq 0}$ is an $\{\sF_t,\mathbf{P}_x\}$-local martingale.  
\end{corollary}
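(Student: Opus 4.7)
The plan is to execute the standard localization via exit times from a nested sequence of compact sub-intervals of $E$, adapted to the skip-free setting. First, I would pick $a_n, b_n \in E$ satisfying $a_n < x < b_n$ for $n$ large, with $a_n \downarrow l$ and $b_n \uparrow r$ (taking $a_n = l$ or $b_n = r$ if the endpoint is an isolated point of $E$), and set $\sigma_n := T_{a_n} \wedge T_{b_n}$. Lemma~\ref{LM54}~(3)--(4) then yields $\sigma_n \uparrow T$, $\mathbf{P}_x$-a.s., and since $\sigma_n \leq T$ pathwise one has $\bs(X^T_{t \wedge \sigma_n}) = \bs(X_{t \wedge \sigma_n})$ throughout.

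The main step is to verify that for each $n$, $M^n_t := \bs(X_{t \wedge \sigma_n})$ is a bounded $(\mathscr{F}_t, \mathbf{P}_x)$-martingale. Boundedness is clear: $\bs$ is continuous on the compact set $[a_n, b_n] \cap E$, and the skip-free property (specifically $X_{\sigma_n} \in \{a_n, b_n\}$) forces $X_{t \wedge \sigma_n}$ to remain in this set. For the martingale property, Theorem~\ref{THM58} combined with skip-free gives the harmonic identity
\begin{equation*}
\mathbf{E}_y[\bs(X_{\sigma_n})] = \bs(a_n) \mathbf{P}_y(T_{a_n} < T_{b_n}) + \bs(b_n) \mathbf{P}_y(T_{b_n} < T_{a_n}) = \bs(y), \quad y \in [a_n, b_n] \cap E.
\end{equation*}
Applying the strong Markov property at a deterministic time $u$ and splitting on $\{\sigma_n \leq u\}$ versus $\{\sigma_n > u\}$ upgrades this to $\mathbf{E}_y[\bs(X_{u \wedge \sigma_n})] = \bs(y)$ for every $u \geq 0$. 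Combining this identity with the Markov property at time $s$ then produces $\mathbf{E}_x[\bs(X_{t \wedge \sigma_n}) \mid \mathscr{F}_s] = \bs(X_{s \wedge \sigma_n})$ whenever $s \leq t$, which is the desired martingale property.

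Finally, I would promote the reducing sequence $(\sigma_n)$ to the local martingale property of $\tilde X^T$. The main technical subtlety lies here: $\sigma_n$ only tends to $T$, which may be finite with positive $\mathbf{P}_x$-probability, so $\sigma_n$ is not literally a localizing sequence going to $+\infty$. This is resolved by observing that $\bs(X^T)$ is constant on $[T, \infty)$ with value $\bs(X_T)$, and the latter is $\mathbf{P}_x$-a.s.\ finite on $\{T < \infty\}$ thanks to Corollary~\ref{COR59} (and its mirror at the right endpoint). A standard extension of the reducing sequence --- for instance, augmenting $\sigma_n$ with a deterministic clock tending to $+\infty$ and exploiting that $\bs(X^T)$ is frozen past time $T$ --- then yields a genuine localizing sequence along which the stopped process remains a bounded martingale, completing the proof.
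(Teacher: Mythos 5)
Your first two steps are sound and run parallel to the paper's argument: the identity $\mathbf{E}_y[\bs(X_{\sigma_n})]=\bs(y)$ is exactly the content of Theorem~\ref{THM58} combined with $X_{\sigma_n}\in\{a_n,b_n\}$ (Lemma~\ref{LM54}~(2)), and upgrading it to the martingale property of $\bs(X_{\cdot\wedge\sigma_n})$ via the Markov property is a legitimate variant of the paper's Doob-martingale computation \eqref{eq:57}--\eqref{eq:58}. The gap is in the last paragraph. The sequence $\sigma_n=T_{a_n}\wedge T_{b_n}$ increases only to $T=T_l\wedge T_r$, which can be finite with positive probability, and your proposed repair does not close this. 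If you set $\rho_n:=\sigma_n\vee n$, then $\tilde X^T_{t\wedge\rho_n}=\tilde X^T_t$ for all $t\leq n$, so asserting that the stopped process is a (bounded) martingale is exactly the assertion you are trying to prove on $[0,n]$ --- the argument is circular. The observation that $\bs(X^T)$ is frozen on $[T,\infty)$ does not help, because on the event $\{\sigma_n<T<\infty\ \text{for all }n\}$ (which has positive probability whenever an endpoint not in $E$ is reachable in finite time) the process is still genuinely moving on the stochastic interval $(\sigma_n,T)$; no deterministic extension of $\sigma_n$ is ``free'' there. Nor can you simply pass to the limit $n\to\infty$ in the martingale identity for $M^n$: in the mixed case where, say, $\bs(l)=-\infty$ but $T_r<\infty$ with positive probability, the family $\{\bs(X_{t\wedge\sigma_n})\}_n$ is unbounded and its uniform integrability is not established.

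The correct repair is the paper's case analysis, and you already hold the key ingredient (Corollary~\ref{COR59}) without deploying it in the right place. By that corollary, an endpoint $j\in\{l,r\}$ with $|\bs(j)|=\infty$ satisfies $T_j=\infty$ a.s.; hence one should localize \emph{only} at such endpoints, i.e.\ take $\rho_n$ to be the minimum of $T_{a_n}$ (when $\bs(l)=-\infty$) and $T_{b_n}$ (when $\bs(r)=+\infty$), with the corresponding factor replaced by $+\infty$ when $\bs$ is finite at that endpoint. Then $\rho_n\uparrow\infty$ genuinely, and $\tilde X^T_{t\wedge\rho_n}$ is bounded because $\bs$ is finite at every reachable endpoint, so your harmonicity/Markov argument (or the paper's direct identification of the stopped process with $\mathbf{P}_x(T_r<T_l\,|\,\sF_t)$ up to an affine normalization, as in the displayed computation following Corollary~\ref{COR59}) applies and yields a bounded martingale for each $n$. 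In the extreme case where $\bs$ is finite at both endpoints, no localization is needed at all: $\tilde X^T$ is itself a bounded Doob martingale.
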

\begin{proof}
Consider the case $l,r\in E$.  We may take $\bs(x)=\mathbf{P}_x(T_r<T_l)$.  We assert that $\tilde{X}^T_t=\mathbf{P}_x\left(T_r<T_l |\sF_t\right)$,  so that $\tilde{X}^T$ is a Doob martingale.  This trivially holds for $x=l$ or $r$.  It suffices to prove it for $x\in E\setminus \{l,r\}$.  In fact, the strong Markov property yields
\begin{equation}\label{eq:57}
	\tilde{X}^T_t=\mathbf{P}_{X_{t\wedge T}}(T_r<T_l)=\mathbf{P}_x\left( T_r\circ \theta_{t\wedge T}<T_l\circ \theta_{t\wedge T}| \sF_{t\wedge T}\right).
\end{equation}
Since $t\wedge T<T_l$ and $t\wedge T<T_r$,  it follows that $T_l=t\wedge T + T_l\circ  \theta_{t\wedge T}$ and $T_r=t\wedge T + T_r\circ  \theta_{t\wedge T}$.  These,  together with $\{T_r<T_l\}\in \sF_T$,  imply
\begin{equation}\label{eq:58}
	\tilde{X}^T_t=\mathbf{P}_x\left(T_r<T_l|\sF_{t\wedge T} \right)=\mathbf{P}_x\left(T_r<T_l|\sF_{t} \right).
\end{equation}

Next,  we consider the case $l\notin E$ and $r\in E$.  In view of Lemma~\ref{LM54}~(4),  $\zeta=T_l=\lim_{E\ni l_n\downarrow l}T_{l_n}$.  When $X$ is conservative (see Remark~\ref{RM53}~(4)),  $\{T_{l_n}\}$ is a sequence of $\{\sF_t\}$-stopping times that increases to $\infty$.  Mimicking the argument treating the case $l,r\in E$,  we can obtain that $\bs(X_{t\wedge T_{l_n}\wedge T_r})$ is a Doob martingale.  Hence $\tilde{X}^T$ is a local martingale.  When $X$ is not conservative,  in view of Corollary~\ref{COR59},  $\bs(l)>-\infty$.  Letting $b=r$ and $a\downarrow l$ in \eqref{eq:55},  we get
\[
\bs(y)=(\bs(r)-\bs(l)) \mathbf{P}_y(T_r<T_l) +\bs(l),  \quad y\in E. 
\]
When $t\wedge T\geq T_l$,  $\bs(X^T_t)=\bs(l)$.  It follows that
\[
	\bs(X^T_t)=(\bs(r)-\bs(l))\mathbf{P}_{X^T_t}(T_r<T_l) \cdot 1_{\{t\wedge T<T_l\}} +\bs(l).  
\]
The strong Markov property of $X$ yields
\[
\begin{aligned}
	\mathbf{P}_{X^T_t}(T_r<T_l) \cdot 1_{\{t\wedge T<T_l\}}&=\mathbf{P}_x\left(T_r\circ \theta_{t\wedge T}<T_l\circ \theta_{t\wedge T},  t\wedge T<T_l |\sF_{t\wedge T }\right) \\
	&=\mathbf{P}_x(T_r<T_l,  t\wedge T<T_l| \sF_{t\wedge T}) \\
	&=\mathbf{P}_x(T_r<T_l|\sF_{t}).
\end{aligned}\]
As a result,  $\tilde{X}^T$ is a Doob martingale and hence a local martingale. 

The other cases $l\in E, r\notin E$ and $l,r\notin E$ can be treated analogously.  That completes the proof.  
\end{proof}

\subsection{Speed measure of SFH}\label{SEC53}

In this subsection we define the speed measure $\mu$ for an SFH $X$ on its natural scale,  i.e.  $\bs(x)=x$. 


\subsubsection{$l,r\notin E$}\label{SEC531}

Let us begin with the case $l,r\notin E$.   
Take $a, b\in E$ with $a<b$ and define a function on $[a,b]$ as follows:
\[
	h_{a,b}(x):=\left\lbrace
	\begin{aligned}
	&\mathbf{E}_x(T_a\wedge T_b),\quad x\in [a,b]\cap E,  \\
	&\frac{h_{a,b}(b_k)-h_{a,b}(a_k)}{b_k-a_k}\cdot (x-a_k)+h_{a,b}(a_k),\quad x\in (a_k,b_k)\subset (a,b),  k\geq 1.  
	\end{aligned}
	\right.
\]
Recall that a real valued function $f$ defined on $(a,b)$ is called \emph{convex} if 
\[
f(tx+(1-t)y)\leq t f(x)+(1-t)f(y),\quad \forall 0\leq t\leq 1,  x,y \in (a,b).  
\]
It is called \emph{concave} if $-f$ is convex.  For a convex function $f$,  both its left derivative $f'_-$ and right derivative $f'_+$ are increasing,  respectively left and right continuous,  and the set $\{x: f'_-(x)\neq f'_+(x)\}$ is at most countable.  Particularly,  the second derivative $f''$ of $f$ in the sense of distribution is a positive Radon measure on $(a,b)$;  see, e.g.,  \cite[Appendix~\S3]{RY99}.   We have this.

\begin{lemma}\label{LM511}
$h_{a,b}$ is concave on $(a,b)$.  
\end{lemma}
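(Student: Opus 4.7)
The plan is two-staged: first prove the concavity inequality for triples inside $[a,b]\cap E$ by a strong Markov argument, then extend to arbitrary triples in $(a,b)$ using the piecewise linearity of $h_{a,b}$ on the gaps.

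For the first step, I fix $u<v<w$ all in $[a,b]\cap E$ and consider $\tau:=T_u\wedge T_w$ under $\mathbf{P}_v$. Since $u,w\in E$ with $a\leq u<v<w\leq b$, Lemma~\ref{LM54}(2) gives $T_u\leq T_a$ and $T_w\leq T_b$, hence $\tau\leq T_a\wedge T_b$ and
\[
	T_a\wedge T_b=\tau+(T_a\wedge T_b)\circ\theta_\tau.
\]
By c\`adl\`ag regularity we have $X_{T_u}=u$ and $X_{T_w}=w$, so $X_\tau\in\{u,w\}$, $\mathbf{P}_v$-a.s. Taking $\mathbf{E}_v$-expectations and invoking the strong Markov property yields
\[
	h_{a,b}(v)=\mathbf{E}_v\tau+\mathbf{P}_v(T_u<T_w)\,h_{a,b}(u)+\mathbf{P}_v(T_w<T_u)\,h_{a,b}(w).
\]
Since $X$ is on its natural scale, \eqref{eq:55} identifies the two hitting probabilities as $(w-v)/(w-u)$ and $(v-u)/(w-u)$; discarding the nonnegative $\mathbf{E}_v\tau$ gives
\[
	(w-u)\,h_{a,b}(v)\geq(w-v)\,h_{a,b}(u)+(v-u)\,h_{a,b}(w).
\]

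For the second step, I extend this to any triple $u<v<w$ in $(a,b)$. Since $l,r\notin E$, every gap $(a_k,b_k)\subset(a,b)$ has both endpoints in $E$ and $h_{a,b}$ is linear on $[a_k,b_k]$ by construction. The generic case is $u,w\in[a,b]\cap E$ and $v\in(a_k,b_k)\subset(u,w)$: I write $h_{a,b}(v)$ as the convex combination $\tfrac{b_k-v}{b_k-a_k}h_{a,b}(a_k)+\tfrac{v-a_k}{b_k-a_k}h_{a,b}(b_k)$, apply the $E$-inequality from the first step to the triples $(u,a_k,w)$ and $(u,b_k,w)$, and take the same convex combination of the two resulting inequalities. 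The identity
\[
	(b_k-v)(w-a_k)+(v-a_k)(w-b_k)=(w-v)(b_k-a_k)
\]
collapses the combined estimate to the desired inequality at $(u,v,w)$. The remaining configurations, in which $u$ or $w$ lies in some gap $(a_j,b_j)$, reduce to this case by expressing $h_{a,b}(u)$ (resp.\ $h_{a,b}(w)$) as the convex combination of its $E$-endpoint values $h_{a,b}(a_j),h_{a,b}(b_j)$ and invoking the previous case.

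The probabilistic content is entirely in the first step, where the skip-free property (SF) combined with Lemma~\ref{LM54}(2) is essential to force $\tau\leq T_a\wedge T_b$. The main obstacle I anticipate is the bookkeeping in the second step when $u$ and $w$ belong to different gaps; however, the argument is purely algebraic once the concavity at $E$-points and the linear scaffolding on gaps are in hand, and the identity above is the only nontrivial calculation.
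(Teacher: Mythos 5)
Your proof is correct and follows essentially the same route as the paper's: the strong Markov decomposition of $T_a\wedge T_b$ at $\tau=T_u\wedge T_w$ together with the natural-scale hitting probabilities \eqref{eq:55} yields the concavity inequality for triples of points of $E$ (the paper's \eqref{eq:511}), and the extension to arbitrary triples in $(a,b)$ is the same linear interpolation across the gaps, via the same algebraic identity. The only cosmetic differences are that you treat the endpoints $a,b$ uniformly in the first step (the paper handles $x=a$ and $z=b$ as separate sub-cases, which your unified version covers since $h(a)=h(b)=0$), and that the configurations where $u$ and $v$ (or $v$ and $w$) lie in the same gap require the same routine bookkeeping that the paper also leaves implicit.
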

\begin{proof}
For convenience write $h$ for $h_{a,b}$.  It suffices to show that for  $x,y,z\in [a,b]$ with $x<y<z$,  
\begin{equation}\label{eq:59}
h(y)\geq \frac{z-y}{z-x}h(x)+\frac{y-x}{z-x} h(z).  
\end{equation}
Firstly take $x,y,z\in E$ with $a<x<y<z<b$ and set $T_1:=T_a\wedge T_b$ and $T_2:=T_x\wedge T_z$.  It follows from Lemma~\ref{LM54}~(2) that $T_2<T_1$,  $\mathbf{P}_y$-a.s.  Using the strong Markov property and Theorem~\ref{THM58},   we get
\begin{equation}\label{eq:511}
\begin{aligned}
	h(y)&=\mathbf{E}_y(T_2+T_1\circ \theta_{T_2}) \\
	&=\mathbf{E}_yT_2+\frac{z-y}{z-x}h(x)+\frac{y-x}{z-x} h(z)\\
	&>\frac{z-y}{z-x}h(x)+\frac{y-x}{z-x} h(z).  
\end{aligned}\end{equation}
Hence \eqref{eq:59} holds for $x,y,z\in E$.   When $x=a$ and $z<b$,  noting $h(a)=0$ and $T_z<T_b$ we have
\[
	h(y)\geq \mathbf{E}_y(T_a\wedge T_b,  T_z<T_a)\geq \mathbf{P}_y(T_z<T_a)\mathbf{E}_z (T_a\wedge T_b).
\] 
Thus \eqref{eq:59} still holds true.  Analogously \eqref{eq:59} holds for $z=b$ and $x>a$.  The case $x=a, z=b$ is obvious.  

Now we consider the general case $x_0,y_0,z_0\in (a,b)$ with $x_0<y_0<z_0$.  Set $\dagger_-:=\sup\{a\in E: a<\dagger_0\}$ and $\dagger_+:=\inf\{a\in E: a>\dagger_0\}$,  where $\dagger$ stands for $x,y$ or $z$,  and clearly $h(\dagger_0)=t_{\dagger_0} h(\dagger_-)+(1-t_{\dagger_0})h(\dagger_+)$ for $t_{\dagger_0}:=\frac{\dagger_+-\dagger_0}{\dagger_+-\dagger_-}$ for $\dagger_0\notin E$ and $t_{\dagger_0}:=0$ for $\dagger_0\in E$.  The first step has proved that \eqref{eq:59} holds for $x=x_\pm$,  $y=y_-$ and $z=z_-$.  Using $h(x_0)=t_{x_0} h(x_-)+(1-t_{x_0})h(x_+)$,  one  can easily obtain \eqref{eq:59} for $x=x_0, y=y_-$ and $z=z_-$.  Analogously \eqref{eq:59} holds for $x=x_0, y=y_-$ and $z=z_+$.  Using $h(z_0)=t_{z_0} h(z_-)+(1-t_{z_0})h(z_+)$,  we find that \eqref{eq:59} holds for $x=x_0, y=y_-$ and $z=z_0$.  Analogously it holds also for $x=x_0,  y=y_+$ and $z=z_0$. Finally by means of $h(y_0)=t_{y_0} h(y_-)+(1-t_{y_0})h(y_+)$ we eventually conclude \eqref{eq:59} for $x=x_0,y=y_0$ and $z=z_0$.  That completes the proof. 
\end{proof}

Take $a',b'\in E$ with $a'<a<b<b'$ and define $h_{a',b'}$ analogously.  It is easy to compute that
\[
	h_{a',b'}(x)=h_{a,b}(x)+\frac{b-x}{b-a}h_{a',b'}(a)+\frac{x-a}{b-a}h_{a',b'}(b),\quad x\in (a,b),
\]
so that the restriction of Radon measure $-h''_{a',b'}$ to $(a,b)$ is identified with $-h''_{a,b}$.  Define the speed measure $\mu$ on $(l,r)$  as follows:
\begin{equation}\label{eq:510}
	\mu|_{(a,b)}:=-\frac{1}{2}h''_{a,b},\quad \forall a,b\in E\text{ s.t. } l<a<b<r.  
\end{equation}
Noting the following result,  we denote the restriction of $\mu$ to $E$ still by $\mu$.  

\begin{lemma}\label{LM512}
Let $\mu$ be defined as \eqref{eq:510}.  Then $\mu$ is a positive Radon measure with $\text{supp}[\mu]=E$.  
\end{lemma}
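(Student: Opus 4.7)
The plan is to verify in order: well-definedness of $\mu$ as a positive Radon measure on $(l,r)$, the inclusion $\mathrm{supp}[\mu]\subset E$, and finally $E\subset \mathrm{supp}[\mu]$. Well-definedness follows since Lemma~\ref{LM511} makes $-h''_{a,b}$ a positive Radon measure on $(a,b)$ for every $a,b\in E$ with $a<b$, and the compatibility formula stated just before \eqref{eq:510} glues these into a single positive measure $\mu$ on $(l,r)$. Local finiteness is immediate: any compact $K\subset (l,r)$ sits in some $[a,b]\subset (a',b')$ with $a,b,a',b'\in E$, and $\mu([a,b])$ is bounded by $\tfrac12(h'_{a',b'}(a-)-h'_{a',b'}(b+))<\infty$ because the one-sided derivatives of a concave function are finite at interior points.

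The inclusion $\mathrm{supp}[\mu]\subset E$ is immediate from the definition of $h_{a,b}$: on each gap $(a_k,b_k)\subset (a,b)$ with $a,b\in E$, the function $h_{a,b}$ is linear by construction, so $-h''_{a,b}$ vanishes on $(a_k,b_k)$.

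For the reverse inclusion, fix $x\in E$ and an open interval $V\ni x$ in $(l,r)$; the goal is $\mu(V)>0$. The key observation is that $\mu((a,b))>0$ for any $a,b\in E$ with $a<b$: one has $h_{a,b}(a)=h_{a,b}(b)=0$ while $h_{a,b}(y)=\mathbf{E}_y(T_a\wedge T_b)>0$ for every $y\in (a,b)\cap E$, since $T_a\wedge T_b>0$ $\mathbf{P}_y$-a.s.\ by right continuity of paths from $y\neq a,b$; concavity then forces $h'_{a,b}(a+)>0>h'_{a,b}(b-)$, whence $\mu((a,b))=\tfrac12(h'_{a,b}(a+)-h'_{a,b}(b-))>0$. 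Three cases remain. If $x$ accumulates from both sides in $E$, pick $a,b\in E\cap V$ with $a<x<b$ so that $\mu(V)\geq \mu((a,b))>0$. If $x=a_p$ accumulates only from the left (the mirror case $x=b_q$ being similar), pick $a\in E\cap V$ with $a<x$; since $(x,b_p)$ is a gap carrying no $\mu$-mass, $\mu(V)\geq \mu((a,x])=\mu((a,b_p))>0$. Finally if $x=a_p=b_q$ is isolated, setting $a:=a_q$, $b:=b_p$, the function $h_{a,b}$ is affine on $(a,x)$ and on $(x,b)$ with $h_{a,b}(a)=h_{a,b}(b)=0<h_{a,b}(x)$, so the kink at $x$ gives $\mu(\{x\})=\tfrac12 h_{a,b}(x)\bigl(\tfrac{1}{x-a}+\tfrac{1}{b-x}\bigr)>0$, and hence $\mu(V)\geq \mu(\{x\})>0$.

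The main mild technicality is this trichotomy in the support inclusion; the isolated case is pleasing in that it clarifies that holding points in the sense of Corollary~\ref{COR55} correspond exactly to atoms of $\mu$.
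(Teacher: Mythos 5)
Your proof is correct, and it rests on the same mechanism as the paper's: the strict concavity of $h_{a,b}$ at points of $E$, ultimately witnessed by $\mathbf{E}_y(T_a\wedge T_b)>0$. The execution differs, though. The paper argues by contradiction in one stroke: if $\mu((\alpha,\beta))=0$ then $h_{a,b}$ is affine on $[\alpha,\beta]$, and the strict inequality \eqref{eq:511} rules out any point of $E$ inside $(\alpha,\beta)$ --- no case distinction is needed. You instead lower-bound $\mu$ on neighbourhoods directly, using the boundary values $h_{a,b}(a)=h_{a,b}(b)=0$ to force $h'_{a,b}(a+)>0>h'_{a,b}(b-)$, and split into the two-sided, one-sided and isolated cases. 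Your route is longer but more quantitative: in particular the explicit atom $\mu(\{x\})=\tfrac{1}{2}h_{a,b}(x)\bigl(\tfrac{1}{x-a}+\tfrac{1}{b-x}\bigr)$ at isolated points is a nice byproduct that matches the holding-point picture of Corollary~\ref{COR55}. One small overstatement to fix: the ``key observation'' that $\mu((a,b))>0$ for \emph{any} $a,b\in E$ with $a<b$ is false when $(a,b)=(a_k,b_k)$ is a gap (there $h_{a,b}\equiv 0$); the derivation of $h'_{a,b}(a+)>0>h'_{a,b}(b-)$ requires a point of $(a,b)\cap E$, which does exist in each of your three applications, so nothing in the argument actually breaks.
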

\begin{proof}
Clearly,  $\mu$ is a positive Radon measure and $\mu((a_k,b_k))=0$ for any $k\geq 1$.  Take $(\alpha,\beta)\subset (l,r)$ such that $\mu((\alpha,\beta))=0$.  We need to show $$(\alpha,\beta)\cap E=\emptyset,$$ so that the support of $\mu$ is $E$.  To do this take $(a,b)\supset [\alpha, \beta]\supset (\alpha,\beta)$ with $a,b\in E$ and let $h:=h_{a,b}$.  Then $h''=0$ on $(\alpha,\beta)$ implies that there is a constant $k$ depending on $\alpha,  \beta$ such that 
\begin{equation}\label{eq:512}
	h(x)-h(\alpha)=k(x-\alpha),\quad x\in [\alpha,\beta]. 
\end{equation}
Argue by contradiction and suppose $x\in (\alpha, \beta)\cap E$.  Using \eqref{eq:511} and repeating the argument in the second paragraph of the proof of Lemma~\ref{LM511},  we can conclude
\[
	h(x)>\frac{\beta-x}{\beta-\alpha} h(\alpha)+\frac{x-\alpha}{\beta-\alpha} h(\beta).  
\]
This violates \eqref{eq:512}.  That completes the proof.  
\end{proof}

\subsubsection{$l\in E$ but $r\notin E$}\label{SEC532}

Take $b\in E$ with $l<b<r$.  Using $\mathbf{P}_l(T_b<\infty)>0$ and repeating the argument in \cite[Chapter VII,  Proposition~3.1]{RY99},   one has $\mathbf{E}_xT_b<\infty$ for all $x\in [l,b]$ and particularly,  $\mathbf{P}_x(T_b<\infty)=1$. Define
\[
	h_b(x):=\left\lbrace
	\begin{aligned}
	&\mathbf{E}_x(T_b),\quad x\in [l,b]\cap E,  \\
	&\frac{h_{b}(b_k)-h_{b}(a_k)}{b_k-a_k}\cdot (x-a_k)+h_{b}(a_k),\quad x\in (a_k,b_k)\subset (l,b),  k\geq 1.  
	\end{aligned}
	\right.
\]
Mimicking Lemma~\ref{LM511},  we can obtain that $h_b$ is concave on $(l, b)$.  Since $h_b$ is decreasing on $[l,b)$,  the extended function obtained by letting $h_b(x):=h_b(l)$ for $x\leq l$ is concave on $(-\infty,  b)$.  Take another $b'\in E$ such that $b<b'$ and let $h_{b'}$ be the concave function defined on $(-\infty, b')$ analogously.  Then
\[
	h_{b'}(x)=h_b(x)+\mathbf{E}_b(T_{b'}).  
\]
Hence the restriction of $-h''_{b'}$ to $(-\infty, b)$ is identified with $-h''_b$.  Define the speed measure $\mu$ on $(-\infty, r)$ as
\begin{equation}\label{eq:513}
	\mu|_{(-\infty, b)}:=-\frac{1}{2}h''_b,\quad  r>b\in E.  
\end{equation}
As an analogue of Lemma~\ref{LM512},  $\mu$ a positive Radon measure with $\text{supp}[\mu]=E$.  The restriction of $\mu$ to $E$ is still denoted by $\mu$.  

\subsubsection{$l\notin E$ and $r\in E$}\label{SEC533}

As an analogue of $h_b$ in \S\ref{SEC532},  we set a function $h_a$ on $(a,  \infty)$ for $a\in E$ with $l<a<r$:
\[
	h_a(x):=\left\lbrace
	\begin{aligned}
	&\mathbf{E}_x(T_a),\quad x\in [a,r]\cap E,  \\
	&\frac{h_{b}(b_k)-h_{b}(a_k)}{b_k-a_k}\cdot (x-a_k)+h_{b}(a_k),\quad x\in (a_k,b_k)\subset (a,r),  k\geq 1, \\
	&\mathbf{E}_rT_a,\quad x\geq r.  
	\end{aligned}
	\right.
\]
Note that $h_a$ is concave and $-h''_a$ extends consistently to $(l, \infty)$ as $a\downarrow l$.  Define the speed measure as this limit,  i.e. 
\begin{equation}\label{eq:514}
	\mu|_{(a,\infty)}:=-\frac{1}{2}h''_a,\quad l< a\in E. 
\end{equation}
The support of $\mu$ is $E$ and we denote still by $\mu$ the restriction of $\mu$ to $E$.  

\subsubsection{$l, r\in E$}

Let $h_{l,r}:=h_{a,b}$ with $a=l, b=r$ as in \S\ref{SEC531},  $h_r:=h_b$ with $b=r$ as in \S\ref{SEC532} and $h_l:=h_a$ with $a=l$ as in \S\ref{SEC533}.  Then $-h''_{l,r},  -h''_l$ and $-h''_r$ are all positive Radon measures on $(l,r)$,  $[l, r)$ and $(l,r]$ respectively.  

\begin{lemma}
The restrictions of $-h''_{l,r},  -h''_l$ and $-h''_r$ to $(l,r)$ are all identified.  
\end{lemma}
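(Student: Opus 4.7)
The plan is to express each of $h_l$ and $h_r$ as $h_{l,r}$ plus an affine function on $(l,r)$, so that taking second distributional derivatives wipes out the affine discrepancy.

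First I would work on $[l,r]\cap E$ using strong Markov. Since $T_l=(T_l\wedge T_r)+T_l\circ\theta_{T_l\wedge T_r}$ and the second term vanishes on $\{T_l<T_r\}$ while it equals $T_l\circ\theta_{T_r}$ on $\{T_r<T_l\}$, for $x\in [l,r]\cap E$
\begin{equation*}
\mathbf{E}_x T_l = \mathbf{E}_x(T_l\wedge T_r) + \mathbf{P}_x(T_r<T_l)\,\mathbf{E}_r T_l.
\end{equation*}
Because $X$ is on its natural scale, Theorem~\ref{THM58} gives $\mathbf{P}_x(T_r<T_l)=\frac{x-l}{r-l}$. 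Hence on $[l,r]\cap E$,
\begin{equation*}
h_l(x) - h_{l,r}(x) = \frac{\mathbf{E}_r T_l}{r-l}\,(x-l),
\end{equation*}
which is affine in $x$. The same computation with the roles of $l$ and $r$ swapped yields $h_r(x)-h_{l,r}(x)=\frac{\mathbf{E}_l T_r}{r-l}(r-x)$ on $[l,r]\cap E$.

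Next I would propagate these identities to the complementary intervals $(a_k,b_k)$. On each such gap all three functions are defined by \emph{linear} interpolation between their values at the endpoints $a_k,b_k\in E$. Linear interpolation of an affine function is that same affine function, so the identities $h_l-h_{l,r}=\alpha(x-l)$ and $h_r-h_{l,r}=\beta(r-x)$ continue to hold on every $(a_k,b_k)\subset (l,r)$, hence on all of $(l,r)$, with constants $\alpha,\beta$ as above.

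Finally, I take second distributional derivatives. An affine function has vanishing second derivative in the sense of distributions, so on $(l,r)$
\begin{equation*}
h_l'' = h_{l,r}'' = h_r''
\end{equation*}
as distributions, hence as signed Radon measures; multiplying by $-\tfrac12$ gives the claim. The only mildly delicate point is that $h_l$ and $h_r$ are a priori defined on $[l,r)$ and $(l,r]$ respectively while $h_{l,r}$ lives on $(l,r)$; but the identifications above are pointwise on $(l,r)\cap E$ and then extend by linear interpolation on the gaps, so no boundary subtlety interferes with restricting the second derivatives to the open interval $(l,r)$. I do not foresee a substantial obstacle; the whole argument rests on the strong Markov decomposition and the concavity/affinity bookkeeping already established in Lemmas~\ref{LM54} and \ref{LM511}.
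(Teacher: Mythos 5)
Your proposal is correct and follows essentially the same route as the paper: the paper likewise writes $h_l(x)=\mathbf{E}_x(T_l\wedge T_r)+\mathbf{P}_x(T_r<T_l)\,\mathbf{E}_rT_l$ via the strong Markov property, uses the natural scale to identify $\mathbf{P}_x(T_r<T_l)=\frac{x-l}{r-l}$, and concludes that $h_l-h_{l,r}$ is affine, so the second distributional derivatives coincide on $(l,r)$. Your extra remark that the affine discrepancy survives the linear interpolation on the gaps $(a_k,b_k)$ is a point the paper leaves implicit, but it is the same argument.
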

\begin{proof}
Take $x\in (l,r)$.  We have $h_l(x)=\mathbf{E}_x(T_l; T_l<T_r)+\mathbf{E}_x(T_l; T_l>T_r)$.  Note that
\[
\begin{aligned}
	\mathbf{E}_x(T_l; T_l>T_r)&=\mathbf{E}_x(T_r+T_l\circ \theta_{T_r}; T_l>T_r) \\
	&=\mathbf{E}_r(T_r; T_l>T_r)+\mathbf{E}_rT_l \cdot \frac{x-l}{r-l}.  
\end{aligned}\]
It follows that $h_l(x)=\mathbf{E}_x(T_l\wedge T_r)+\mathbf{E}_rT_l \cdot \frac{x-l}{r-l}=h_{l,r}(x)+\mathbf{E}_rT_l \cdot \frac{x-l}{r-l}$.  Hence $-h''_l=-h''_{l,r}$ on $(l,r)$.  Another identity $-h''_r=-h''_{l,r}$ on $(l,r)$ can be obtained similarly.  That completes the proof. 
\end{proof}

Due to this lemma we define the speed measure $\mu$ on $[l,r]$ as follows:
\begin{equation}\label{eq:515}
	\mu|_{[l,r)}:=-\frac{1}{2}h''_r,\quad \mu|_{(l,r]}:=-\frac{1}{2}h''_l. 
\end{equation}
Clearly $\mu$ is a positive Radon measure with $\text{supp}[\mu]=E$. 

\subsubsection{Summary}

We summarize the definition of speed measure for an SFH as follows.  

\begin{definition}\label{DEF514}
\begin{itemize}
\item[(1)] Let $X$ be an SFH on its natural scale, i.e. $\bs(x)=x$. The measure $\mu$ defined as \eqref{eq:510},  \eqref{eq:513},  \eqref{eq:514} and \eqref{eq:515} for the cases mentioned above respectively is called the \emph{speed measure} of $X$. 
\item[(2)] For an SFH $X$ with scale function $\bs$,  let $\tilde{\mu}$ be the speed measure of $\tilde{X}=(\bs(X_t))_{t\geq 0}$.  Then the image measure $\mu:=\tilde\mu\circ \bs^{-1}$ of $\tilde\mu$ under the map $\bs$ is called the \emph{speed measure} of $X$.  
\end{itemize}
\end{definition}

In general the speed measure of an SFH $X$ is a positive Radon measure on $E$ with full support.  As an extension of the classical characterization for regular diffusions,  it will turn out in Theorem~\ref{THM71} that an SFH on its natural scale can be always constructed from a Brownian motion,  by way of a time change using this speed measure.  Particularly,  an SFH is uniquely determined by its scale function and speed measure;  see Corollary~\ref{COR64}.  

\section{Quasidiffusions}\label{SEC6}

This section is devoted to introducing a special family of SFHs that have been widely studied.  

\subsection{Quasidiffusions as standard processes}\label{SEC61}

Let $m$ be an extended real valued, right continuous,  (not necessarily strictly) increasing and non-constant function on $\bR$,  and set $m(\infty):=\lim_{x\uparrow \infty}m(x),  m(-\infty):=\lim_{x\downarrow -\infty}m(x)$.   Put 
\begin{equation}\label{eq:51-2}
\begin{aligned}
	&l_0:=\inf\{x\in \bR: m(x)>-\infty\},\quad r_0:=\sup\{x\in \bR: m(x)<\infty\},  \\
	&l:=\inf\{x>l_0: m(x)>m(l_0)\},\quad r:=\sup\{x<r_0: m(x)<m(r_0-)\}.  
\end{aligned}\end{equation}
To avoid trivial case assume that $l<r$.  Define
\[
E_m:=\{x\in [l,r]\cap (l_0,r_0): \exists \varepsilon>0\text{ s.t. }m(x-\varepsilon)<m(x+\varepsilon)\}.  
\]
The function $m$ corresponds to a measure on $\bR$,  still denoted by $m$ if no confusions caused.  

\begin{lemma}\label{LM51}
The set $E_m$ is a nearly closed subset of $\bR$ ended by $l$ and $r$, and the restriction of $m$ to $E_m$ is a  positive Radon measure with full support.
\end{lemma}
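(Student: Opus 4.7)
The approach is to view $E_m$ as the topological support of the Lebesgue--Stieltjes measure $dm$ intersected with the canonical interval $[l,r]\cap(l_0,r_0)$ (reading the defining condition as holding for every $\varepsilon>0$, which is the only way the full-support claim can hold); all three assertions then reduce to standard properties of supports of measures together with a careful case analysis at the endpoints $l,r$.

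First, I would check that $\overline{E_m}:=E_m\cup\{l,r\}$ is closed in $\overline{\bR}$. Since $E_m\subset[l,r]$, the complement consists of the (open) pieces $[-\infty,l)$ and $(r,+\infty]$ together with $(l,r)\setminus E_m$. For any $x\in(l,r)\setminus E_m$, the failure of the defining condition produces an open neighborhood $(x-\varepsilon,x+\varepsilon)\subset(l,r)$ on which $m$ is constant; this whole neighborhood is then disjoint from $E_m$, so $(l,r)\setminus E_m$ is open.

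Second, I would verify $\inf E_m=l$ and $\sup E_m=r$. Clearly $\inf E_m\ge l$ from $E_m\subset[l,r]$. For the reverse, the up-set property of $\{y>l_0:m(y)>m(l_0)\}$ together with $l$ being its infimum forces $m(y)=m(l_0)$ for $y\in(l_0,l)$ and $m(y)>m(l_0)$ for $y>l$. If $l>l_0$, then for $\varepsilon$ small, $m(l-\varepsilon)=m(l_0)<m(l+\varepsilon)$, and in fact the inequality persists for every $\varepsilon>0$, so $l\in E_m$. If $l=l_0$, then $l\notin E_m$ (since $E_m\subset(l_0,r_0)$); but for every $\delta>0$, $m$ is non-constant on $(l,l+\delta)$, so $dm$ charges this interval, its support meets it, and any such support point lies in $E_m$, giving $\inf E_m\le l+\delta$. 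The argument for $\sup E_m=r$ is symmetric.

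Third, $m|_{E_m}$ is the positive Borel measure $A\mapsto dm(A\cap E_m)$. For local finiteness, any compact $K\subset E_m$ has $\inf K,\sup K\in K\subset(l_0,r_0)$, so $K\subset[a,b]\subset(l_0,r_0)$ and $m|_{E_m}(K)\le dm([a,b])=m(b)-m(a-)<\infty$. For full support, fix $x\in E_m$ and an open $U\ni x$ in $\bR$; choose $\varepsilon>0$ with $I:=(x-\varepsilon,x+\varepsilon)\subset U\cap[l,r]\cap(l_0,r_0)$. The defining condition yields $dm(I)>0$, and since the complement of the support of $dm$ inside $I$ is open and carries no $dm$-mass, the whole mass lies in $E_m$: $m|_{E_m}(U\cap E_m)\ge dm(I\cap E_m)=dm(I)>0$. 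The main obstacle is not the core argument but the bookkeeping at the endpoints: one must distinguish whether $l=l_0$ or $l>l_0$ (and symmetrically for $r$ versus $r_0$), because this governs whether $l,r$ themselves belong to $E_m$ and hence whether the infimum and supremum are attained.
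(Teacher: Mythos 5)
Your proof is correct and takes essentially the same approach as the paper: the complement of $E_m$ in $(l,r)$ is open because $m$ is locally constant there, the endpoints are handled by distinguishing $l=l_0$ from $l>l_0$ (and symmetrically for $r$), and the Radon and full-support claims follow directly from the definition (the paper dismisses these as obvious). Two small remarks: your reading of the defining condition with a universal quantifier over $\varepsilon$ is indeed the intended one (the paper's own proof negates it to ``$m(x-\varepsilon)=m(x+\varepsilon)$ for some $\varepsilon>0$'', which is the negation of the $\forall$-version), and in the full-support step the requirement $I\subset[l,r]$ cannot be met when $x=l$ or $x=r$ belongs to $E_m$, but $I\subset U\cap(l_0,r_0)$ suffices there because $dm$ does not charge $(l_0,l)\cup(r,r_0)$.
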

\begin{proof}
Let $\overline{E}_m:=E_m\cup \{l,r\}$.  For $x\in [l,r]\setminus \overline E_m$,  $m(x-\varepsilon)=m(x+\varepsilon)$ for some $\varepsilon>0$.  Hence $(x-\varepsilon,x+\varepsilon)\subset [l,r]\setminus \overline{E}_m$ whenever $x-\varepsilon>l$ and $x+\varepsilon<r$.  As a result,  $\overline{E}_m$ is closed in $\overline{\bR}$.   If $l\notin E_m$,  then we must have $l=l_0$ and $l$ can be approximated by points in $E_m$ from the right.  Analogical fact holds for $r$.  Since $E_m\subset \bR$, we can eventually conclude that $E_m\in \mathscr K$ and $E_m$ is ended by $l$ and $r$.  The second assertion for $m$ is obvious.  That completes the proof. 
\end{proof}
\begin{remark}
There may appear various cases for the positions of $r_0$ and $r$:
\begin{itemize}
\item[(1)] $r=r_0=\infty$ whenever $m(\infty)\leq  \infty$ and $m(x)<m(\infty)$ for any $x<\infty$.
\item[(2)] $r<r_0=\infty$ whenever $m(x)=m(\infty)<\infty$ for $x\in [r, \infty)$ and $m(x)<m(r)$ for $x<r$.  In this case $r\in E_m$.  
\item[(3)] $r=r_0<\infty$ whenever $m(x)=\infty$ for $x\in [r,\infty)$ and $m(x)<m(r-)$ for $x<r$.  In this case $r\notin E_m$.  
\item[(4)] $r<r_0<\infty$ whenever $m(x)=\infty$ for $x\in [r_0,\infty)$,  $m(x)=m(r)<\infty$ for $x\in [r, r_0)$ and $m(x)<m(r-)$ for $x<r$.  In this case $r\in E_m$.  
\end{itemize}
The positions of $l_0$ and $l$ can be argued similarly.  
\end{remark}

Let $W=(W_t, \mathscr{F}^W_t, \mathbf{P}_x)$ be a Brownian motion on $\bR$ and $\ell^W(t,x)$ be its local time normalized such that for any bounded Borel measurable function $f$ on $\bR$ and $t\geq 0$, 
\[
	\int_0^t f(W_s)ds=2\int_\bR \ell^W(t,x)f(x)dx.  
\]
Define $S_t:=\int_{\bR} \ell^W(t,x)m(dx)$ for $t\geq 0$ and
\[
	T_t:=\inf\{u>0: S_u>t\},\quad t\geq 0.  
\]
Then $T=(T_t)_{t\geq 0}$ is a strictly increasing (before $\zeta$ defined as below), right continuous family of $\sF_t^W$-stopping times with $T_0=0$,  $\mathbf{P}_x$-a.s.   Define
\[
	\sF_t:=\sF^W_{T_t},\quad \zeta:=\inf\{t>0: W_{T_t}\notin (l_0,r_0)\},\quad X_t:=W_{T_t}, \; 0\leq t<\zeta.  
\]
Then $\{X_t, \sF_t, (\mathbf{P}_x)_{x\in E_m},\zeta\}$,  called \emph{a quasidiffusion with speed measure $m$},  is a \emph{standard process} with state space $E_m$ and lifetime $\zeta$; see \cite{K86}.   The definition of standard process is referred to in,  e.g.,  \cite{BG68}.  

\begin{lemma}\label{LM515}
Let $\tau:=\inf\{t>0: W_t\notin (l_0,r_0)\}$.  Then $\zeta=S_{\tau-}$,  $\mathbf{P}_x$-a.s.,  $x\in E_m$.  Furthermore,  the following hold:
\begin{itemize}
\item[(1)] When $\tau=\infty$,  i.e.  $|l_0|=|r_0|=\infty$, 
\[
\begin{aligned}
	 &S_u<\infty \;  (u<\infty),  \quad \zeta=\lim_{u\uparrow \infty}S_u=\infty;  \\
	 &T_t<\infty\;  (t<\infty), \quad T_\infty:=\lim_{t\uparrow \infty}T_t=\infty.  
\end{aligned}\]
\item[(2)] When $\tau<\infty$,  i.e.  $|l_0|<\infty$ or $|r_0|<\infty$,
\[
\begin{aligned}
	& S_u<\infty\; (u<\tau),\quad S_u=\infty\; (u>\tau),\quad \zeta=S_{\tau-}\leq \infty;  \\
	& T_t<\tau\;  (t<\zeta),\quad T_t=\tau\;  (t\geq \zeta\text{ if }\zeta<\infty).  
\end{aligned}
\]
\end{itemize}
\end{lemma}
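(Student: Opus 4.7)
The plan is to prove the identity $\zeta = S_{\tau-}$ by characterizing exactly when $T_t < \tau$, and to extract the finiteness/infinitude assertions in (1) and (2) as direct consequences. The argument splits into three stages, which I would run for cases (1) and (2) in parallel.

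First I would record two preliminary facts. Joint continuity of Brownian local time $(u,x)\mapsto \ell^W(u,x)$ combined with dominated/monotone convergence yields that $u\mapsto S_u$ is continuous and nondecreasing wherever it is finite; consequently $T_t=\inf\{u:S_u>t\}$ is the right-continuous generalized inverse, and one has the elementary equivalence $T_t<\tau \iff (\exists u<\tau,\ S_u>t) \iff t<S_{\tau-}$.

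Second I would pin down the finiteness profile of $S$. For any $u<\tau$, the Brownian path $W_{[0,u]}$ lies in a compact set $K\subset(l_0,r_0)$, and since $m$ is Radon on $(l_0,r_0)$ while $\ell^W(u,\cdot)$ is bounded on $K$, we get $S_u<\infty$. In case (1), $l_0=-\infty$, $r_0=\infty$, and one-dimensional recurrence gives $\ell^W(u,x)\uparrow\infty$ for every $x$; nontriviality of $m$ on $E_m$ together with monotone convergence then forces $S_u\uparrow\infty$, whence $T_t<\infty$ for every $t$ and $T_\infty=\infty$. In case (2), say $W_\tau=r_0<\infty$; by strong Markov at $\tau$ the shifted path $(W_{\tau+s})_{s\geq 0}$ is a Brownian motion from $r_0$, so $\ell^W(u,r_0)>\ell^W(\tau,r_0)\geq 0$ for every $u>\tau$, and joint continuity in $x$ gives a random $\varepsilon>0$ with $\ell^W(u,x)\geq \tfrac12 \ell^W(u,r_0)$ on $(r_0-\varepsilon,r_0+\varepsilon)$. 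By the definition of $r_0$ in \eqref{eq:51-2}, $m((r_0,r_0+\varepsilon))=\infty$, hence $S_u=\infty$ for $u>\tau$.

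Finally, combining the above with the preliminary equivalence closes the argument. For $t<S_{\tau-}$, one has $T_t<\tau$ and continuity of $W$ gives $W_{T_t}\in(l_0,r_0)$. For $t\geq S_{\tau-}<\infty$, monotonicity forces $T_t\geq\tau$ while $S_u=\infty$ for $u>\tau$ forces $T_t\leq\tau$; hence $T_t=\tau$ and $W_{T_t}=W_\tau\in\{l_0,r_0\}$. This delivers $\zeta=S_{\tau-}$ a.s., and the residual items in (1) and (2) are immediate. The main obstacle is the lower bound $S_u=\infty$ for $u>\tau$: it chains positivity of the post-$\tau$ local-time increment at the exited endpoint (via strong Markov), joint continuity of $(u,x)\mapsto\ell^W(u,x)$ to propagate that positivity to a neighborhood, and the infinite-mass identity $m((r_0,r_0+\varepsilon))=\infty$ read off from the definition of $r_0$; the symmetric case $W_\tau=l_0$ is identical, and a possible atom of $m$ at $r_0$ plays no role since the infinite mass sits strictly beyond.
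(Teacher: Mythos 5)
Your overall strategy matches the paper's: establish the finiteness profile of $S$ (finite strictly before $\tau$, infinite strictly after), read off the behaviour of $T_t$ from the generalized inverse, and deduce $\zeta=S_{\tau-}$. The generalized-inverse bookkeeping, the compactness argument for $S_u<\infty$ when $u<\tau$ (a clean alternative to the paper's appeal to \cite[Chapter X, Corollary~2.10]{RY99} plus localization), and the recurrence argument in case (1) are all fine.

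There is, however, a genuine error in the key step $S_u=\infty$ for $u>\tau$. You claim that the definition of $r_0$ in \eqref{eq:51-2} gives $m\bigl((r_0,r_0+\varepsilon)\bigr)=\infty$ and explicitly dismiss the atom at $r_0$ because ``the infinite mass sits strictly beyond.'' This is backwards. Since $m$ is \emph{right continuous} and $m(x)=\infty$ for all $x>r_0$, right continuity forces $m(r_0)=\infty$ whenever $r_0<\infty$; consequently the increments of $m$ over $(r_0,r_0+\varepsilon)$ are of the form $\infty-\infty$ and the measure $m(dx)$ carries no usable mass strictly to the right of $r_0$. The actual dichotomy (the mirror image of the one the paper states for $l_0$, namely ``$m(\{l_0\})=\infty$ or $m((l_0,l_0+\delta))=\infty$'') is: either $m(r_0-)<\infty$, in which case the infinite mass is exactly the atom $m(\{r_0\})=m(r_0)-m(r_0-)=\infty$ you discard, or $m(r_0-)=\infty$, in which case $m\bigl((r_0-\delta,r_0)\bigr)=\infty$ for every $\delta>0$, i.e.\ the mass accumulates strictly \emph{below} $r_0$. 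As written, your lower bound $S_u\geq \tfrac12\,\ell^W(u,r_0)\,m\bigl((r_0,r_0+\varepsilon)\bigr)$ therefore gives nothing. The repair is immediate and uses what you already proved: for $u>\tau$ you have $\ell^W(u,\cdot)\geq \tfrac12\,\ell^W(u,r_0)>0$ on the two-sided neighbourhood $(r_0-\varepsilon,r_0+\varepsilon)$, so integrating over $[r_0-\varepsilon,r_0]$ --- which carries infinite $m$-mass in both subcases --- yields $S_u=\infty$. The same correction applies to the ``identical'' case $W_\tau=l_0$, where the infinite mass sits at $\{l_0\}$ or in $(l_0,l_0+\delta)$, never in $(l_0-\varepsilon,l_0)$.
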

\begin{proof}
Consider first that $\tau=\infty$.  Clearly $m$ is a Radon measure on $\bR$.  Hence \cite[Chapter X,  Corollary~2.10]{RY99} yields $S_u<\infty$ for $u<\infty$.  Since $\ell^W(\infty,x)=\infty$ for any $x\in \bR$,  it follows that $\lim_{u\uparrow \infty}S_u=\infty$.  Particularly $T_t<\infty$ for $t<\infty$ and $T_\infty=\infty$.  Note that $\zeta=\inf\{t>0: T_t=\infty\}$ by its definition.  As a result,  $\zeta=\infty$.  

Now consider $\tau<\infty$.  Without loss of generality we only treat the case $l_0>-\infty,  r_0=\infty$ and $W_\tau=l_0$.  The other cases can be argued similarly.  
The fact $S_u<\infty$ for $u<\tau$ can be obtained by means of \cite[Chapter X, Corollary~2.10]{RY99} when the zero extension of $m|_{(l_0,\infty)}$ to $\bR$ is a Radon measure.  Otherwise set $\tau_n:=\inf\{t>0: W_t\notin (l_n,\infty)\}$,  where $l_n\downarrow l_0$,  and  \cite[Chapter X, Corollary~2.10]{RY99} implies $S_u<\infty$ for $u<\tau_n$.  Since $\tau_n\uparrow \tau$,  it follows that $S_u<\infty$ for $u<\tau$.  
To prove $S_{\tau+\varepsilon}=\infty$ for $\varepsilon>0$,  note that $l_0>-\infty$ must lead to $m(\{l_0\})=\infty$ or $m((l_0, l_0+\delta))=\infty$ for $\delta>0$.   Since $W_0=x,  W_\tau=l_0$,  it follows that $\ell^W(\tau+\varepsilon, y)>0$ for any $y\in [l_0,  x)$.  Using the continuity of $\ell^W(\tau+\varepsilon, \cdot)$,  we get
\[
	S_{\tau+\varepsilon}\geq \int_{[l_0,l_0+\delta)} \ell^W(\tau+\varepsilon, y)m(dy)=\infty.  
\]
Particularly $T_t<\tau$ for $t<S_{\tau-}$ and for $t\geq S_{\tau-}$ (if $S_{\tau-}<\infty$),  $T_t=\tau$.  Finally it suffices to prove $\zeta=S_{\tau-}$.  In fact,  $T_t<\tau$ for $t<S_{\tau-}$ implies $\zeta\geq S_{\tau-}$.  To the contrary we have $T_t=\tau$ if $t:=S_{\tau-}<\infty$.  Hence $W_{T_t}=l_0$, as leads to $\zeta\leq S_{\tau-}$.  Therefore $\zeta=S_{\tau-}$ is eventually concluded.  
That completes the proof. 
\end{proof}

A quasidiffusion satisfies (SF) and (SR) in Definition~\ref{DEF51}; see \cite{G75, K86,  BK87}.  
Attaching the ceremony $\partial$ to $E_m$ by the same way as in \S\ref{SEC51},  one can find in Theorem~\ref{THM71} that a quasidiffusion is,  in fact,  a Hunt process on $E_m$. 
However it admits killing at $l$ or $r$ whenever $-\infty<l_0<l$ or $r<r_0<\infty$; see Remark~\ref{RM73}.  To bar this possibility we will assume in the next section that 
\begin{itemize}
\item[(QK)] If $l_0>-\infty$ (resp.  $r_0<\infty$),  then $l=l_0$  (resp.  $r=r_0$).  
\end{itemize}
In other words,  under the assumption \text{(QK)},  a quasidiffusion is an SFH on $E_m$. 

\subsection{Markov local times}

Define 
\[
\begin{aligned}
&\ell(t,x):=\ell^W(T_t,x),\quad 0\leq t<\zeta, x\in E_m,  \\
&\ell(t,x):=\lim_{s\uparrow \zeta}\ell(s,x),\quad t\geq \zeta,  x\in E_m,
\end{aligned}
\]
which are called the \emph{local times} of $X$ in \cite{BK87}.  To avoid ambiguity with semimartingale local times,  we call them the \emph{Markov local times} of $X$.  Note that $\ell$ is jointly continuous in $t$ and $x$, and $\ell(\cdot, x)$ increases at $t$ if and only if $T_t<T_\infty$ and $X_t=x$ or $X_{t-}=x$.  Moreover,  by virtue of \cite[Chapter V,  Proposition~1.4]{RY99},  it holds a.s.  that for any bounded measurable function $f$ and $0\leq t<\zeta$,
\begin{equation}\label{eq:61}
\int_0^t f(X_s)ds=\int_{E_m}\ell(t,x)f(x)m(dx);
\end{equation}
see also \cite[(2.1)]{BK87}.  In view of Lemma~\ref{LM515},  $T_t\leq \tau$ for any $t\geq 0$.  We set $\ell(t,j):=\ell^W(T_t,j)\equiv 0$ for $j=l_0$ or $r_0$ whenever $l_0$ or $r_0$ is finite.  

\begin{remark}
When $\tau<\infty$ and $\zeta<\infty$,  we have $\ell(\zeta,x)=\ell^W(T_\zeta, x)$ for $x\in E_m$ even if $T_{\zeta-}<T_\zeta$.  In fact,  for $u\in (T_{\zeta-}, T_\zeta)$,  $S_u$ is constant and hence $W_u\notin E_m$.  This yields $\ell^W(T_{\zeta-},x)=\ell^W(T_\zeta, x)$ for $x\in E_m$.  Particularly,  $\ell(\zeta,x)=\ell^W(T_\zeta,x)$.  
\end{remark}

As a time change of Brownian motion,  $X$ is always a semimartingale by letting $X_t:=W_{T_\zeta}$ for $t\geq \zeta$ if $\zeta<\infty$; see,  e.g., \cite{M78}.  In what follows we present its semimartingale decomposition obtained in \cite{BK87}. 
Let $\tilde{E}_m:=E_m\cup \left(\{l_0,r_0\}\cap \bR\right)$,  which is closed in $\bR$.  Then $\bR\setminus \tilde{E}_m$ is the union of mutually disjoint open intervals $I_k:=(a_k,b_k)$,  where $k$ runs through a subset $K$ of $\bN$ such that $0\in K$ (resp.  $1\in K$) if and only if the left (resp.  right) endpoint point of some interval is $-\infty$ (resp.  $\infty$).   Set $I_0:=(-\infty,  b_0)$ (resp.  $I_1:=(a_1,\infty)$) whenever $0\in K$ (resp.  $1\in K$), and $K':=K\setminus \{0,1\}$.  Note that $0\in K$ amounts to either $l_0>-\infty$ or $l>l_0=-\infty$.  In the former case $b_0=l_0$ and in the latter case $b_0=l$.  This holds analogously for $1\in K$.  For convenience we make the convention $b_0:=-\infty$ or $a_1:=\infty$ if $0\notin K$ or $1\notin K$,  and $\ell(\cdot, \pm\infty):=0$.  
 Let $A_{k,i}$ (resp.  $B_{k,i}$) be the time of the $i$-th jump of $X$ from $a_k$ to $b_k$ (resp. from $b_k$ to $a_k$) if this jump occurs,  otherwise let it be equal to $\infty$.  For $k\in K'$ put
\[
	A_k(t):=\sum_{i=1}^\infty 1_{[A_{k,i},\infty)}(t),\quad B_k(t):=\sum_{i=1}^\infty 1_{[B_{k,i},\infty)}(t),\quad t\geq 0,
\]
i.e.  the number of jumps from $a_k$ to $b_k$ or from $b_k$ to $a_k$ before time $t$.  The following result is due to \cite{BK87}.  

\begin{theorem}\label{THM517}
\begin{itemize}
\item[\rm(1)] For $k\in K'$,  $A_k(t)-\lambda_k \ell(t,a_k)$ and $B_k(t)-\lambda_k \ell(t,b_k)$ are local $\sF_t$-martingales,  where $\lambda_k:=(b_k-a_k)^{-1}$.  
\item[\rm(2)] $X$ is an $\sF_t$-semimartingale and admits the representation $X=M^c+M^d+A$,  where 
\[
	M^c_t:=\int_0^{T_t} 1_{\tilde{E}_m}(W_s)dW_s
\]
is a continuous local martingale,  
\[
	M^d_t:=\sum_{k\in K'}\left([(b_k-a_k)A_k(t)-\ell(t,a_k)]-[(b_k-a_k)B_k(t)-\ell(t,b_k)] \right)
\]
is a purely discontinuous local martingale and $A_t=\ell(t,b_0)-\ell(t,a_1)$ is the adapted continuous process with locally integrable variation.  
\end{itemize}
\end{theorem}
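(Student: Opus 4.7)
The plan is to establish (1) via Brownian excursion theory, and then to deduce (2) by an It\^o--Tanaka decomposition gap by gap. Throughout, observe that $(T_t)$ is a family of $\{\sF^W_t\}$-stopping times with $\sF_t=\sF^W_{T_t}$, so evaluating any local $\{\sF^W_t\}$-martingale at $T_\cdot$ yields a local $\{\sF_t\}$-martingale.

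For (1), fix $k\in K'$ and note that each jump of $X$ from $a_k$ to $b_k$ is in bijection with an excursion of $W$ above $a_k$ that reaches height $b_k-a_k$. By It\^o's excursion theory, these excursions form a Poisson point process indexed by the Brownian local time $\ell^W(\cdot,a_k)$, whose intensity measure assigns mass $\lambda_k=(b_k-a_k)^{-1}$ to the set of excursions with maximum at least $b_k-a_k$ (the constant matches the normalization $\int f(W_s)ds=2\int \ell^W(t,x)f(x)dx$). Hence the Brownian-time counting process $N^{(a_k,b_k)}_u$ of such upcrossings satisfies that $N^{(a_k,b_k)}_u-\lambda_k\ell^W(u,a_k)$ is a local $\{\sF^W_u\}$-martingale, and time-changing by $T_t$ gives $A_k(t)-\lambda_k\ell(t,a_k)$ as a local $\{\sF_t\}$-martingale. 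The argument for $B_k(t)-\lambda_k\ell(t,b_k)$ is symmetric (excursions below $b_k$).

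For (2), I would apply the It\^o--Tanaka formula to the Lipschitz ramp $g_k(x):=(x-a_k)^+-(x-b_k)^+$ (whose a.e.\ derivative is $1_{(a_k,b_k)}$) to obtain
\[
g_k(W_{T_t})-g_k(W_0)=\int_0^{T_t}1_{(a_k,b_k)}(W_s)\,dW_s+\ell(t,a_k)-\ell(t,b_k).
\]
Since $W_0,W_{T_t}\in\tilde E_m$ lie outside the open gap $(a_k,b_k)$, a direct bookkeeping of net crossings gives $g_k(W_{T_t})-g_k(W_0)=(b_k-a_k)(A_k(t)-B_k(t))$; rearranging yields
\[
\int_0^{T_t}1_{(a_k,b_k)}(W_s)\,dW_s=\bigl[(b_k-a_k)A_k(t)-\ell(t,a_k)\bigr]-\bigl[(b_k-a_k)B_k(t)-\ell(t,b_k)\bigr],
\]
which is precisely the $k$-th summand of $M^d$ and is a local martingale by (1). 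For the (at most two) unbounded gaps $I_0=(-\infty,b_0)$ and $I_1=(a_1,\infty)$, Tanaka's formula applied to $(b_0-x)^+$ and $(x-a_1)^+$, combined with $W_0,W_{T_t}\in\tilde E_m$ (so these functions vanish there), gives $\int_0^{T_t}1_{I_0}(W_s)\,dW_s=\ell(t,b_0)$ and $\int_0^{T_t}1_{I_1}(W_s)\,dW_s=-\ell(t,a_1)$. Splitting $W_{T_t}-W_0=\int_0^{T_t}dW_s$ according to $\bR=\tilde E_m\sqcup\bigsqcup_k I_k$ and summing yields the decomposition $X=M^c+M^d+A$ with the stated pieces.

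The main obstacle is justifying that the formal series defining $M^d$ actually converges as a local martingale (and is purely discontinuous). Since the integrands $1_{I_k}$ have pairwise disjoint support, the integrals $\int_0^{T_t}1_{I_k}(W_s)\,dW_s$ are pairwise orthogonal $\{\sF^W_u\}$-martingales, and orthogonality is preserved under the time change. Their total predictable quadratic variation is dominated by $\int_0^{T_t}1_{\tilde E_m^c}(W_s)\,ds$, which is finite for $t<\zeta$ by Lemma~\ref{LM515} (and vanishes in $m$-measure since $m$ charges none of the gaps); this furnishes the required local $L^2$ control. Purely-discontinuous character then follows because each summand of $M^d$ is a compensated pure-jump process, so $[M^c,M^d]=0$ in the limit.
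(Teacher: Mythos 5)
The paper does not prove this theorem: it is quoted directly from \cite{BK87}. Your reconstruction follows what is essentially the classical route --- excursion compensators for (1), a gap-by-gap Tanaka identity summed over the components of $\bR\setminus\tilde E_m$ for (2) --- and the individual identities check out. With the paper's normalization $\ell^W=\tfrac12 L^W$ the excursion intensity does come out to $\lambda_k$; the ramp function gives $g_k(X_t)-g_k(X_0)=(b_k-a_k)(A_k(t)-B_k(t))$ (the same bookkeeping the paper performs around \eqref{eq:517} in the proof of Lemma~\ref{LM65}); and the one-sided Tanaka identities for the two unbounded gaps produce exactly $A_t=\ell(t,b_0)-\ell(t,a_1)$. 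You are also right that (1) cannot be extracted from the Tanaka computation alone (which only controls the difference of the two compensated counting processes), so the separate excursion argument is genuinely needed.

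The one real gap is the opening assertion that ``evaluating any local $\{\sF^W_t\}$-martingale at $T_\cdot$ yields a local $\{\sF_t\}$-martingale.'' This is false in general: if $T_t$ is the right inverse of the running maximum of $W$, then $W_{T_t}=t$. Time change preserves the local martingale property either for $T$-continuous martingales (\cite[Chapter~V, Proposition~1.5]{RY99} --- this covers $M^c$, whose bracket is frozen on each interval $(T_{t-},T_t)$, and is exactly how the paper argues in Lemma~\ref{LM65}), or else after an honest integrability argument. But the processes you need in (1) and (2) --- the compensated crossing counts and $\int_0^{\cdot}1_{I_k}(W_s)\,dW_s$ --- are precisely \emph{not} $T$-continuous: they move during the crossing intervals $(T_{t-},T_t)$, which is why their time changes are discontinuous. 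The pathwise finiteness of the quadratic variation that you invoke at the end does not rescue this (in the counterexample $\langle W\rangle_{T_t}=T_t<\infty$ as well); ``local $L^2$ control'' requires a localizing sequence of $\sF_t$-stopping times along which expectations are finite. The fix is standard: take $\sigma_n:=T_{c_n}\wedge T_{d_n}$ with finite $c_n,d_n\in E_m$, $c_n\downarrow l$, $d_n\uparrow r$, so that $T_{\sigma_n}$ is dominated by the Brownian exit time of a bounded interval and $\mathbf{E}_x T_{\sigma_n}<\infty$; then each integral stopped at $T_{\sigma_n}$ is an $L^2$-bounded martingale, optional sampling at $T_{t\wedge\sigma_n}$ applies, and $(\sigma_n)$ localizes all the time-changed processes simultaneously. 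With that insertion your argument is complete.
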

\begin{remark}\label{RM64}
\begin{itemize}
\item[(1)]  We should emphasize that,  as a standard process,  $X_t$ is defined as the ceremony $\partial$ for $t\geq \zeta$.  However viewed as a semimartingale in this theorem,  $X_t$ is by definition equal to $W_{T_\zeta}$ for $t\geq \zeta$.  To see this difference,  consider the special case $-\infty<l_0<l<r<r_0<\infty$.  The lifetime $\zeta$ is the first time of $(W_{T_t})_{t\geq 0}$ leaving $(l_0,r_0)$ and $W_{T_t}=W_{T_\zeta}=l_0$ or $r_0$ for $t\geq \zeta$ corresponding to $W_{T_{\zeta-}}=l$ or $r$.  As a standard process,  $X_{\zeta-}=W_{T_{\zeta-}}=l$ or $r$ while $X$ jumps to the ceremony $\partial$ at lifetime $\zeta$.  To be more exact,  the semimartingale in this theorem should be $(W_{T_t})_{t\geq 0}$ and the quasidiffusion $X$ is obtained by killing it at $\zeta$.  
\item[(2)] Note that $0\notin K$ (resp.  $1\notin K$) amounts to $l_0=l=-\infty$ (resp.  $r=r_0=\infty$).  
Suppose $0\in K$ (resp.  $1\in K$).  Then $\ell(\cdot, b_0)\equiv 0$ (resp.  $\ell(\cdot, a_1)\equiv 0$) if and only if $b_0=l_0>-\infty$ (resp.  $a_1=r_0<\infty$).  Clearly $l_0>-\infty$ (resp. $r_0<\infty$) implies $b_0=l_0$ (resp.  $a_1=r_0$).   Particularly,  $X$ is a local martingale,  if and only if 
\[
\text{neither }-\infty=l_0<l \text{ nor }r<r_0=\infty. 
\]
 For $k\in K'$,  $\ell(\cdot, a_k)\equiv 0$ (resp.  $\ell(\cdot,b_k)\equiv 0$),  if and only if $-\infty<l_0=a_k<b_k=l$ (resp.  $r=a_k<b_k=r_0<\infty$);   this is impossible if \text{(QK)} is assumed.  
\item[(3)] Recall that in \eqref{eq:51} we use another index set $\{k: k\geq 1\}$ to decompose the open set $[l,r]\setminus \overline{E}$.  When \text{(QK)} is assumed and $E=E_m$,  the sequence of open intervals in \eqref{eq:51} is identified with $\{I_k, k\in K'\}$.  In addition,  $0\in K$ (resp.  $1\in K$) if and only if $l$ (resp.  $r$) is finite.  Meanwhile $b_0=l$  (resp.  $a_1=r$).  
\end{itemize}
\end{remark}

\subsection{Semimartingale local times}

Now we turn to prepare the It\^o-Tanaka-Meyer formula (see,  e.g.,  \cite[Chapter VIII,  (29)]{DM82} and \cite[Theorem~9.46]{HWY92}) for the semimartingale $X$,  which will be used to prove the main result of \S\ref{SEC7}.  
More precisely,  let $F$ be a convex function on $\bR$,  $F'_-$ be its left derivative and $F''$ be the Radon measure corresponding to the second derivative of $F$ in the sense of distribution.  Then the It\^o-Tanaka-Meyer formula states that $F(X)$ is also a semimartingale and admits the representation:
\begin{equation}\label{eq:516}
\begin{aligned}
	F(X_t)&=F(X_0)+\int_0^t F'_-(X_{s-})dX_s \\
	&+\sum_{0<s\leq t}\left[F(X_s)-F(X_{s-})-F'_-(X_{s-})\Delta X_s \right]+\frac{1}{2}\int_{\bR}L(t,x)F''(dx),
\end{aligned}\end{equation}
where $\Delta X_s:=X_s-X_{s-}$ and for any $x\in \bR$, $L(\cdot, x)$,  called the \emph{semimartingale local time} of $X$ at $x$,  is a continuous adapted increasing process with $L(0,x)=0$.  The expression of semimartingale local times is formulated in the following lemma.  For convenience we make the convention $L(\cdot,  \pm \infty)\equiv 0$.  

\begin{lemma}\label{LM65}
The semimartingale local time of $X$ at $x\in \bR$ is
\[
	L(t,x)=\left\lbrace
	\begin{aligned}
	&2\ell(t,x),\quad x\in E_m\setminus \{a_k: k\in K\};\\
	&0,\qquad\quad\;\; \text{otherwise}. 
	\end{aligned}
	\right.
\]
\end{lemma}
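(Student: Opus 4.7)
The strategy is to pin down $L(t,\cdot)$ via the occupation-times formula for semimartingale local times, namely
\[
\int_0^t f(X_s)\,d\langle X^c\rangle_s \;=\; \int_\bR L(t,x)\,f(x)\,dx \qquad\text{a.s.}
\]
(valid for every bounded Borel $f$), and to evaluate the left-hand side explicitly using the time-change structure of $X$. The crucial input is Theorem~\ref{THM517}: the continuous martingale part $X^c$ of $X$ coincides with $M^c_t = \int_0^{T_t} 1_{\tilde{E}_m}(W_s)\,dW_s$ (since $A$ is of finite variation and $M^d$ is purely discontinuous), so $\langle X^c\rangle_t = \int_0^{T_t} 1_{\tilde{E}_m}(W_s)\,ds$.

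I would next perform the change of variables $u = T_s$ on the left-hand side. On any jump interval of $T_\cdot$ the Brownian motion $W$ travels through an open gap $I_k \subset \bR\setminus\tilde{E}_m$, so $1_{\tilde{E}_m}(W_u) = 0$ there; on the complement, $u = T_s$ gives $W_u = X_s$. Combined with the classical Brownian occupation formula $\int_0^{T_t} g(W_u)\,du = 2\int_\bR g(x)\,\ell^W(T_t,x)\,dx$ and the identification $\ell^W(T_t,x) = \ell(t,x)$, this yields
\[
\int_0^t f(X_s)\,d\langle X^c\rangle_s \;=\; \int_0^{T_t} f(W_u) 1_{\tilde{E}_m}(W_u)\,du \;=\; 2\int_\bR f(x)\,\ell(t,x) 1_{\tilde{E}_m}(x)\,dx.
\]
Comparing with the semimartingale occupation formula and letting $f$ vary, I obtain the Lebesgue-a.e.\ identity
\[
L(t,x) \;=\; 2\ell(t,x) 1_{\tilde{E}_m}(x) \qquad\text{for a.e. } x\in\bR.
\]

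It remains to upgrade this a.e.\ identity to the pointwise statement, and this is the main obstacle. Here one exploits that, for fixed $t$, the map $x\mapsto L(t,x)$ is c\`adl\`ag, in particular right-continuous. If $x\notin \tilde{E}_m$, or if $x = a_k$ for some $k\in K$, then some interval $(x,x+\varepsilon)$ lies entirely in a gap $I_k$, so the a.e.\ identity already gives $L(t,y) = 0$ for a.e.\ $y\in (x,x+\varepsilon)$; right-continuity of $L(t,\cdot)$ then forces $L(t,x) = 0$. If instead $x\in E_m\setminus\{a_k:k\in K\}$, the disjoint-intervals decomposition $\bR\setminus\tilde{E}_m = \bigcup_k I_k$ forces $x$ to be a right-accumulation point of $E_m$; selecting a sequence $y_n\downarrow x$ in $E_m$ along which the a.e.\ identity holds, right-continuity of $L(t,\cdot)$ combined with the joint continuity of $(t,y)\mapsto\ell(t,y)$ (inherited from the Brownian local time) gives $L(t,x) = 2\ell(t,x)$. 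The trivial boundary cases ($x = \pm\infty$, and $x = l_0$ or $r_0$ with our convention) are immediate. Beyond these manipulations, the only delicate points are justifying the change of variables across the jumps of $T$ and ensuring the selected approximating sequence $y_n\downarrow x$ misses the Lebesgue null exceptional set, both of which are straightforward thanks to the countability of the gaps $I_k$.
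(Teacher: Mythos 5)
Your occupation-formula computation is fine as far as it goes: $\langle X^c\rangle_t=\int_0^{T_t}1_{\tilde{E}_m}(W_s)\,ds$ by Theorem~\ref{THM517}, the change of variables across the jumps of $T$ is legitimate (on a jump interval of $T$ the additive functional $S$ is constant, so $W$ stays off $\tilde{E}_m$ there), and one does obtain $L(t,x)=2\ell(t,x)1_{\tilde{E}_m}(x)$ for Lebesgue-a.e.\ $x$. The fatal gap is the upgrade to a pointwise identity via right-continuity of $x\mapsto L(t,x)$. A c\`adl\`ag-in-space version of the local time exists for \emph{continuous} semimartingales, but no such regularity holds for the discontinuous semimartingale $X$ here, and the statement you are proving shows why: if $m$ is singular --- say $E_m$ is a Lebesgue-null Cantor set, a prototypical quasidiffusion --- then $\langle X^c\rangle\equiv 0$, so the occupation formula gives $L(t,\cdot)=0$ Lebesgue-a.e., while the lemma asserts $L(t,x)=2\ell(t,x)>0$ at many $x\in E_m$. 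A right-continuous function vanishing a.e.\ vanishes everywhere, so right-continuity of $L(t,\cdot)$ is simply false in this setting, and the a.e.\ identity carries no information on the set where the entire content of the lemma lives. The same defect undermines your last step even where you flag it as merely ``delicate'': to pass to the limit along good points $y_n\downarrow x$ you need $\tilde{E}_m\cap(x,x+\varepsilon)$ to have positive Lebesgue measure, and nothing guarantees that for $x\in E_m\setminus\{a_k:k\in K\}$.

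The paper never attempts to recover $L(t,x)$ from an a.e.\ identity. It evaluates $L(t,x)$ at each fixed $x$ by applying the It\^o--Tanaka--Meyer formula \eqref{eq:516} to explicit piecewise-linear convex functions such as $\varphi(t)=\int_0^t 1_{(x,y]}(u)\,du$, compares the outcome with the explicit decomposition of Theorem~\ref{THM517} and with Tanaka's formula for $W$ evaluated at the time change $T_t$, and then separates the two local times $L(\cdot,x)$ and $L(\cdot,y)$ appearing in each resulting identity by using that $dL(\cdot,x)$ is carried by $\{t:X_{t-}=x\}$, so that these measures are mutually singular. Any repair of your argument must replace the right-continuity step by a pointwise computation of this kind, at which point you have essentially reproduced the paper's proof.
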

\begin{proof}
Note that $W_{T_{t-}}\in \tilde{E}_m$ and thus \cite[Theorem~9.44]{HWY92} yields $L(\cdot, x)\equiv 0$ for $x\notin \tilde{E}_m$.  For $x=l_0>-\infty$,  we assert that if $X_{t-}=l_0$ then $t\geq \zeta$ and $X_s=l_0$ for any $s\geq \zeta$.  This fact tells us that,  in view of \cite[Theorem~9.44]{HWY92},  $dL(\cdot, l_0)$ does not charge $(0,\zeta)\cup (\zeta, \infty)$,  as leads to $L(\cdot, l_0)\equiv 0$.  To prove the assertion,  note that $T_{t-}\leq T_t\leq \tau=\inf\{s>0:W_s\notin (l_0,r_0)\}$ by Lemma~\ref{LM515}.  Hence $W_{T_{t-}}=X_{t-}=l_0$ implies $T_{t-}=T_t=\tau<\infty$ and $W_\tau=l_0$.   We have shown in the proof of Lemma~\ref{LM515} that $T_\zeta=\tau$.  Since $T_s$ is strictly increasing before $\zeta$,  it follows that $t\geq \zeta$.  Particularly $X_s=l_0$ for any $s\geq \zeta$.  We eventually obtain $L(\cdot, l_0)\equiv 0$.  Analogously for $x=r_0<\infty$ we have $L(\cdot,  r_0)\equiv 0$.  

Next consider $x=a_k$ for some $k\in K$.  When $k=1\in K$,  it holds that $a_1=r_0<\infty$ or $a_1=r<r_0=\infty$.  In the former case we have already proved that $L(\cdot, a_1)\equiv 0$.  In the latter case $X_t\leq a_1$ and hence $X_{t-}\leq a_1$ for any $t\geq 0$.  Using Tanaka-Meyer formula (see,  e.g.,  \cite[Theorem~9.43]{HWY92}),   we get $L(\cdot, a_1)\equiv 0$.  Now let $k\in K'$.  Set $\varphi(t):=\int_0^t 1_{I_k}(u)du$, which is the difference of two convex functions.  Applying \eqref{eq:516} to $\varphi$,  we have
\begin{equation}\label{eq:517}
\begin{aligned}
	\varphi&(X_t)-\varphi(X_0)=\int_0^t 1_{(a_k,b_k]}(X_{s-})dX_s \\ &+\sum_{0<s\leq t}\left[\varphi(X_s)-\varphi(X_{s-})-1_{(a_k,b_k]}(X_{s-})\Delta X_s \right] +\frac{1}{2}L(t,a_k)-\frac{1}{2}L(t,b_k).  
\end{aligned}\end{equation}
It is easy to verify that $\varphi(X_t)-\varphi(X_0)=(b_k-a_k)(A_k(t)-B_k(t))$;  see also \cite[page 243]{BK87}.  Since $\ell(\cdot,  b_0)$ increases only at $t$ such that $X_t=b_0$ or $X_{t-}=b_0$,  it follows that
\begin{equation}\label{eq:518}
\int_0^t 1_{(a_k,b_k]}(X_{s-})d\ell(s, b_0)\leq 1_{\{a_k=b_0\}}\int_0^t 1_{\{X_{s-}=b_k, X_s=a_k\}}d\ell(s,b_0).
\end{equation}
Note that $\{0<s\leq t: X_{s-}=b_k, X_s=a_k\}$ contains only finite elements and $\ell(\cdot, b_0)$ is continuous.  Hence the left hand side of \eqref{eq:518} is equal to $0$.  Analogously one can compute that if $b_k=a_1$ then
\[
	\int_0^t 1_{(a_k,b_k]}(X_{s-})d\ell(s, a_1)=\ell(t,a_1);
\]
otherwise it is equal to $0$.  
If $b_k=a_p$ for some $p\in K'$,  the second term on the right side of \eqref{eq:517} is equal to 
\[
	(b_k-a_k)A_k(t)-(b_p-a_p)A_p(t);
\]
otherwise it is equal to $(b_k-a_k)A_k(t)$.  Using these computations and applying Theorem~\ref{THM517},  we can obtain that if $b_k=a_1$ or $a_p$,  then $\frac{1}{2}L(\cdot,a_k)-\frac{1}{2}L(\cdot, b_k)$ is a local martingale;  otherwise $\ell(\cdot, b_k)+\frac{1}{2}L(\cdot,a_k)-\frac{1}{2}L(\cdot, b_k)$ is a local martingale.  Hence either $L(\cdot, a_k)-L(\cdot, b_k)\equiv 0$ or $\ell(\cdot, b_k)+\frac{1}{2}L(\cdot,a_k)-\frac{1}{2}L(\cdot, b_k)\equiv 0$.  Note that $dL(\cdot, x)$ does not charge $\{t: X_{t-}\neq x\}$ for any $x\in \bR$;  see,  e.g.,  \cite[Theorem~9.44]{HWY92}.  Particularly,  the positive measures $dL(\cdot, a_k)$ and $dL(\cdot, b_k)$ on $[0,\infty)$ are singular.  
Therefore one can easily conclude that $L(\cdot, a_k)\equiv 0$.  In addition,  we also get that if $b_k\neq a_1, a_p$,  then $L(\cdot, b_k)=2\ell(\cdot, b_k)$.  

Thirdly consider $x\in E_m$ such that there are $y_0>y>x$ such that $[x,y]\subset [x, y_0)\subset E_m$.  Put $\varphi(t):=\int_0^t 1_{(x,y]}(u)du$.  Applying \eqref{eq:516} to $\varphi$ we have
\begin{equation}\label{eq:519}
	\varphi(X_t)-\varphi(X_0)=\int_0^t 1_{(x,y]}(X_{s-})dX_s +\frac{1}{2}L(t,x)-\frac{1}{2}L(t,y).  
\end{equation}
The first term on the right hand side is clearly a local martingale.  
On the other hand,  applying Tanaka's formula for Brownian motion we get
\[
	\varphi(W_t)-\varphi(W_0)=\int_0^t 1_{(x,y]}(W_s)dW_s+\ell^W(t,x)-\ell^W(t,y).  
\]
Write $U_t:=\int_0^t1_{(x,y]}(W_s)dW_s$.  
It follows that
\begin{equation}\label{eq:520}
	\varphi(X_t)-\varphi(X_0)=U_{T_t}+\ell(t,x)-\ell(t,y).  
\end{equation}
Note that $W_u\notin E_m$ for $u\in (T_{t-},T_t)$.  The bracket $\langle U\rangle$ and thus also $U$ is constant on $(T_{t-},T_t)$.  This implies the $T$-continuity of $U$ in the sense of \cite[Chapter V,  Definition~1.3]{RY99}.  In view of \cite[Chapter V, Proposition~1.5]{RY99},   we obtain that $U_{T_t}$ is a continuous local martingale.  As a result,  \eqref{eq:519} and \eqref{eq:520} yield
\[
	\frac{1}{2}L(t,x)-\frac{1}{2}L(t,y)=\ell(t,x)-\ell(t,y).  
\]
Note that $dL(\cdot, x)$ and $dL(\cdot, y)$ are singular.  So are $d\ell(\cdot, x)$ and $d\ell(\cdot, y)$.  Particularly we must have $L(t,x)=2\ell(t,x)$ and $L(t,y)=2\ell(t,y)$.  

Finally let $x\in E_m$ such that there exists a sequence of intervals in $\{I_k: k\in K'\}$ decreasing to $x$.  Without loss of generality we assume that $x<b_2$ and $(b_2, a_1)\cap E_m\neq \emptyset$.  It may appear that the intervals decreasing to $x$ are all ended by points in $\{a_p: p\in K \}$.  We ignore this case at first and suppose that $b_2\notin \{a_p: p\in K\}$.  Put $\varphi(t):=\int_0^t 1_{(x, b_2]}(u)du$.  Applying \eqref{eq:516} to $\varphi$ we get
\begin{equation}\label{eq:521}
	\varphi(X_t)-\varphi(X_0)=\int_0^t 1_{(x,b_2]}(X_{s-})dX_s+\frac{1}{2}L(t,x)-\frac{1}{2}L(t,b_2),
\end{equation}
where the first term on the right hand side is a local martingale due to $(b_2,a_1)\cap E_m\neq \emptyset$.  
Mimicking \eqref{eq:520} we also have
\begin{equation}\label{eq:522}
	\varphi(X_t)-\varphi(X_0)=\int_0^{T_t}1_{(x,b_2]}(W_s)dW_s+\ell(t,x)-\ell(t,b_2).  
\end{equation}
Put $\varphi_k(t):=\int_0^t 1_{(a_k,b_k]}(u)du$ for $k\in K$ such that $I_k\subset (x, b_2)$.  Denote by $K''$ the set of all these $k$.  Then 
\begin{equation}\label{eq:523}
\varphi_k(X_t)-\varphi_k(X_0)=(b_k-a_k)(A_k(t)-B_k(t))
\end{equation}
and on account of Tanaka's formula for Brownian motion, 
\begin{equation}\label{eq:524}
	\varphi_k(X_t)-\varphi_k(X_0)=\int_0^{T_t}1_{(a_k,b_k]}(W_s)dW_s + \ell(t,a_k)-\ell(t,b_k).  
\end{equation}
Repeating the argument in the previous step,  one can conclude that
\[
	t\mapsto \int_0^{T_t} 1_{(x,b_2]\setminus \cup_{k\in K''}(a_k,b_k]}(W_s)dW_s
\]
is a continuous local martingale.  Therefore \eqref{eq:521}, \eqref{eq:522}, \eqref{eq:523} and \eqref{eq:524} tell us
\[
\frac{1}{2}L(t,x)-\frac{1}{2}L(t,b_2)=\ell(t,x)-\ell(t,b_2).  
\]
Since we have obtained $L(t,b_2)=2\ell(t,b_2)$ when treating the case $x=a_k$ for $k\in K$,  it follows that $L(t,x)=2\ell(t,x)$.  At last we consider the ignored case with $b_2=a_k$ for some $k\in K'$.  Currently there is an additional term $-(b_k-a_k)A_k(t)$ in \eqref{eq:521}.  The above argument leads to $\frac{1}{2}L(t,x)-\frac{1}{2}L(t,b_2)=\ell(t,x)$,  while $L(t,b_2)=L(t,a_k)\equiv 0$ as proved in the second step.  Therefore we still have $L(t,x)=2\ell(t,x)$.  That completes the proof. 
\end{proof}

\section{Correspondence between skip-free Hunt processes and quasidiffusions}\label{SEC7}

Two Markov processes are called \emph{equivalent} if their transition functions coincide.   
The main purpose of this section is to prove the equivalence between SFHs on natural scale and quasidiffusions with the property (QK).  Recall that the state spaces of SFH on its natural scale and quasidiffusion are both nearly closed subsets of $\bR$ (not of $\overline{\bR}$!); see Remarks~\ref{RM48} and Lemma~\ref{LM51}.  Fix $E\in \mathscr K$ ended by $l$ and $r$,  and we add a ceremony $\partial$  attached to $E$ by the same way as in \S\ref{SEC51}.  
The main result is as follows.

\begin{theorem}\label{THM71}
Let $E\in \mathscr K$ and $\mu$ be a fully supported positive Radon measure on $E$.  The following Markov processes are equivalent:
\begin{itemize}
\item[\rm(i)] An SFH on $E$ on its natural scale whose speed measure is equal to $\mu$;
\item[\rm(ii)] A quasidiffusion with speed measure $m$ such that $E_m=E$,  $m|_{E}=2\mu$ and \text{(QK)} holds for $m$.
\end{itemize}
\end{theorem}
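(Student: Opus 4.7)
The plan is to verify the two directions separately. Direction (ii)$\Rightarrow$(i) will be a direct computation of scale and speed for a quasidiffusion, while (i)$\Rightarrow$(ii) will require constructing a candidate quasidiffusion from the given speed measure and then arguing uniqueness. The central bridge in both cases is the representation of the speed measure via $h_{a,b}(x) = \mathbf{E}_x(T_a \wedge T_b)$ developed in \S\ref{SEC53}, combined with the Brownian time-change formula underlying the definition of a quasidiffusion.

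For (ii)$\Rightarrow$(i), fix a quasidiffusion $X$ whose speed measure $m$ satisfies (QK) and $E_m = E$. Properties (SF) and (SR) for $X$ were recorded in \S\ref{SEC61}; property (SK) follows from (QK) together with Lemma~\ref{LM515}, which identifies $\zeta = S_{\tau-}$ with the Brownian exit time from $(l_0,r_0)$, so that (QK) forces any finite-lifetime path to leave $E$ at an endpoint lying outside $E$. Since the time change does not alter the order in which points are hit, $\mathbf{P}_x(T_b < T_a) = (x-a)/(b-a)$ by the classical Brownian hitting formula, so $X$ is on its natural scale. To identify the speed measure, I would use
\[
h_{a,b}(x) = \mathbf{E}_x\!\int \ell^W(T^W_a\wedge T^W_b, y)\,m(dy) = \int_{[a,b]} g(x,y)\,m(dy),
\]
where $g(x,y) = (x\wedge y - a)(b - x\vee y)/(b-a)$ is the Green kernel associated with the $\ell^W$-normalization; distributional differentiation gives $-\frac12 h''_{a,b} = \frac12 m$ on $(a,b)$. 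Since $m$ vanishes on the gaps of $E$ by construction of $E_m$, restriction to $E$ yields that the SFH-speed measure equals $m|_E/2 = \mu$.

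For (i)$\Rightarrow$(ii), given an SFH $X$ on $E$ on natural scale with speed measure $\mu$, define a Krein string $m$ on $\bR$ by setting $m|_E = 2\mu$, $m \equiv 0$ on each gap $(a_k,b_k)$ of $E$, and extending outside $[l,r]$ in the unique way that produces $E_m = E$ and realizes (QK) (in particular $l_0 = l$ if $l\notin E$, with infinite mass pushed beyond). Let $\tilde X$ be the quasidiffusion associated with this $m$. By the first direction, $\tilde X$ is an SFH on $E$ on its natural scale with speed measure $\mu$, so $X$ and $\tilde X$ share both invariants.

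The main obstacle is the final identification of $X$ with $\tilde X$ in law. My plan is to proceed inductively on nested exit intervals: for $a<x<b$ in $E$, the joint law of $(T_a\wedge T_b, X_{T_a\wedge T_b})$ under $\mathbf{P}_x$ is determined by two pieces of data that are common to $X$ and $\tilde X$. The exit distribution is fixed by the scale, giving $\mathbf{P}_x(X_{T_a\wedge T_b} = b) = (x-a)/(b-a)$. The Laplace transform $u_\lambda(x) := \mathbf{E}_x e^{-\lambda (T_a\wedge T_b)}$ should be characterized as the unique solution on $[a,b]\cap E$ of $\tfrac12 (d^2/d\mu\, dx)\, u_\lambda = \lambda u_\lambda$ with boundary values $1$ at $a$ and $b$, interpreted through the generalized second derivative of \S\ref{SEC53}. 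Verifying this boundary-value characterization for a general SFH (not just a quasidiffusion) is the delicate step, because it requires a careful treatment of holding times at isolated points and of the linear extension convention across gaps; once it is in hand, the strong Markov property combined with approximation along a sequence of intervals $[a_n,b_n]$ exhausting $(l,r)$ yields equality of all finite-dimensional distributions, completing the proof.
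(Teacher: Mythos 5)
Your direction (ii)$\Rightarrow$(i) is essentially sound and in fact takes a cleaner route than the paper for the speed-measure identification: writing $h_{a,b}(x)=\mathbf{E}_xS_{T^W_a\wedge T^W_b}=\int_{[a,b]}g(x,y)\,m(dy)$ and differentiating the Green kernel gives $-\tfrac12 h''_{a,b}=\tfrac12 m$ directly, whereas the paper reaches the same conclusion by applying the It\^o--Tanaka--Meyer formula to $h_{a,b}(X)$ together with the semimartingale decomposition of Theorem~\ref{THM517}, Lemma~\ref{LM65} on semimartingale local times, and the uniqueness of the Revuz correspondence. (You should still justify the interchange $\mathbf{E}_x\int\ell^W(\cdot,y)m(dy)=\int\mathbf{E}_x\ell^W(\cdot,y)m(dy)$ and treat the boundary cases $l\in E$ or $r\in E$ via $h_b$ and $h_a$ of \S\ref{SEC53}, but these are routine.)

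The genuine gap is in direction (i)$\Rightarrow$(ii), precisely at the step you flag as delicate. The speed measure of an SFH is \emph{defined} only through the first moments $h_{a,b}(x)=\mathbf{E}_x(T_a\wedge T_b)$; to characterize the full Laplace transform $u_\lambda(x)=\mathbf{E}_xe^{-\lambda(T_a\wedge T_b)}$ as the solution of $\tfrac12\,d^2u_\lambda/d\mu\,dx=\lambda u_\lambda$ you would first need the expected occupation identity $\mathbf{E}_x\int_0^{T_a\wedge T_b}f(X_t)\,dt=2\int g_{a,b}(x,y)f(y)\,\mu(dy)$ for general $f$, not just $f\equiv1$. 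Deriving this from the pathwise axioms (SF), (SR), (SK) alone requires an excursion/crossing decomposition argument that is essentially the hard content of the theorem, and your proposal offers no mechanism for it; moreover, even granting the exit-time laws, passing to all finite-dimensional distributions needs a further grid-refinement argument. The paper circumvents all of this: it proves only that the \emph{hitting distributions} $P_K(x,\cdot)$ of $X$ and of the time-changed Brownian motion $\widehat X$ coincide for every compact $K$ (this uses nothing but the scale function via Theorem~\ref{THM58}), invokes the Blumenthal--Getoor--McKean theorem \cite[Chapter~V, Theorem~5.1]{BG68} to conclude that $X$ is a time change of $\widehat X$ by a strictly increasing PCAF $A$, and then pins down the Revuz measure of $A$ as $\mu$ by rerunning the speed-measure identification of \S\ref{SEC53} for the time-changed process. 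If you want to salvage your route, you must either carry out the occupation-measure identity in full or replace the Laplace-transform step by the hitting-distribution/BGM argument.
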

\begin{remark}\label{RM42}
This result,  together with Remark~\ref{RM48},  readily implies that for any SFH $X$ on $E\in \overline{\mathscr K}$ with scale function $\bs$,  $\tilde{X}:=\bs(X)$ is a quasidiffusion on $\bs(E)\in \mathscr K$.  
\end{remark}


\subsection{Proof of Theorem~\ref{THM71}}\label{SEC41}

 Since involved and long,  the proof of Theorem~\ref{THM71} will be divided into several parts.  As a byproduct,  we will prove that both Markov processes in Theorem~\ref{THM71} are also equivalent to another time-changed Brownian motion introduced as below. 
 
Put an open interval
\begin{equation}\label{eq:41-2}
J:=(l^0, r^0),
\end{equation}
where $l^0:=l$ if $l\notin E$ and $l^0:=-\infty$ otherwise,  $r^0:=r$ if $r\notin E$ and $r^0:=\infty$ otherwise. 
Let $W^0$ be the absorbing Brownian motion on $J$ associated with the Dirichlet form $(\frac{1}{2}\mathbf{D}, H^1_0(J))$ where $H^1_0(J)$ is the closure of the family of smooth functions with compact support on $J$ in the Sobolev space of order $1$ over $J$ and for $f,g\in H^1_0(J)$,  $\frac{1}{2}\mathbf{D}(f,g)=\frac{1}{2}\int_J f'(x)g'(x)dx$; see,  e.g.,  \cite[Example~3.5.7]{CF12}.  The lifetime of $W^0$ is denoted by $\zeta^0$.  Viewed as a measure on $J$,  $\mu$ is a Radon smooth measure with respect to $W^0$.  In addition,  the quasi support of $\mu$ is identified with its topological support $E$,  which is a closed subset of $J$.  Denote by $\widehat{S}=(\widehat S_t)_{t\geq 0}$ the PCAF of $\mu$ with respect to $W^0$.  Set 
\[
\widehat T_t:=\left\lbrace 
	\begin{aligned}
		&\inf\{u: \widehat S_u>t\},\quad t<\widehat S_{\zeta^0-},\\
		&\infty,\qquad\qquad\qquad\;\; t\geq \widehat S_{\zeta^0-}  
	\end{aligned}
\right.
\]
and 
\[
\widehat{X}_t:=W^0_{\widehat T_t}, \;t\geq 0,\quad \widehat{\zeta}:=\widehat S_{\zeta^0-}.
\]
Then $\widehat{X}$ is a right process on $E$ with lifetime $\widehat{\zeta}$,  called the \emph{time-changed Brownian motion} with speed measure $\mu$;  see,  e.g.,  \cite[Theorem~A.3.11]{CF12}.   In view of \cite[Corollary~5.2.10]{CF12},  $\widehat X$ is symmetric with respect to $\mu$ and its associated Dirichlet form on $L^2(E,\mu)$ is regular.  Particularly $\widehat{X}$ is a Hunt process on $E$.  



\subsubsection{Quasidiffusion and time-changed Brownian motion}\label{SEC711}


We first show that the quasidiffusion is identified with the time-changed Brownian motion $\widehat{X}$. 

\begin{proof}[Proof of identification between quasidiffusion and $\widehat{X}$]
Let $X$ be the quasidiffusion in Theorem~\ref{THM71}.  
Adopt the same notations as in \S\ref{SEC6}.  The assumption \text{(QK)} implies that $l_0$ (resp.  $r_0$) in \eqref{eq:51-2} is identified with that defined in \eqref{eq:41-2}.  In addition,  $W^0$ is identified with the killed process of Brownian motion $W$ at $\tau_J:=\{t>0: W_t\notin J\}$.  Particularly $\tau_J=\tau$ where $\tau$ is defined in Lemma~\ref{LM515}. 
Note that for any $x\in J$,  $\ell^{W^0}(t,x):=\ell^W(t\wedge \tau, x), t\geq 0$,  is a PCAF of $W^0$ whose Revuz measure is $\frac{1}{2}\delta_x$; see, e.g.,  \cite[Chapter X,  Proposition~2.4]{RY99} and \cite[Proposition~4.1.10]{CF12}.  Hence the PCAF of $\mu$ with respect to $W^0$ is 
\[
	\widehat S_t=\frac{1}{2}\int_{E} \ell^{W^0}(t,x)\mu(dx)=\int_{E_m} \ell^{W^0}(t,x)m(dx),\quad t\geq 0.  
\]


We turn to show the identification of $X$ and $\widehat{X}$.  In fact,  for $u<\tau$,  we have $\widehat{S}_u=S_u$.  It follows from Lemma~\ref{LM515} that $\zeta=S_{\tau-}=\widehat{S}_{\tau-}=\widehat{\zeta}$.  For $t<\zeta=\widehat{\zeta}$,  since $T_t=\inf\{u: S_u>t\}<\tau$ as obtained in Lemma~\ref{LM515},  it follows that
\[
	\widehat{T}_t=\inf\{u: \widehat{S}_u>t\}=\inf\{u<\tau: S_u>t\}=T_t.  
\]
As a result $\widehat{X}_t=W_{\widehat{T}_t\wedge \tau}=W_{T_t\wedge \tau}=W_{T_t}=X_t$ for $t<\zeta=\widehat{\zeta}$.   Therefore $X$ and $\widehat{X}$ are identified.  That completes the proof. 
\end{proof}
\begin{remark}\label{RM73}
The Dirichlet form of $\widehat{X}$ is expressed in,  e.g.,  \cite[Theorem~3.4]{L23}.  It contains no killing part.  
If \text{(QK)} is not assumed,  the argument in this proof still holds true by replacing $W^0$ with the absorbing Brownian motion on $(l_0,r_0)$,  where $l_0$ and $r_0$ are defined in \eqref{eq:51-2} instead.  In other words,  a general quasidiffusion $X$ without (QK) is still identified with the time-changed process of absorbing Brownian motion on $(l_0,r_0)$ by the PCAF corresponding to $\mu$.  Meanwhile its associated Dirichlet form enjoys an additional killing term
\[
	\frac{\widehat{f}(l)^2}{2|l_0-l|} \quad \left(\text{resp. }\frac{\widehat{f}(r)^2}{2|r_0-r|} \right),
\]
whenever $-\infty<l_0<l$ (resp.  $r<r_0<\infty$).  In the context of Markov processes, $X$ admits killing at $l$ or $r$ if $-\infty<l_0<l$ or $r<r_0<\infty$,  and the killing ratio is $1/(2|l-l_0|)$ or $1/(2|r-r_0|)$.  In the context of analysis, $X$ corresponds to the operator \eqref{eq:01} with Robin boundary condition at $l$ or $r$. 
\end{remark}

\subsubsection{Quasidiffusion as an SFH}\label{SEC712}

As noted in \S\ref{SEC6},  the quasidiffusion $X$ is a standard process on $E$ satisfying (SF).  Due to the argument in \S\ref{SEC711},  it is identified with $\widehat{X}$ and hence actually a Hunt process on $E$.  In addition,  applying \cite[Theorems~5.2.8~(2) and Theorem~3.5.6~(1)]{CF12},  we can obtain (SR) for $X$.   
Since $\widehat{X}$ admits no killing inside as mentioned in Remark~\ref{RM73},  it follows that (SK) holds true for $X$.  Therefore we can eventually conclude that $X$ is an SFH on $E$.  

The fact that $X$,  as an SFH,  is on its natural scale can be seen by virtue of \cite[(6,7)]{K86}.  In what follows we prove that the speed measure of $X$, as an SFH, is equal to $\mu$.  


\begin{proof}
Adopt the same notations as in \S\ref{SEC53}. 

Let us first treat the case $l,r\notin E$.  Take $a,b\in E$ with $a<b$.  Set $T:=T_a\wedge T_b$ and let $h:=h_{a,b}$ be defined as in \S\ref{SEC531}. 
We assert that for any $x\in E$ with $a<x<b$,  
\begin{equation}\label{eq:72}
M_t:=h(X_{t\wedge T})+t\wedge T=\mathbf{E}_x(T|\sF_t)
\end{equation} 
is a $\mathbf{P}_x$-martingale.  In fact,  $T<\infty$,  $\mathbf{P}_x$-a.s.  and we have
\[
M_t=\mathbf{E}_x \left(T\circ \theta_{t\wedge T}|\sF_{t\wedge T} \right)+t\wedge T=\mathbf{E}_x (T|\sF_{t\wedge T})=\mathbf{E}_x(T|\sF_t).  
\]	
Hence \eqref{eq:72} holds.  Note that $h$ is concave as proved in Lemma~\ref{LM511}.  Applying It\^o-Tanaka-Meyer formula \eqref{eq:516} to $h$,  we get
\begin{equation}\label{eq:71}
\begin{aligned}
h&(X_{t\wedge T})-h(X_0)=\int_0^{t\wedge T}h'_-(X_{s-})dX_s  \\
&+\sum_{0<s\leq t\wedge T} \left[h(X_s)-h(X_{s-})-h'_-(X_{s-})\Delta X_s\right]+\frac{1}{2}\int_\bR L(t\wedge T,  x)h''(dx).  
\end{aligned}
\end{equation}
In view of Remark~\ref{RM64}~(2),  the first term on the right hand side is a local martingale.  Note that $\Delta X_s\neq 0$ if and only if $X_s=a_k, X_{s-}=b_k$ or $X_s=b_k,  X_{s-}=a_k$ for some $k\in K'$,  where $K'$ is given before Theorem~\ref{THM517}.  Using the expression of $h$,  one yields that the second term on the right hand side of \eqref{eq:71} is equal to 
\[
\sum_{k\in K'}(h'_+(a_k)-h'_-(a_k))(b_k-a_k) A_k(t\wedge T).  
\]
By virtue of Theorem~\ref{THM517} and Lemma~\ref{LM65},  it follows from \eqref{eq:72} and \eqref{eq:71} that
\begin{equation}\label{eq:73}
	t\mapsto \int_{E} \ell(t\wedge T,x)h''(dx)+t\wedge T
\end{equation}
is a local martingale.   On account of $T_t<\tau$ for $t\leq T<\zeta$,  we have that $\ell(t\wedge T, x)\leq \ell^W(T_t, x)<\infty$ for $x\in (a,b)$ and $\ell(t\wedge T, x)=0$ for $x\notin (a,b)$.  Note that $-h''((a,b))<\infty$.  Hence \cite[Corollary~2.10]{RY99} yields that \eqref{eq:73} is also a continuous process locally of bounded variation and must be equal to $0$ for all $t\geq 0$.  Letting $a\downarrow l$ and $b\uparrow r$ we get
\begin{equation}\label{eq:74}
	-2\int_E \ell(t,x)\tilde{\mu}(dx)+t=0,\quad 0\leq t<\zeta,
\end{equation}
where $\tilde{\mu}$ defined as \eqref{eq:510} is the speed measure of $X$ as an SFH.  
Further let $t:=S_u<\zeta$ for $u<\tau$.  Since $W_v\notin E$ for $v\in (u, T_{t})$ if $u<T_t$,  it follows that $\ell(t,x)=\ell^W(T_t,x)=\ell^W(u,x)$.  Thus \eqref{eq:74} reads as
\begin{equation}\label{eq:75}
	2\int_E \ell^W(u\wedge \tau,x)\tilde{\mu}(dx)=S_u=2\int_E\ell^W(u\wedge \tau ,x)\mu(dx),\quad u\geq 0,
\end{equation}
where the second identity is due to \eqref{eq:61}.  Note that the first  (resp.  second) integration in \eqref{eq:75} is a PCAF of $W^0$ whose Revuz measure is $\tilde{\mu}$ (resp.  $\mu$).  Therefore the uniqueness of Revuz correspondence leads to $\mu=\tilde{\mu}$.  

Next consider the case $l\in E,  r\notin E$.  Take $l< b\in E$ and set $T:=T_b$.  Let $h:=h_b$ be defined as in \S\ref{SEC532}.  Then \eqref{eq:72} still holds for $x\in E$ with $x<b$ and particularly,  $M_t$ is a $\mathbf{P}_x$-martingale.   Applying the It\^o-Tanaka-Meyer formula to this $h$ we still get \eqref{eq:71}.  But now the first term on the right hand side of \eqref{eq:71} is the sum of a local martingale and
\[
	\int_0^{t\wedge T}h'_-(X_{s-})d\ell(s,l).  
\]
Note that $\ell(\cdot, l)$ increases only at $X_{s}=l$ or $X_{s-}=l$.  If $X_{s}=l$ but $X_{s-}\neq l$,  then $l=a_k<b_k=X_{s-}$ for some $k\in K'$ due to (SF).  The number of these times is finite before $t\wedge T$.  Since $\ell(\cdot, l)$ is continuous,  it follows that
\[
	\int_0^{t\wedge T}h'_-(X_{s-})d\ell(s,l)=h'_-(l)\ell(t\wedge T,l)=0.
\]
Repeating the argument for the first case,  we can still obtain \eqref{eq:75} for $\tilde{\mu}$ defined as \eqref{eq:513}.  Eventually $\tilde{\mu}=\mu$ can be concluded.  

Another two cases $l\notin E, r\in E$ and $l,r\in E$ can be argued analogously.  The proof is completed.  
\end{proof}

\subsubsection{Skip-free Hunt process and time-changed Brownian motion}

Finally let $X$ be an SFH on $E$ on its natural scale whose speed measure is equal to $\mu$.  We are to prove that $X$ is equivalent to the time-changed Brownian motion $\widehat{X}$.

\begin{proof}
As proved in \S\ref{SEC711} and \S\ref{SEC712},  $\widehat{X}$ is also an SFH on $E$ on its natural scale whose lifetime is $\widehat{\zeta}$.  Set $\widehat{T}_x:=\inf\{t>0:\widehat{X}_t=x\}$ for any $x\in E$.  

For any non-empty compact set $K\subset E_\partial$,  define the hitting distributions for $X$ and $\widehat{X}$ as follows: For $A\subset K$, 
\[
	P_K(x,A):=\mathbf{P}_x(X_{T_K}\in A, T_K<\zeta),\quad \widehat{P}_K(x,A):=\mathbf{P}_x(\widehat{X}_{\widehat{T}_K}\in A,  \widehat{T}_K<\widehat{\zeta}),
\]
where $T_K:=\inf\{t>0:X_t\in K\}$ and $\widehat{T}_K:=\inf\{t>0:\widehat{X}_t\in K\}$.  We assert that 
\begin{equation}\label{eq:77}
	P_K(x,\cdot)=\widehat{P}_K(x,\cdot),\quad \forall x\in E.
\end{equation}
Only the case $l\in E,  r\notin E$ will be treated and the others can be argued analogously.  Note that $\partial$ is identified with $r$,  and in view of Lemma~\ref{LM54}~(4), 
\begin{equation*}
\zeta=T_r:=\lim_{b\uparrow r}T_b,\quad \widehat{\zeta}=\widehat{T}_r:=\lim_{b\uparrow r}\widehat{T}_b.  
\end{equation*}
 When $x\in K$,  $T_K\leq T_x=0$ and $\widehat{T}_K\leq \widehat{T}_x=0$.  Hence $P_K(x,\cdot)=\widehat{P}_K(x,\cdot)=\delta_x$.  When $x\notin K$,  set $K_-:=\{y\in K: y<x\}$ and $K_+:=\{y\in K: y>x\}$.  We verify \eqref{eq:77} for different cases as follows:
\begin{itemize}
\item[(1)] $K_-\neq \emptyset,  K_+=\emptyset$: Set $a:=\sup\{y: y\in K_-\}$.  On account of Lemma~\ref{LM54}~(2),  $T_K=T_a$ and $\widehat{T}_K=\widehat{T}_a$.  Then both $P_K(x,\cdot)$ and $\widehat{P}_K(x,\cdot)$ are concentrated on $\{a\}$.  It follows from Theorem~\ref{THM58} that
\[
P_K(x,\{a\})=\mathbf{P}_x(T_a<\zeta)=\lim_{b\uparrow r}\mathbf{P}_x(T_a<T_b)=\lim_{b\uparrow r}\frac{b-x}{b-a},
\]
which is equal to $(r-x)/(r-a)$ if $r<\infty$ and $1$ if $r=\infty$.  Analogously one can also compute
\[
	\widehat P_K(x,\{a\})=\lim_{b\uparrow r}\frac{b-x}{b-a}.
\]
Thus \eqref{eq:77} is verified.
\item[(2)] $K_-=\emptyset,  K_+\neq \emptyset$: Set $b:=\inf\{y: y\in K_+\}$.  If $b=r$,  then $T_K=\zeta$ and $\widehat{T}_K=\widehat{\zeta}$.  Hence $P_K(x,\cdot)=\widehat{P}_K(x,\cdot)\equiv 0$.  If $b<r$,  then $T_K=T_b$ and $\widehat{T}_K=T_b$.  Both $P_K(x,\cdot)$ and $\widehat{P}_K(x,\cdot)$ are concentrated on $\{b\}$.  Note that $\mathbf{P}_x(T_b<\infty)=\mathbf{P}_x(\widehat{T}_b<\infty)=1$;  see \S\ref{SEC532}.  One can easily obtain that $P_K(x, \{b\})=\widehat{P}_K(x,\{b\})=1$.  Hence \eqref{eq:77} holds true.
\item[(3)] $K_-,K_+\neq \emptyset$: Set $a:=\sup\{y: y\in K_-\}$ and $b:=\inf\{y: y\in K_+\}$.   If $b=r$,  then \eqref{eq:77} has been verified in the case $K_-\neq \emptyset,  K_+=\emptyset$.  Now suppose $b<r$.  Then both $P_K(x,\cdot)$ and $\widehat{P}_K(x,\cdot)$ are concentrated on $\{a,  b\}$.  Applying Theorem~\ref{THM58},  we get
$P_K(x,\{a\})=\mathbf{P}_x(T_a<T_b)=\frac{b-x}{b-a}$ and $P_K(x,\{b\})=\frac{x-a}{b-a}$.  Similarly $\widehat P_K(x,\{a\})=\frac{b-x}{b-a}$ and $\widehat P_K(x,\{b\})=\frac{x-a}{b-a}$.  Therefore \eqref{eq:77} is verified.  
\end{itemize}

With \eqref{eq:77} at hand,  we apply the Blumenthal-Getoor-McKean theorem (see,  e.g.,  \cite[Chapter V,  Theorem~5.1]{BG68}) to obtain a CAF $A=(A_t)_{t\geq 0}$ of $\widehat{X}$ which is strictly increasing and finite on $[0,\widehat{\zeta})$ such that $X$ is equivalent to the time-changed process of $\widehat X$ by $A$.  In view of \cite[Chapter V, Theorem~2.1]{BG68} (the symmetric measure $\mu$ of $\widehat{X}$ is clearly a reference measure in the sense of \cite[Chapter V, Definition~1.1]{BG68} due to \cite[Chapter V, Proposition~1.2]{BG68}),  $A$ is a perfect CAF in the sense of \cite[Chapter IV,  Definition~1.3]{BG68}.  Hence $A$ is a PCAF of $\widehat{X}$ in the sense of,  e.g.,  \cite[Definition~A.3.1]{CF12}.  Denote the Revuz measure of $A$ by $\widehat{\mu}$.  Since $A$ is strictly increasing and the polar set for $\widehat{X}$ must be empty (see,  e.g.,  \cite[Corollary~3.4]{L23}),  it follows that the quasi support of $\widehat{\mu}$ is identified with its topological support $\text{supp}[\widehat{\mu}]=E$.  

We prove that $\widehat{\mu}$ is a Radon measure on $E$.  In fact,  there is an $\widehat{\sE}$-nest $\{\widehat{F}_n:n\geq 1\}$ such that $\widehat{\mu}(\widehat{F}_n)<\infty$.  For any compact set $K\subset E$,   it follows from \cite[Lemma~3.8]{L23} that $K\subset \widehat{F}_n$ for some $n$.  Particularly,  $\widehat{\mu}(K)<\infty$.   

Now applying \cite[Corollary~5.2.12]{CF12},  $X$,  as a time-changed process of $\widehat{X}$ by $A$,  is also a time-changed Brownian motion by the PCAF corresponding to $\widehat{\mu}$.  Repeating the argument in \S\ref{SEC712},  we can conclude that its speed measure in the sense of SFH is also equal to $\widehat{\mu}$.  Therefore $\widehat{\mu}=\mu$ and particularly,  $X$ is equivalent to $\widehat{X}$.  That completes the proof.  
\end{proof}

\subsection{Consequences}

As a corollary of Theorem~\ref{THM71},  we present the correspondence between an SFH and its scale function and speed measure.  The uniqueness in this corollary means that all processes satisfying the desirable conditions are equivalent.  

\begin{corollary}\label{COR64}
\begin{itemize}
\item[(1)] Let $E\in \mathscr K$ and $\mu$ be a fully supported positive Radon measure on $E$.  Then there exists a unique SFH on $E$ on its natural scale whose speed measure is $\mu$.  
\item[(2)] Let $E\in \overline{\mathscr K}$,  $\mu$ be a fully supported positive Radon measure on $E$ and $\bs$ be a strictly increasing and continuous real valued function on $E$.  Then there exists a unique SFH on $E$ with scale function $\bs$ and speed measure $\mu$. 
\end{itemize}
\end{corollary}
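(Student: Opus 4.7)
The plan is to derive both parts of the corollary from Theorem~\ref{THM71}, which furnishes the equivalence between SFHs on their natural scale and quasidiffusions satisfying (QK). For existence in part~(1), I would construct, from the data $(E,\mu)$, a right-continuous increasing function $m:\bR\to[-\infty,\infty]$ whose associated set $E_m$ coincides with $E$, whose restriction $m|_E$ equals $2\mu$, and which satisfies (QK). Decompose $[l,r]\setminus\overline E=\bigcup_k(a_k,b_k)$ as in \eqref{eq:51}. Declare $m$ to be constant on each gap $(a_k,b_k)$ and let $m$ pick up the measure $2\mu$ on $E\cap[l,r]$. At the endpoints I would proceed by cases: if $l\in E$ (hence $l>-\infty$), extend $m$ by a finite constant on $(-\infty,l)$, forcing $l_0=-\infty$; if $l\notin E$ but $l>-\infty$, set $m\equiv-\infty$ on $(-\infty,l)$, forcing $l_0=l$ in accordance with (QK); if $l=-\infty$, nothing further is needed. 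The right endpoint is handled symmetrically. A direct verification, using the full support of $\mu$ and the definitions in \eqref{eq:51-2}, yields $E_m=E$, $m|_E=2\mu$, and (QK); invoking Theorem~\ref{THM71} then produces an SFH of the required type.

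For uniqueness in part~(1), suppose $X^1$ and $X^2$ are two SFHs on $E$ on their natural scale with speed measure $\mu$. By Theorem~\ref{THM71}, each is equivalent to a quasidiffusion whose defining $m$ satisfies $E_m=E$, $m|_E=2\mu$, (QK). A short analysis of \eqref{eq:51-2} under the constraint $E_m=E$ together with (QK) shows that $l_0$ must equal $l$ when $l\notin E$ and must equal $-\infty$ when $l\in E$, matching $l^0$ of \eqref{eq:41-2} (symmetrically for $r_0$); hence the measure-theoretic content of $m$ on $[l_0,r_0]$ is pinned down up to an additive constant irrelevant to the time-change integrand, and the two quasidiffusions must coincide. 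Part~(2) reduces to part~(1) via the scale function. Given $(E,\bs,\mu)$ with $E\in\overline{\mathscr K}$, set $\tilde E:=\bs(E)$; Remark~\ref{RM48} gives $\tilde E\in\mathscr K$. Push $\mu$ forward to $\tilde\mu:=\mu\circ\bs^{-1}$, a fully supported positive Radon measure on $\tilde E$. Part~(1) delivers a unique SFH $\tilde X$ on $\tilde E$ on its natural scale with speed measure $\tilde\mu$; define $X_t:=\bs^{-1}(\tilde X_t)$. Since $\bs^{-1}:\tilde E\to E$ is a homeomorphism, $X$ inherits (SF), (SR) and (SK) from $\tilde X$, the hitting identity \eqref{eq:55} together with Theorem~\ref{THM58} confirms that $\bs$ is its scale function, and Definition~\ref{DEF514}(2) identifies its speed measure as $\mu$. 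Uniqueness in part~(2) follows by the reverse reduction through Remark~\ref{RM42}: any candidate $X'$ yields $\bs(X')$, an SFH on $\tilde E$ on its natural scale with speed measure $\tilde\mu$, which by part~(1) coincides with $\tilde X$, so $X'$ coincides with $X$.

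The main obstacle I anticipate is the bookkeeping in the construction of $m$ in part~(1): the four combinations of whether $l$ and $r$ lie in $E$ must each be handled separately to ensure simultaneously that $E_m=E$, that $m|_E$ matches $2\mu$ as a measure (including possible atoms at isolated points such as $l$ or $r$ when these belong to $E$), and that (QK) is satisfied. Once this bookkeeping is complete, every remaining step is a direct appeal to Theorem~\ref{THM71}, Theorem~\ref{THM58}, Remark~\ref{RM48}, Remark~\ref{RM42}, or Definition~\ref{DEF514}, with no substantial further analytic work required.
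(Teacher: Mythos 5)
Your proposal is correct and follows essentially the same route as the paper: construct $m$ from $(E,\mu)$ by the same case analysis at the endpoints, invoke Theorem~\ref{THM71} for existence and uniqueness in part (1), and reduce part (2) to part (1) via $\tilde E=\bs(E)$, $\tilde\mu=\mu\circ\bs^{-1}$. The only cosmetic difference is in the uniqueness step, where the paper appeals to the fact that a time-changed Brownian motion is determined by its speed measure (via its Dirichlet form) rather than to the essential uniqueness of $m$; both are immediate consequences of Theorem~\ref{THM71}.
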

\begin{proof}
To prove the first assertion,  note that a time-changed Brownian motion is uniquely determined by its speed measure,  because so is its associated Dirichlet form.  Hence the uniqueness holds true by means of Theorem~\ref{THM71}.  It suffices to show the existence.  In view of the equivalence between SFH and quasidiffusion,  we only need to find a function $m$ as in \S\ref{SEC6} such that $E_m=E,  m|_E=2\mu$ and \text{(QK)} holds for $m$.  To accomplish this,  assume without loss of generality that $0\in (l,r)$ where $l$ and $r$ are endpoints of $E$.  Set
\begin{equation}\label{eq:78}
	m(x):=2\mu( (0,x]),\; x\in [0, r),\quad m(x):=-2\mu((x,0]),\; x\in [l,0).
\end{equation}
When $r<\infty$,  set
\[
\begin{aligned}
	&m(x)=\infty,  \;  x\geq r, \qquad\qquad  \text{if }r\notin E, \\
	&m(x)=2\mu((0,r]),  \;  x\geq r, \quad \text{if }r\in E. 
\end{aligned}
\]
When $l>-\infty$,  define
\[
\begin{aligned}
	&m(x)=-\infty,  \;  x< l,   \qquad\qquad  \text{if }l \notin E, \\
	&m(x)=-2\mu([l,0]),  \;  x< l, \quad\; \text{if }l\in E. 
\end{aligned}
\]
It is straightforward to verify that $m$ satisfies all these conditions.  Therefore the first assertion is concluded.  The second assertion can be obtained by applying the first one to $\tilde{E}:=\bs(E)$ and $\tilde{\mu}:=\mu\circ \bs^{-1}$.  That completes the proof.  
\end{proof}
\begin{remark}
The desirable function $m$ in this proof is unique (up to a constant) for a given pair $(E,\mu)$.  However if \text{(QK)} is not required,  then the uniqueness of $m$ does not hold.  To obtain $m$ uniquely without assuming \text{(QK)},  one need an additional function $k: \{l,r\}\cap E\rightarrow [0,\infty)$ to determine the killing ratio at $l\in E$ or $r\in E$.  More precisely,  let $m$ be defined as \eqref{eq:78} for $x\in [l,r)$.  When $r\in E$,  set
\[
	m(x):=2\mu((x,r]),\; x\in \left[r,  r+\frac{1}{2k(r)}\right),\quad m(x):=\infty,  \; x\geq r+\frac{1}{2k(r)}.
\]
When $r\notin E$ and $r<\infty$,  set $m(x):=\infty$ for $m\geq r$.  The definition of $m(x)$ for $x\leq l$ is analogical.  This $m$ leads to the unique quasidiffusion for the given triple $(E,\mu,k)$.  
\end{remark}

\section{Analytic characterization of skip-free Hunt processes}\label{SEC2}

 In this section we will turn to characterize SFHs by virtue of Dirichlet forms.  
As shown in Theorem~\ref{THM71},  an SFH is equivalent to a quasidiffusion up to a homeomorphism.  Particularly it is symmetric with respect to its speed measure,  and its associated Dirichlet form is regular; see \cite[Theorem~5.2.8]{CF12}.  In fact,  the speed measure is the unique symmetrizing measure of SFH (up to multiplicative constant) due to (SR) and \cite{YZ10}. 

From now on fix $E\in \overline{\mathscr K}$ ended by $l$ and $r$,  and $\partial$ is the ceremony as before. Write $[l,r]\setminus \overline{E}$ as \eqref{eq:51},  i.e. 
\begin{equation}\label{eq:50}
	[l,r]\setminus \overline{E}=\cup_{k\geq 1}(a_k,b_k),
\end{equation}
where $(a_k,b_k)$,  $k\geq 1$,  are disjoint open intervals.  
 Given a compact subset $K\subset E$,  a \emph{neighbour} of $K$ is an open set $U\subset E$ such that $K\subset U$.  A neighbour $U$ of $K$ is called a \emph{strong neighbour} if $a_k\in K$ (resp.  $b_k\in K$) implies $b_k\in U$ (resp.  $a_k\in U$) for each $k\geq 1$.  

Let $\mu$ be a fully supported Radon measure on $E$ and $(\sE,\sF)$ be a regular Dirichlet form on $L^2(E,\mu)$.  For $f\in L^2(E,\mu)$,  $\text{supp}[f]$ stands for the support of the measure $f\cdot \mu$.  The $1$-capacity with respect to $\mathscr E$ is denoted by $\text{Cap}$; see \cite[\S2.1]{FOT11}.  Recall that a $\mu$-measurable set $A\subset E$ is called \emph{invariant} if $T_t(1_Af)=1_A\cdot T_tf$,  $\mu$-a.e.  for any $f\in L^2(E,\mu)$ and $t>0$,  where $T_t$ is the $L^2$-semigroup corresponding to $(\sE,\sF)$.  In addition,  $(\sE,\sF)$ is called \emph{irreducible} if any invariant set $A$ satisfies either $\mu(A)=0$ or $\mu(E\setminus A)=0$.  

\begin{definition}\label{DEF51-2}
Let $(\sE,\sF)$ be a regular Dirichlet form on $L^2(E,\mu)$.  
\begin{itemize}
\item[(1)] $(\sE,\sF)$ is called \emph{strictly irreducible},  if it is irreducible and $\text{Cap}(\{l\})>0$ (resp.  $\text{Cap}(\{r\})>0$) whenever $l\in E$ (resp.  $r\in E$).  
\item[(2)] $(\sE,\sF)$ is called \emph{strongly local-like},  if $\sE(f,g)=0$ for any $f,g\in \sF$ such that $\text{supp}[f], \text{supp}[g]$ are compact and $g$ is constant on a strong neighbour of $\text{supp}[f]$.  
\end{itemize}
\end{definition}
\begin{remark}
Strongly local Dirichlet forms are always strongly local-like but not vice versa.   If $E$ is an interval,  then strong neighbours are just neighbours,  and particularly,  the strong local-like property coincides with the strong local property.  
\end{remark} 


Before stating the main result,  we need to prepare some notations.  
Let $\bs$ be a strictly increasing and continuous real valued function on $E$.  We extend $\bs$ to a strictly increasing and continuous function $\bar{\bs}$ on $[l,r]$ by letting
\[
	\bar{\bs}(x):=\bs(a_k)+\frac{\bs(b_k)-\bs(a_k)}{b_k-a_k}\cdot (x-a_k),\quad x\in (a_k,b_k), \;k\geq 1,
\]
and 
\[
\bar{\bs}(l):=\bs(l)=\lim_{x\downarrow l}\bs(x),\quad \bar{\bs}(r):=\bs(r)=\lim_{x\uparrow r}\bs(x).
\]	
Denote by $J:=\langle l,  r\rangle$ the interval ended by $l$ and $r$,  where $l\in J$ (resp.  $r\in J$) if and only if $l\in E$ (resp.  $r\in E$).  
Then the Lebesgue-Stietjes measure of $\bar{\bs}$, denoted by $\lambda_{\bar{\bs}}$ or $d\bar{\bs}$,  is Radon on $J$.  Further denote by $\lambda_\bs$ or $d\bs$ the restriction of $\lambda_{\bar{\bs}}$ to $E$.  
Given $f\in C(J)$,  $f\ll \bar{\bs}$ means that $f$ is absolutely continuous with respect to $\bar{\bs}$,  i.e.  there exists an absolutely continuous function $g$ on $(\bar{\bs}(l),\bar{\bs}(r))$ such that $f=g\circ \bar{\bs}$.  Meanwhile $df/d\bar{\bs}:=g'\circ \bar{\bs}$.   Put a family of continuous functions
\[
	H^1_{e,\bs}(E):=\left\{f|_{E}: f\in C(J),  f\ll \bar{\bs},  \frac{df}{d\bar{\bs}}\in L^2(J,  \lambda_{\bar{\bs}})\right\}.  
\]
In addition for $h=f|_E\in H^1_{e,\bs}(E)$, we make the convention
\[
\frac{dh}{d\bs}:=\frac{df}{d\bar{\bs}}\bigg|_E
\]
for convenience.  Note that every $h\in  H^1_{e,\bs}(E)$ admits a finite limit $h(j):=\lim_{x\rightarrow j}h(x)$ whenever $|\bs(j)|<\infty$ for $j=l$ or $r$ (even if $j\notin E$). 

\begin{theorem}\label{THM53}
Let $E\in \overline{\mathscr K}$ and $\mu$ be a fully supported Radon measure on $E$. 
The regular Dirichlet form $(\sE,\sF)$ on $L^2(E,\mu)$ is associated to an SFH $X$ on $E$,  if and only if it is strictly irreducible and strongly local-like.  Meanwhile  $(\sE,\sF)$ admits the representation
\begin{equation}\label{eq:51-3}
\begin{aligned}
	\sF=& \{f\in L^2(E,\mu)\cap  H^1_{e,\bs}(E): f(j)=0\\
	 &\qquad \qquad \qquad \text{whenever }j\notin E\text{ and }|\bs(j)|<\infty\text{ for }j=l\text{ or }r\},  \\
	 \sE(f,g)=&\frac{1}{2}\int_E \frac{df}{d\bs}\frac{dg}{d\bs}d\bs+\frac{1}{2}\sum_{k\geq 1}\frac{(f(b_k)-f(a_k))(g(b_k)-g(a_k))}{\bs(b_k)-\bs(a_k)},\quad f,g\in \sF,
\end{aligned}
\end{equation}
where $\bs$ is the scale function of $X$ taking $\mu$ to be the speed measure.  
\end{theorem}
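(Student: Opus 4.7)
The plan is to prove the two directions separately, with the explicit representation \eqref{eq:51-3} playing a central role in both. For the necessity direction, assume $(\sE,\sF)$ is associated to an SFH $X$ on $E$ with scale function $\bs$ and speed measure $\mu$. By Theorem~\ref{THM71} and Remark~\ref{RM42}, the process $\tilde X := \bs(X)$ is a quasidiffusion on $\tilde E := \bs(E)\in\mathscr K$ with speed measure $\tilde\mu := \mu\circ\bs^{-1}$, and hence identified in \S\ref{SEC711} with the time-changed Brownian motion $\widehat X$. The Dirichlet form of $\widehat X$ on $L^2(\tilde E,\tilde\mu)$ is already explicitly known, having integral-plus-gap-jump form. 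Pulling back through the homeomorphism $\bs:E\to\tilde E$ (an isometry $L^2(\tilde E,\tilde\mu)\cong L^2(E,\mu)$) yields \eqref{eq:51-3}.

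Once the representation is established, strict irreducibility and strong local-likeness follow directly. Irreducibility is a consequence of (SR) and the standard characterization via hitting probabilities. If $l\in E$, (SR) gives $\mathbf{P}_x(T_l<\infty)>0$ for all $x\in E$, so $\{l\}$ is non-polar and $\text{Cap}(\{l\})>0$; similarly for $r$. For strong local-likeness, take $f,g\in\sF$ with $\text{supp}[f]$ compact and $g$ constant on a strong neighbour $U\supset\text{supp}[f]$. The integral term vanishes because $df/d\bs=0$ off $\text{supp}[f]$ while $dg/d\bs=0$ on $U$. Each gap term also vanishes: if $a_k\in\text{supp}[f]$, the strong neighbour condition yields $b_k\in U$, and $a_k\in U$ too, so $g(a_k)=g(b_k)$; if neither $a_k$ nor $b_k$ lies in $\text{supp}[f]$, then $f(a_k)=f(b_k)=0$.

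For the sufficiency direction, suppose $(\sE,\sF)$ is strictly irreducible and strongly local-like with associated Hunt process $X$. Apply the Beurling-Deny decomposition $\sE = \sE^{(c)}+\sE^{(j)}+\sE^{(k)}$. The strongly local-like condition, tested against $g$ constant on a strong neighbour of $\text{supp}[f]$, forces the jumping measure $J$ to be supported on the countable set $\{(a_k,b_k),(b_k,a_k):k\geq 1\}$ (non-gap jumps would contribute, while gap crossings are permitted), and forces the killing measure to vanish on $E\setminus\{l,r\}$. The strongly local part $\sE^{(c)}$, being a one-dimensional strongly local form on the connected subintervals of $\overline{E}$ intersected with $E$, is represented by a scale function $\bs$ via $\frac{1}{2}\int (df/d\bs)(dg/d\bs)\,d\bs$ (strict irreducibility patches $\bs$ globally and rules out degeneracies at endpoints). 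Combining these pieces yields \eqref{eq:51-3}; then Corollary~\ref{COR64} identifies $X$ as the unique SFH with scale $\bs$ and speed $\mu$. The properties (SF), (SR), (SK) can then either be read off from the identification with the quasidiffusion (Theorem~\ref{THM71}) or verified directly: (SF) from the support of $J$, (SR) from strict irreducibility, and (SK) from the vanishing of $k$ inside.

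The principal obstacle lies in the sufficiency direction. First, rigorously extracting the scale function $\bs$ from $\sE^{(c)}$ on a set $E$ that is merely nearly closed (with possibly infinitely many gaps and isolated points) requires patching 1D strongly local theory across the gaps while accounting for endpoints in $E$; strict irreducibility is essential here to prevent the continuous part from decoupling along the gaps. Second, proving that $J$ is concentrated exactly on the discrete gap-pair set rather than on some larger support compatible with strong neighbours requires a careful choice of test functions: one must exploit that a strong neighbour only forces the gap partner to lie in $U$, not arbitrary nearby points, and this subtle distinction is what translates pathwise into the skip-free property in place of full continuity.
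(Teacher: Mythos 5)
Your necessity direction matches the paper's: pull back the known trace Dirichlet form of the time-changed Brownian motion through $\bs$, then read off strict irreducibility and the strongly local-like property from the representation \eqref{eq:51-3}. Your sufficiency direction also starts the same way (Beurling--Deny, killing measure forced to vanish, jumping measure forced onto the gap pairs $\{(a_k,b_k),(b_k,a_k)\}$, which are the paper's Lemmas~\ref{LM55} and \ref{LM56}). But from there you diverge, and the divergence contains a genuine gap.

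You propose to recover the representation \eqref{eq:51-3} analytically --- representing $\sE^{(c)}$ by a scale function via ``one-dimensional strongly local theory on the connected subintervals of $\overline{E}$ intersected with $E$'' and then invoking Corollary~\ref{COR64}. This step fails as stated. A nearly closed set $E$ may have empty interior (e.g.\ a fat Cantor set with $\mu$ the restricted Lebesgue measure), in which case the connected components of $\overline{E}$ are singletons and there are no nondegenerate subintervals on which to apply the one-dimensional theory; yet the strongly local part of \eqref{eq:51-3} is nontrivial there, since $\lambda_{\bs}(E)>0$. So the decomposition-by-components picture does not produce $\bs$, and no argument is offered for why $\sE^{(c)}$ on $E$ as a whole must take the form $\frac{1}{2}\int_E \frac{df}{d\bs}\frac{dg}{d\bs}\,d\bs$. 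A second unaddressed point is the coupling between the two parts: after Beurling--Deny the jump weights $J(\{(a_k,b_k)_2\})$ are a priori arbitrary positive numbers, and one must show they combine with $\sE^{(c)}$ into a \emph{single} strictly increasing continuous $\bs$ whose gap increments are $\bs(b_k)-\bs(a_k)=\big(2J(\{(a_k,b_k)_2\})\big)^{-1}$, with the domain $\sF$ coming out exactly as in \eqref{eq:51-3}. The paper avoids all of this: it never analyzes $\sE^{(c)}$ in the sufficiency direction. Instead it uses the L\'evy system together with Lemma~\ref{LM56} to get (SF) quasi-everywhere, then a capacity argument by contradiction (a point of zero capacity would have to be jumped over, violating skip-freeness) to get $\mathrm{Cap}(\{x\})>0$ for every $x\in E$, whence (SF) and (SR) hold everywhere and (SK) follows from $k=0$; the process is then an SFH by definition, and the representation is obtained \emph{afterwards} from the already-proved necessity direction. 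If you want to salvage your route you would need either the paper's pathwise argument or a genuine structure theorem for strongly local regular Dirichlet forms on arbitrary nearly closed subsets of $\bR$, which is precisely the hard content you have assumed rather than proved. (Minor additional point: Lemma~\ref{LM55} gives $k=0$ on all of $E$, including at $l,r\in E$, not merely on $E\setminus\{l,r\}$ as you state; the weaker statement would leave a killing term at the endpoints incompatible with \eqref{eq:51-3}.)
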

\begin{remark}
At a heuristic level,  the strict irreducibility of $(\sE,\sF)$ corresponds to (SR) of $X$,  and strongly local-like property corresponds to (SF) and (SK).  The condition $\text{Cap}(\{l\})>0$ for $l\in E$  (or $\text{Cap}(\{r\})>0$ for $r\in E$)  in the definition of strictly irreducibility is not essential to our discussions.  Without assuming it,  $(\sE,\sF)$ still corresponds to another Markov process very similar to SFH,  except for that $l$ may be not arrived at by the process.  This is how a $d$-Bessel process on $[0,\infty)$ ($d\geq 2$) acts at the left endpoint $0$.  More examples are referred to in,  e.g.,  \cite[Example~3.11]{L23}.  
\end{remark}

\subsection{Proof of necessity}

We first complete the easy part of the proof of Theorem~\ref{THM53}.  In this proof of necessity,   the representation \eqref{eq:51-3} of the Dirichlet form associated to an SFH is also obtained.   

\begin{proof}[Proof of necessity]
We first let $X$ be an SFH on $E$ associated to a Dirichlet form $(\sE,\sF)$ on $L^2(E,\mu)$.  Since the symmetrizing measure of $X$ is unique (up to multiplicative constant),  one may take $\mu$ to be its speed measure.  The corresponding scale function is denoted by $\bs$.  
Let $\widehat{X}$ be the time-changed Brownian motion on $\bs(E)$ with speed measure $\tilde{\mu}:=\mu\circ \bs^{-1}$ appearing in \S\ref{SEC41}.  The expression of its Dirichlet form $(\widehat{\sE},\widehat{\sF})$ is obtained in \cite[Theorems~3.4 and 3.6]{L23}.  Using this expression and noting Remark~\ref{RM42},  one  can easily obtain the representation \eqref{eq:51-3} of $(\sE,\sF)$.  Since $(\widehat{\sE},\widehat{\sF})$ is irreducible and each singleton is of positive capacity with respect to $\widehat{\sE}$ (see \cite[Theorem~3.4 and Corollary~3.7]{L23}),  it follows that analogical properties hold for $(\sE,\sF)$.  Particularly $(\sE,\sF)$ is strictly irreducible.  In addition,  in view of the representation \eqref{eq:51-3},  one can verify the strongly local-like property of $(\sE,\sF)$ straightforwardly.
\end{proof}

\subsection{Proof of sufficiency}

To the contrary,  let $(\sE,\sF)$ be a regular,  strictly irreducible and strongly local-like Dirichlet form on $L^2(E,\mu)$.  Every function $f\in \sF$ takes its $\sE$-quasi-continuous version tacitly.  Write the Beurling-Deny decomposition of $(\sE,\sF)$ as follows:
\[
\sE(f,f)=\sE^{(c)}(f,f)+\int_{E\times E\setminus \mathbf{d}}(f(x)-f(y))^2J(dxdy)+\int_E f(x)^2k(dx),\quad f\in \sF,
\]
where $\sE^{(c)}$ is the strongly local part,  $\mathbf{d}$ is the diagonal of $E\times E$, $J$ is the jumping measure and $k$ is the killing measure.  The lemme below states that the strongly local-like property leads to the vanishing of killing part.   Particularly,  (SK) holds for the associated Hunt process of $(\sE,\sF)$.

\begin{lemma}\label{LM55}
 $k=0$.
\end{lemma}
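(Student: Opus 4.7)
The plan is to show that $\int_E f\,dk=0$ for every nonnegative $f\in\sF\cap C_c(E)$; from this, regularity yields $k=0$ at once, since for any compact $C\subset E$ there is a cutoff $f_C\in\sF\cap C_c(E)$ with $f_C\geq 1_C$, whence $k(C)\leq\int f_C\,dk=0$. The strategy for the key identity is to pair $f$ with a test function $g\in\sF\cap C_c(E)$ satisfying $g\equiv 1$ on a \emph{strong} neighbour of $\text{supp}[f]$, apply the strongly local-like hypothesis to obtain $\sE(f,g)=0$, and then read off the vanishing of the killing term from the Beurling--Deny decomposition by verifying that each of the three summands is separately nonnegative.

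First I would enlarge $K:=\text{supp}[f]$ to a ``gap-closed'' compact set
\[
K':=K\cup\{b_k:a_k\in K\}\cup\{a_k:b_k\in K\}.
\]
Setting $\alpha:=\min K$ and $\beta:=\max K$, any gap $(a_k,b_k)$ with an endpoint in $K$ must lie inside $[\alpha,\beta]$, for otherwise $\alpha$ or $\beta\in K\subset E\subset\overline{E}$ would lie in the open gap, contradicting $(a_k,b_k)\cap\overline{E}=\emptyset$. Compactness of $K'\subset[\alpha,\beta]\cap E$ then follows because in a bounded interval only finitely many pairwise disjoint gaps can have length bounded below, which controls any limit of adjoined endpoints. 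By regularity of $(\sE,\sF)$ I next produce $g\in\sF\cap C_c(E)$ with $0\leq g\leq 1$ and $g\equiv 1$ on some open neighbourhood $U$ of $K'$ (standard cutoff). By the very construction of $K'$, this $U$ is a strong neighbour of $K$, so the strongly local-like property gives $\sE(f,g)=0$.

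Writing the Beurling--Deny decomposition
\[
\sE(f,g)=\sE^{(c)}(f,g)+\int_{E\times E\setminus\mathbf d}(f(x)-f(y))(g(x)-g(y))\,J(dxdy)+\int_E fg\,dk,
\]
I would verify that each term is nonnegative. The strongly local part $\sE^{(c)}(f,g)=0$ because $g$ is constant on a neighbourhood of $K$, which is the defining vanishing property of $\sE^{(c)}$. For the jump integrand: on $U\times U$ it vanishes since $g(x)=g(y)=1$, on $U^c\times U^c$ it vanishes since $f(x)=f(y)=0$, and on the mixed regions one of $f(x),f(y)$ is zero while the factor $1-g(\cdot)\geq 0$, forcing nonnegativity. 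The killing term $\int fg\,dk$ is $\geq 0$ trivially. Hence each summand vanishes separately, and since $fg=f$ on all of $E$ we obtain $\int_E f\,dk=0$, as desired.

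The main obstacle is the enlargement $K\rightsquigarrow K'$ and the verification that $U$ is genuinely a \emph{strong} neighbour of $K$; without this step a generic cutoff from regularity could fail to equal $1$ at the partner endpoints of the gaps bordering $K$, in which case the strongly local-like hypothesis---which is strictly weaker than genuine strong locality---would not apply and the argument would stall at the jump term. Everything else (nonnegativity of the three Beurling--Deny pieces, and the passage from $\int f\,dk=0$ for all nonnegative $f\in\sF\cap C_c(E)$ to $k=0$) is routine.
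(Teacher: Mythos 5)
Your proof is correct and follows essentially the same route as the paper's: pair a nonnegative $f\in\sF\cap C_c(E)$ with a cutoff equal to $1$ on a strong neighbour of $\text{supp}[f]$ (the paper realizes this via $[c',d']\cap E$ rather than your gap-closure $K'$, but the idea is identical), apply the strongly local-like property, and read off $\int_E f\,dk=0$ from the nonnegativity of each Beurling--Deny term. The only quibble is your claim that every gap with an endpoint in $K$ lies inside $[\alpha,\beta]$ --- a gap with $a_k=\beta$ or $b_k=\alpha$ protrudes past $K$ --- but this does not affect the compactness of $K'$ or the argument.
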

\begin{proof}
Fix $u\in \sF\cap C_c(E)$ with $u\geq 0$.  Put $c:=\inf\{x: x\in \text{supp}[u]\}$ and $d:=\sup\{x:x\in \text{supp}[u]\}$.  Clearly $c,d\in E$.  Take $E\ni c'\leq c$ such that $c'<c$ if $c\neq l$ and  $c'<a_k$ if $c=b_k$ and $a_k>l$ for some $k$.  Analogously take $E\ni d'\geq d$ such that $d'>d$ if $d\neq r$ and $d'>b_k$ if $d=a_k$ and $b_k<r$ for some $k$.  Using the regularity of $(\sE,\sF)$  one may find a function $v\in \sF\cap C_c(E)$ such that $0\leq v\leq 1$ and $v\equiv 1$ on $[c',d']\cap E$.  It is straightforward to verify that $v$ is constant on a strong neighbour of $\text{supp}[u]$.  Hence $\sE(u,v)=0$.  Since $\sE^{(c)}(u,v)=0$,  it follows that
\[
\begin{aligned}
	0&=\int_{(K\times K^c) \cup (K^c\times K)}(u(x)-u(y))(v(x)-v(y))J(dxdy)+\int_E u(x)k(dx) \\
	&=2\int_{K\times K^c}u(x)(1-v(y))J(dxdy)+\int_E u(x)k(dx),
\end{aligned}\]
where $K:=\{x\in E: v(x)=1\}\supset [c',d']\cap E\supset \text{supp}[u]$.  Note that $u\geq 0$ and $0\leq v\leq 1$.  The above identity yields that $\int_E u(x)k(dx)=0$ for any $u\in \sF\cap C_c(E)$ with $u\geq 0$.  Eventually we can conclude $k=0$.  That completes the proof.  
\end{proof}

Another lemma below treats the jumping part of $(\sE,\sF)$.  To avoid ambiguity, $(x,y)_2$ for $x,y\in E$ with $x\neq y$ stands for the point in $E\times E\setminus \mathbf{d}$.  

\begin{lemma}\label{LM56}
$F:=\{(a_k,b_k)_2,  (b_k,a_k)_2: k\geq 1\}$ is a closed subset of $E\times E \setminus \mathbf{d}$ and $J$ is supported on $F$,  i.e.  $J(G)=0$ for $G:=\left(E\times E\setminus \mathbf{d}\right)\setminus F$.  
\end{lemma}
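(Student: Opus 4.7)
The plan is to tackle the two assertions separately: closedness of $F$, and then the support claim $J(G)=0$ for $G:=(E\times E\setminus \mathbf{d})\setminus F$.

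For closedness, suppose $(x_n,y_n)_2\in F$ converges to $(x,y)_2\in E\times E\setminus \mathbf{d}$. After extracting a subsequence, I may assume $(x_n,y_n)=(a_{k_n},b_{k_n})$ (the mirror case is symmetric). If the indices $k_n$ take only finitely many values, a constant subsequence already gives $(x,y)\in F$. Otherwise the open intervals $\{(a_{k_n},b_{k_n})\}$ are pairwise disjoint; but since $a_{k_n}\to x$ and $b_{k_n}\to y$ with (WLOG) $x<y$, the midpoint $(x+y)/2$ eventually lies in every $(a_{k_n},b_{k_n})$, contradicting disjointness.

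For $J(G)=0$ the strategy is to show every point of $G$ has a rectangular neighbourhood of vanishing $J$-measure; second-countability of $E\times E\setminus \mathbf{d}$ then yields the claim by a countable covering. Fix $(x_0,y_0)\in G$ with $x_0<y_0$ WLOG and set $K_1:=E\cap[x_0-\delta,x_0+\delta]$, $K_2:=E\cap[y_0-\delta,y_0+\delta]$. The key geometric observation is that for $\delta$ sufficiently small there is no gap $(a_k,b_k)$ with $a_k\in K_1$ and $b_k\in K_2$ (nor with the roles of $a_k,b_k$ swapped). Indeed, were this to fail along a sequence $\delta_n\downarrow 0$, each offending gap would contain $(x_0+y_0)/2$ for $\delta_n$ small, so by disjointness they would all coincide with a single $(a_{k^\ast},b_{k^\ast})$; letting $\delta_n\to 0$ would then pin $a_{k^\ast}=x_0$ and $b_{k^\ast}=y_0$, putting $(x_0,y_0)$ in $F$, a contradiction. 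A similar disjointness-of-gaps argument shows
\[
S:=K_1\cup\{b_k:a_k\in K_1\}\cup\{a_k:b_k\in K_1\}
\]
is closed in $E$ and, by the previous paragraph, disjoint from $K_2$; by normality of the metric space $E$, I pick an open $U_1\supset S$ with $\overline{U_1}\cap K_2=\emptyset$.

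Regularity of $(\sE,\sF)$ (cf.\ \cite[Lemma~1.4.2]{FOT11}) then yields $f,g\in \sF\cap C_c(E)$ with $0\leq f,g\leq 1$, $f\equiv 1$ on $K_1$ and $\text{supp}[f]\subset U_1$, $g\equiv 1$ on $K_2$ and $\text{supp}[g]\subset E\setminus \overline{U_1}$. By construction $U_1$ is an open strong neighbour of $\text{supp}[f]$ on which $g\equiv 0$, so the strongly local-like property yields $\sE(f,g)=0$. Disjointness of supports annihilates $\sE^{(c)}(f,g)$ and Lemma~\ref{LM55} annihilates the killing part, so the Beurling-Deny decomposition reduces to
\[
\int_{E\times E\setminus \mathbf{d}}(f(x)-f(y))(g(x)-g(y))\,J(dxdy)=0.
\]
Since $f$ vanishes on $E\setminus U_1$ and $g$ vanishes on $U_1$, a case-by-case check (partitioning by whether $x,y$ lie in $U_1$, in $\overline{U_1}\setminus U_1$, or in $E\setminus\overline{U_1}$) shows the integrand is non-positive everywhere and equals $-1$ on $(K_1\times K_2)\cup(K_2\times K_1)$; hence $J(K_1\times K_2)=0$. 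As $(x_0,y_0)$ lies in the $E\times E$-interior of $K_1\times K_2$, a countable covering of $G$ by such rectangles gives $J(G)=0$.

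The main obstacle is the geometric step: proving that $S$ is closed (the set $\{b_k:a_k\in K_1\}$ can a priori accumulate) and that $K_1,K_2$ admit a strong-neighbour separation both rest delicately on the disjointness of the countable family $\{(a_k,b_k)\}_k$ together with the hypothesis $(x_0,y_0)\notin F$ ruling out the pathological ``spanning gap''. Producing the cutoffs $f,g$ in $\sF\cap C_c(E)$ is routine regularity but must be invoked carefully.
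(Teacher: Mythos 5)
Your overall strategy is sound and genuinely different from the paper's: the paper proves $J(G)=0$ by building an algebra of product functions $\sum u_i(x)v_i(y)$ subject to support and gap-endpoint constraints, checking it separates points of $G$, and invoking Stone--Weierstrass, whereas you localize and kill $J$ on a countable cover of rectangles. Your closedness argument via the midpoint and disjointness of the gaps is correct (and cleaner than the paper's monotone-subsequence argument), the sign computation $(f(x)-f(y))(g(x)-g(y))=-f(x)g(y)-f(y)g(x)\le 0$ is right, and the covering step is fine.

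However, there is a genuine gap at the one place where the strongly local-like property is actually invoked. You build $S$ by adjoining to $K_1$ the partners of gap endpoints lying in $K_1$, choose an open $U_1\supset S$ with $\overline{U_1}\cap K_2=\emptyset$, and only then produce $f$ with $f\equiv 1$ on $K_1$ and $\text{supp}[f]\subset U_1$. The claim ``by construction $U_1$ is an open strong neighbour of $\text{supp}[f]$'' does not follow: $\text{supp}[f]$ is in general strictly larger than $K_1$ and may contain a gap endpoint $a_j\in U_1\setminus K_1$ whose partner $b_j$ lies outside $U_1$ (possibly even inside $K_2$, since your geometric lemma only excludes gaps spanning from $K_1$ to $K_2$, not from $U_1\setminus K_1$ to $K_2$). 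In that case $U_1$ fails the strong-neighbour condition for $\text{supp}[f]$, $g$ need not be constant on any strong neighbour of $\text{supp}[f]$, and $\sE(f,g)=0$ cannot be concluded --- which is the crux of the whole lemma. The repair is straightforward but must be made explicit: first fix a compact $K_1':=E\cap[x_0-2\delta,x_0+2\delta]$ and produce $f$ with $f\equiv 1$ on $K_1$ and $\text{supp}[f]\subset K_1'$; then form the partner-closure $S'$ of $K_1'$ (your disjointness argument shows $S'$ is compact and, for $\delta$ small, disjoint from $K_2$); any open $V\supset S'$ is then automatically a strong neighbour of $\text{supp}[f]$, and one chooses $g$ supported in $E\setminus\overline{V}$. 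With that reordering of quantifiers your proof goes through.
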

\begin{proof}
For simplification we still use $(x,y)$ to represent $(x,y)_2$ in this proof.  
We first prove that $F_1:=\{(a_k,b_k):k\geq 1\}$ is closed in $E\times E \setminus \mathbf{d}$.  Analogously $F_2:=\{(b_k,a_k):k\geq 1\}$ is also closed and hence $F=F_1\cup F_2$ is closed.  To do this,  argue by contradiction and suppose $(a_k,b_k)\rightarrow (x,y)\notin F_1$ for $x,y\in E$ with $x\neq  y$ as $k\rightarrow \infty$.  Particularly,  $a_k\rightarrow x,  b_k\rightarrow y$ and on account of $a_k<b_k$,  one gets $x<y$.  Taking a subsequence if necessary,  one may assume that $a_k$ is decreasing.  Then $a_k\geq b_{k+1}$.  As a result,  $x=\lim_{k\rightarrow \infty}a_k\geq \lim_{k\rightarrow \infty} b_{k+1}=y$,  as leads to a contradiction with $x<y$.  

Now put
\[
\begin{aligned}
	\mathscr C&:=\bigg\{\sum_{i=1}^p u_i(x)v_i(y): u_i,v_i\in \sF\cap C_c(E), \text{supp}[u_i]\cap \text{supp}[v_i]=\emptyset,  \\
	&\qquad\qquad \qquad u_i(a_k)v_i(b_k)=u_i(b_k)v_i(a_k)=0,\forall  k\geq 1,  1\leq i\leq p, p\geq 1\bigg\}. 
\end{aligned}\]
Clearly $\mathscr C\subset C_c(G)$ is an algebra.  Let us verify that $\mathscr C$ separates points of $G$.  To accomplish this fix $(x_1,y_1), (x_2,y_2)\in G$ with $x_1< x_2$.  We only consider the case $x_1<y_1$,  and the other case $x_1>y_1$ can be treated similarly.  When there are no intervals in \eqref{eq:50} between $x_1$ and $x_2\wedge y_1$,  we can take $\gamma,  \delta$ with $x_1<\gamma<\delta<x_2\wedge y_1$ and $u,v\in \sF\cap C_c(E)$ such that
\[
\begin{aligned}
	&u(x_1)=1,\quad u(z)=0,\;\forall z> \gamma,  \\
	&v(y_1)=1,\quad v(z)=0, \forall z<\delta.  
\end{aligned}
\]
Then $(x,y)\mapsto u(x)v(y)\in \mathscr C$ with $u(x_1)v(y_1)=1$ and $u(x_2)v(y_2)=0$.  When $x_1\leq a_k<b_k\leq x_2\wedge y_1$ for some $k$,  we have $y_1>b_k$ once $x_1=a_k$ because of $(x_1,y_1)\notin F$.  Hence one can take $u,v\in \sF\cap C_c(E)$ such that
\[
\begin{aligned}
&u(x_1)=1,\; u(z)=0,\;\forall z\geq b_k, \text{ and }u(a_k)=0\text{ whenever }x_1<a_k;  \\
	&v(y_1)=1, \; v(z)=0, \forall z\leq a_k, \text{ and }v(b_k)=0\text{ whenever }x_1=a_k.  
\end{aligned}
\]
It still holds that $(x,y)\mapsto u(x)v(y)\in \mathscr C$ with $u(x_1)v(y_1)=1$ and $u(x_2)v(y_2)=0$.  Eventually we can obtain that $\mathscr C$ separates points of $G$.  By the above argument one also finds that for any $(x,y)\in G$,  there exists $f\in \mathscr C$ such that $f(x,y)\neq 0$.  Therefore the Stone-Weierstrass theorem yields that $\mathscr C$ is dense in $C_c(G)$.  

Take $u,v\in \sF\cap C_c(E)$ with $\text{supp}[u]\cap \text{supp}[v]=\emptyset$ and $u(a_k)v(b_k)=u(b_k)v(a_k)=0$ for any $k\geq 1$.  The strongly local-like property implies that 
\[
	0=\sE(u,v)=-2\int_{E\times E\setminus \mathbf{d}} u(x)v(y)J(dxdy).  
\]
Then a standard argument yields that $\int_{E\times E\setminus \mathbf{d}} f(x,y)J(dxdy)=0$ for any $f\in C_c(G)$.  Therefore $J(G)=0$.  That completes the proof.  
\end{proof}

Denote by $X$ the associated Hunt process of $(\sE,\sF)$.  We are to prove that $X$ is an SFH.  

\begin{proof}[Proof of sufficiency]
We first prove that (SF) holds in the sense of $\sE$-q.e.,  i.e.  for $\sE$-q.e.  $x\in E$,  $\mathbf{P}_x$-a.s.,  
\begin{equation}\label{eq:53-2}
	(X_{t-}\wedge X_t,  X_{t-}\vee X_t)\cap E=\emptyset,\quad \forall t>0.  
\end{equation}
Clearly \eqref{eq:53-2} amounts to that the interval $(X_{t-},X_t)$ must be $(a_k,b_k)$ or $(b_k,a_k)$ for some $k$.  To accomplish this,  denote by $(N,H)$ the L\'evy system of $X$ (see,  e.g.,  \cite[Definition~A.3.7]{FOT11}).  Take arbitrary non-negative Borel measurable function $f$ on $E\times E$ with $f|_{\mathbf{d}\cup F}=0$,  where $F$ appears in Lemma~\ref{LM56}.  We have for any $x\in E$ and $T>0$ (recall that $k=0$),
\[
	\mathbf{E}_x\sum_{0<t\leq T} f(X_{t-},X_t)=\mathbf{E}_x\int_0^T Nf(X_t)dH_t,
\]
where $Nf(X_t)=\int_E f(X_t, y)dH_t$.  Denote by $\alpha_T(x)$ the right hand side of this identity.  It follows from \cite[Theorem~5.1.3]{FOT11} that for any non-negative Borel function $h$ on $E$,  
\[
	\int_E \alpha_T(x)h(x)\mu(dx)=\int_0^T \langle Nf \cdot \nu_H,  P_th\rangle dt,
\]
where $\nu_H$ is the Revuz measure of $H$ and $P_t$ is the semigroup of $(\sE,\sF)$.  Lemma~\ref{LM56} yields
\[
	Nf(x)\nu_H(dx)=\int_{y\in E} f(x,y)N(x,dy)\nu_H(dx)=\int_{y\in E} f(x,y)J(dxdy)\equiv 0.  
\]
Hence
\[
\int_E \lim_{T\uparrow \infty}\alpha_T(x)h(x)\mu(dx)=\lim_{T\uparrow \infty} \int_E \alpha_T(x)h(x)\mu(dx)=0.  
\]
Particularly,  
\begin{equation}\label{eq:54-2}
	\mathbf{E}_x \int_0^\infty Nf(X_t)dH_t=0,\quad \mu\text{-a.e.  } x\in E.  
\end{equation}
One may easily verify that \eqref{eq:54-2} amounts to 
\[
	\mathbf{E}_x \int_0^\infty e^{-t} Nf(X_t)dH_t=0,\quad \mu\text{-a.e.  } x\in E.  
\]
In view of \cite[Lemma~1.5.3]{FOT11},  $x\mapsto \mathbf{E}_x \int_0^\infty e^{-t} Nf(X_t)dH_t$ is $\sE$-quasi-continuous.  As a result,  
\[
\mathbf{E}_x\sum_{t>0} f(X_{t-},X_t)=\mathbf{E}_x\int_0^\infty Nf(X_t)dH_t=0
\]
for $\sE$-q.e. $x\in E$.  This readily arrives at \eqref{eq:53-2}.  

Next we prove that $\text{Cap}(\{x\})>0$ for any $x\in E$.  Due to the strictly irreducibility we already know that $l$ or $r$ is of positive capacity whenever $l\in E$ or $r\in E$.  Argue by contradiction and suppose $\text{Cap}(\{x\})=0$ for some $x\in (l,r)\cap E$.  Note that $\mu([l,x)\cap E),  \mu((x,r]\cap E)>0$.  Set $B:=(x,r]\cap E$.  On account of $\text{Cap}([l,x)\cap E)>0,  \text{Cap}(B)>0$,  \cite[Theorem~4.7.1]{FOT11} and \eqref{eq:53-2},  there exists $a\in [l,x)\cap E$ such that 
\[
	\mathbf{P}_a(T_B<\infty)>0,\quad \mathbf{P}_a(T_x<\infty)=0,
\]
where $T_B:=\inf\{t>0: X_t\in B\}$ and $T_x:=\{t>0: X_t=x\}$, 
and \eqref{eq:53-2} holds for $\mathbf{P}_a$-a.s.  Note that 
\[
	\widehat{T}_{x}=\inf\{t>0: X_{t-}=x\}\geq T_{x}=\infty,\quad \mathbf{P}_a\text{-a.s.};
\]
see,  e.g.,  \cite[Theorem~A.2.3]{FOT11}.  Hence for $\mathbf{P}_a$-a.s.,  $X_t\neq x$ and $X_{t-}\neq x$ for any $t\geq 0$.  For $\mathbf{P}_a$-a.s. $\omega$ with $t_\omega:=T_B(\omega)<\infty$,  it holds that $X_{t_\omega}(\omega)>x$ by the right continuity of $t\mapsto X_t(\omega)$ and $X_t(\omega)\neq x$ for any $t\geq 0$.  Similarly $X_{t_\omega-}(\omega)<x$.  Consequently $x\in (X_{t_\omega-}(\omega), X_{t_\omega}(\omega))$ for $\mathbf{P}_a$-a.s.  $\omega$ with $T_B(\omega)<\infty$.  This leads to a contradiction with \eqref{eq:53} for $\mathbf{P}_a$-a.s.  $\omega$.  

The arguments in the above two steps lead to (SF) for $X$.  The property (SR) for $X$ is a consequence of $\text{Cap}(\{x\})>0$ for any $x\in E$ and \cite[Theorem~4.7.1]{FOT11},  and (SK) is implied by Lemma~\ref{LM55}.  Eventually we can conclude that $X$ is an SFH.  That completes the proof. 
\end{proof}

\section*{Acknowledgment}

The idea of the proof of Lemma~\ref{LM54}~(6) was inspired by the discussion with a PhD student Dongjian Qian.  The author would like to thank him for this helpful discussion. 


\bibliographystyle{siam} 
\bibliography{ExtQua} 

\end{document}